\DeclareMathOperator{\aut}{Aut}
\DeclareMathOperator{\reg}{reg}
\DeclareMathOperator{\cl}{Cl}
\theoremstyle{plain}
\newtheorem{theorem}{Theorem}[section]
\newtheorem{claim}[theorem]{Claim}
\newtheorem{corollary}[theorem]{Corollary}
\newtheorem{fact}[theorem]{Fact}
\newtheorem{lemma}[theorem]{Lemma}
\newtheorem{proposition}[theorem]{Proposition}
\newtheorem{summary}[theorem]{Summary}
\theoremstyle{definition}
\newtheorem{definition}[theorem]{Definition}
\theoremstyle{remark}
 \newtheorem{remark}[theorem]{Remark}
\newcommand{\Rr}{{\mathds{R}}}
\newcommand{\x}{\bar x}
\renewcommand{\phi}{\varphi}
\renewcommand{\x}{\bar x}
\long\def\symbolfootnote[#1]#2{\begingroup%
\def\thefootnote{\fnsymbol{footnote}}\footnote[#1]{#2}\endgroup}
\def\Ind#1#2{#1\setbox0=\hbox{$#1x$}\kern\wd0\hbox to 0pt{\hss$#1\mid$\hss}
\lower.9\ht0\hbox to 0pt{\hss$#1\smile$\hss}\kern\wd0}
\def\Notind#1#2{#1\setbox0=\hbox{$#1x$}\kern\wd0\hbox to 0pt{\mathchardef
\nn=12854\hss$#1\nn$\kern1.4\wd0\hss}\hbox to
0pt{\hss$#1\mid$\hss}\lower.9\ht0 \hbox to
0pt{\hss$#1\smile$\hss}\kern\wd0}
\def\dim{{\rm dim}}
\def\gap{{\rm gap}}
\def\reg{{\rm reg}}
\def\cl{{\rm cl}}
\def\sq{{\rm Sq}}
\def\Rr{\mathbb R}
\def\Rc{\mathcal R}
\def\EE{\langle E_1,\dots, E_n\rangle}
\def\EEp{\langle E_1',\dots, E_n'\rangle}
\def\EEn-1{\langle E_1,\dots, E_{n-1}\rangle}
\keywords{o-minimality, ordered groups}
\subjclass[2010]{03C64(primary) 06F15 (secondary) }
\begin{document}

\title{Decomposable Ordered Groups}

\author{Eliana Barriga}
\address{Eliand Barriga  \\
Departamento de Matem\'aticas \\
Universidad de los Andes \\
Cra 1 No. 18A-10, Edificio H \\
Bogot\'a, 111711 \\
Colombia}

\email{el.barriga44@uniandes.edu.co}

\author{Alf Onshuus}
\address{Alf Onshuus \\
Departamento de Matem\'aticas \\
Universidad de los Andes \\
Cra 1 No. 18A-10, Edificio H \\
Bogot\'a, 111711 \\
Colombia}

\email{aonshuus@uniandes.edu.co}
\urladdr{http://matematicas.uniandes.edu.co/aonshuus}

\author{Charles Steinhorn}
\address{ Charles Steinhorn\\ Vassar College \\Poughkeepsie, NY
USA}
\email{steinhorn@vassar.edu}

\thanks{The third author was partially supported by NSF grant DMS-0801256.}

\maketitle


\section{Introduction and Preliminaries}

Definable groups in o-minimal structures have been studied
by many authors for well over twenty years. This context includes all real algebraic groups,
complex algebraic groups, and compact real Lie groups. One of the most striking results in this
area (see \cite{HPP}) reveals a tight correspondence between
definably compact groups in o-minimal expansions of real closed fields and compact real Lie groups.
In \cite{OnSt} the second and third authors generalized the concept of o-minimality
to include `higher dimensional' ordered structures.

It was soon apparent that the results in \cite{OnSt} have consequences
for ordered groups in this new context that would imply structure results for ordered real Lie groups that might not be known to researchers in that area. This paper, which we outline in the next paragraphs, develops these ideas and in particular undertakes an analysis of `low-dimensional' ordered groups definable in a sufficiently rich o-minimal expansion of a real closed field.


In Section \ref{section2} we work
in the general framework introduced in \cite{OnSt}, {\em decomposable ordered structures\/} (Definition~\ref{amenable}), under the additional assumption that a group operation is defined on the structure. We call such a structure a ``decomposable ordered group."\footnote{It should be pointed out is not to be confused with the use of the term decomposable group that has long appeared in the literature (see~\cite{Fuchs}, e.g.). Since our terminology appears only in Section~\ref{section2}, we ask the reader to forgive any possible confusion.} The main result at this
level of generality is Theorem~\ref{decomposable}, which asserts that any such group is supersolvable, and that
topologically it is homeomorphic to the product of o-minimal groups.

To proceed further to classify ordered groups in this context, we must work out the possible group extensions of a triple of a group, module, and action: while we can prove the existence of a convex normal o-minimal subgroup (Theorem \ref{o-minimal subgroup}),
in order to use this inductively to determine the structure of the given ordered group, we need to understand possible extensions. This forces us to restrict our focus beginning in Section~\ref{section3} to ordered groups that are definable in o-minimal structures, and to develop the connection between group extensions and the second cohomology group of the o-minimal group cohomology (Theorem \ref{2-Cohomology}).

With this material in hand, we make two additional assumptions beginning in Sections~\ref{subsection3.2} and~\ref{dimension2} to make our analysis feasible. The first is that we work in an o-minimal field
$\Rc$ in which all o-minimal definable subgroups of $\Rc$ are definably isomorphic.
While this may seem at first to be a very strict assumption, as discussed in~\ref{subsection3.2} it is quite natural in our context.
We make our second additional assumption in Section~\ref{dimension2}, namely that in
$\mathcal{R}$ every abelian definable extension of $(R,+)$ by $(R,+)$ splits. The issue of whether such an extension splits is delicate; whereas it is not difficult to show this for a nonabelian extension, in general we do not know if it splits in the abelian case (see \cite{PS05}, e.g.)

For our analysis of ordered groups of dimension~2 and~3 in our setting, we first determine these groups modulo definable group isomorphism. This is undertaken in Section~\ref{dimension2} for dimension~2 groups, culminating in Theorem~\ref{T:1.3}. For the dimension~3 analysis, facts about group cohomology employing spectral sequence methods come into play; these appear in the first author's M.Sc. thesis \cite{Ba} written under the supervision of the second author and are presented here in Section~\ref{spectral}. The dimension~3 analysis, carried out in Section~\ref{dimension3}, divides into four cases, each treated in a separate subsection. The analysis in Sections~\ref{dimension2}-\ref{dimension3} is extended in Section~\ref{sectionOrdered}
to determine the groups up to definable {\em ordered} group isomorphism.

The principal results obtained in this paper are collected in Section~\ref{Summary} in Summaries~\ref{T:3*}-\ref{3-dimensional}.
All the results in this paper apply to ordered groups that are definable
in the Pfaffian closure of the real field, and all the groups that appear
in our classification are real Lie groups. The connections with real Lie groups is discussed in Section~\ref{Summary} as well.

For a reader not interested in o-minimal structures per se---or, for that matter, our
generalization---the natural question is to what extent our work gives a complete characterization of
ordered real Lie groups under the assumption that the order is itself smooth (or analytic).
First, the context in which we work \emph{does} include all real semi-algebraic ordered Lie groups (where the order is also defined by semi-algebraic equations). In the general case, it is known that any compact Lie group is isomorphic
to a real algebraic subgroup of $GL(n, \mathbb R)$ (see, \cite{Kn}, e.g.), so is definable in the real field
and thus analyzable by o-minimal technology. While it also is
not difficult to find examples of Lie groups (both over the complex and real fields) that are not definable in
an o-minimal structure, the ordered group context rules out all the examples known to us: to find an
ordered real Lie group that does not fall under our assumptions it would be necessary to find a torsion free group that is diffeomorphic to
$\mathbb R^n$ with both the order relation and the group operation given by analytic functions, but which is not definable in the Pfaffian closure of the real field.

With our synopsis of the paper complete, we conclude this section by recording several definitions and results from \cite{OnSt} that we require.

\begin{definition}
Given a structure $\mathcal C$, we say that a pair $(X,<_X)$ consisting of a definable subset $X$ and
a definable linear order $<_X$ is \emph{o-minimal} if every $\mathcal C$-definable subset of $X$
is a finite union of (open) $<_X$-intervals and points.
\end{definition}

\begin{definition}\label{amenable}

A (definable) linearly ordered set  $(X,<)$ in an ambient structure $\mathcal C$ is \emph{decomposable}
if there are ($\mathcal C$-) definable equivalence relations $E_1,\dots, E_n$ such
that
\begin{itemize}
\item[i.] $E_1$ is an equivalence relation on $X$ with finitely
many classes;

\item[ii.]  For $i=1,\ldots ,n-1$, we have that $E_{i+1}$ is an
equivalence relation on $X$ refining $E_i$;

\item[iii]  For each $i<n$ there is a definable linear order $<_i$
on the set of $E_{i+1}$-classes such that for all $w\in X$ the set

\[\left\{ [x]_{E_{i+1}} \left|\right. E_{i}(x,w) \right\}\]
ordered by $<_i$ is a dense o-minimal set.

\item [iv.] For all $w\in X$, the set $[x]_{E_{n}}$ ordered by $<$
(the order inherited from $X$) is dense o-minimal.
\end{itemize}

If such a sequence of equivalence relations exist, we say that the
sequence $\langle E_1, \dots, E_n\rangle$ is a \emph{presentation}
of $X$. The empty sequence is a presentation of a finite set.

\bigskip

If $\EE$ is a presentation of a decomposable set $(X,<)$, we
define the \emph{depth of the presentation} to be $n$, the number of
equivalence relations in the presentation.
\end{definition}

Some presentations are particularly easy to work
with:

\begin{definition}\label{def:minimal}

Let $(X,<)$ be a linearly ordered set in an ambient structure $\mathcal C$.

\begin{itemize}
\item[i.] $(X,<)$ is a \emph{0-minimal set} (has a \emph{minimal presentation
of depth $0$}) if it is a singleton and its presentation is the empty
tuple.

\item[ii.] $(X,<)$ is an \emph{$(n+1)$-minimal set} (has a \emph{minimal presentation
of depth $(n+1)$}) if there is a family
$\{I_b\}_{b\in B}$ of pairwise disjoint \emph{infinite} dense
o-minimal sets such that $X=\bigcup_{b\in B} I_b$ and there is a
definable order $<_B$ so that $(B,<_B)$ is an $n$-minimal set.
\end{itemize}
\end{definition}


If a set has a minimal presentation, its dimension can be
easily defined, as we now observe. We note that it is not difficult to show that the definition
coincides with topological dimension for definable sets in o-minimal theories.

\begin{definition}\label{lexdimension}
Let $(X,<)$ be decomposable in an ambient structure $\mathcal C$. A \emph{minimal
decomposition of $X$} is a partition of $X$ into finitely many
minimal sets.

We say that $X$ has  \emph{dimension $n$}, denoted by $\dim(X)=n$,
if $X$ has a minimal decomposition into minimal sets whose maximal
depth is $n$ and at least one of the minimal sets in the
decomposition is an $n$-minimal set. The \emph{degree} of $X$ is
$m$ if $X$ has dimension $n$ and $m$ is least such that there is a
minimal decomposition of $X$ which contains $m$ distinct
$n$-minimal sets.
\end{definition}

Fact \ref{minimal}, below, provides that a decomposable set has a minimal decomposition.

We require one additional definition to state Fact~\ref{main theorem},
the main result in \cite{OnSt}.

\begin{definition} \label{lexpresentdef}
A presentation $\EE$ of a definable set $(X,<)$ in an ambient structure $\mathcal C$
is called a
\emph{lexicographic presentation} if for every $x\in X$ and every
$i\leq n$ the class $[x]_{E_i}$ is convex with respect to $<$ and
for each $i\leq n$ the order $(X/E_i,<_i)$ given by the definition
of a presentation is the order inherited from $(X,<)$ (by
convexity the order is unambiguosly defined by any of the
representatives of the class).
\end{definition}

\begin{fact}[Theorem 5.1) in \cite{OnSt}]\label{main theorem}
Every decomposable set $X$ can be partitioned into finitely many
subsets, each of which has a lexicographic presentation.
\end{fact}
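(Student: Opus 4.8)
The plan is to prove that every decomposable set can be partitioned into finitely many pieces each admitting a lexicographic presentation—that is, a presentation in which all equivalence classes are convex and the auxiliary orders $<_i$ agree with the order inherited from $X$. I would proceed by induction on the depth $n$ of a given presentation $\langle E_1,\dots,E_n\rangle$ of $X$. The base case $n=0$ is trivial (a finite set is already lexicographically presented by the empty sequence). The core of the argument is the inductive step, where the main work is to \emph{repair} the outermost equivalence relation $E_1$ so that its classes become convex, and then to recurse.

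First I would isolate the fundamental local move. Fix $i<n$ and work inside a single $E_i$-class; by clause iii of Definition~\ref{amenable}, the set of $E_{i+1}$-classes it contains, ordered by $<_i$, is a \emph{dense o-minimal} set. The key observation is that an o-minimal linear order on the quotient lets us compare the given order $<_i$ with the order induced from $(X,<)$: for each $E_{i+1}$-class $c$, consider how its elements sit inside $X$ relative to elements of other classes. The plan is to use o-minimality of the quotient to show that the relation ``$c$ and $c'$ are not separated in the $<$-order'' (i.e.\ some element of one lies $<$-between two elements of the other) is a definable equivalence-like relation with finitely many ``interleaving blocks,'' so that after passing to a \emph{finite} partition of the quotient into $<_i$-intervals, on each piece the two orders $<_i$ and the $<$-induced order either coincide or are reverse, and the classes become $<$-convex. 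Reversing an order is harmless, so I can arrange agreement.

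Next I would assemble these local repairs into a global statement. Because clause i gives only finitely many $E_1$-classes, I can handle them one at a time; within each, the finite partition from the previous paragraph refines the presentation at level one. Iterating down through $i=1,\dots,n-1$, each step multiplies the number of pieces by a finite factor coming from o-minimality (finitely many intervals and points), so the total number of pieces remains finite. On each final piece I would check that \emph{all} the $E_i$-classes are simultaneously convex and that each $<_i$ is the restriction of $<$; this yields a lexicographic presentation of that piece in the sense of Definition~\ref{lexpresentdef}. The bottom level $E_n$ is handled by clause iv together with the same convexification applied to the innermost dense o-minimal fibers.

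The hard part will be controlling the interaction \emph{between} levels: convexifying $E_1$ may force a re-partition that disturbs the finer relations $E_2,\dots,E_n$, so I must verify that the local move at level $i$ can be carried out uniformly in the parameter $w$ ranging over the coarser class, using definability and o-minimality to keep the number of interleaving blocks bounded independently of $w$ (a uniform finiteness / definability-of-Skolem-functions argument in the o-minimal quotient). The other delicate point is bookkeeping to ensure the successive refinements still satisfy clauses i--iv after convexification, which I expect to follow from the fact that a definable partition of a dense o-minimal set into finitely many intervals and points preserves the relevant density and o-minimality on each infinite piece. Once uniform finiteness is established, the induction closes and the finite partition of $X$ into lexicographically presented pieces is obtained.
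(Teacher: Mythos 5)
Your proposal has a genuine gap, and it is not a repairable technicality: the whole strategy of keeping the \emph{given} equivalence classes and finitely partitioning so that those classes become $<$-convex (with $<_i$ agreeing or anti-agreeing with the induced order) cannot work. Here is a concrete counterexample to your ``key observation.'' Let $X=\mathbb{R}^2$ in an o-minimal expansion of the real field, ordered by $(a,b)<(c,d)$ iff $b<d$, or $b=d$ and $a<c$ (lexicographic with the \emph{second} coordinate dominant). Take $E_1$ trivial (one class) and let the $E_2$-classes be the vertical lines $\{a\}\times\mathbb{R}$, with $<_1$ on the quotient given by $a$. This satisfies Definition~\ref{amenable}: the quotient is dense o-minimal, and each class with the inherited order is a copy of $(\mathbb{R},<)$, so $X$ is decomposable of depth $2$. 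But every $E_2$-class is dense in $X$, so \emph{every} pair of classes interleaves: your ``not separated'' relation has a single block containing everything, there is no induced order on classes at all, and restricting to any $<_1$-interval of the quotient leaves the (restricted) classes just as dense and non-convex as before. No finite partition of $X$ can make these particular classes convex, since each piece of positive dimension still meets continuum-many classes, each dense in the piece.

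The theorem is nevertheless true for this $X$, but only because $X$ admits a \emph{different} lexicographic presentation, namely the one whose finest classes are the horizontal lines $\mathbb{R}\times\{b\}$ --- classes that are transversal to the given ones, not obtained from them by refinement, coarsening, or restriction. This shows what any correct proof must do: construct \emph{new} equivalence relations, definable from the order $<$ and the ambient structure (a re-fibering, in the spirit of what Lemma~\ref{refibered} does in the group setting), rather than convexify the given ones. Note also that the statement you were asked to prove is quoted in this paper as a Fact from \cite{OnSt} and is not proved here, so there is no in-paper argument to compare against; but the counterexample above shows your proposed route, as written, breaks down at its central step.
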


We also shall avail ourselves of the next fact; it is Lemma~5.2 in
\cite{OnSt}.

\begin{fact}\label{minimal}
Let $\{X_j\}_{j\in J}$ be a (definable) family of decomposable
subsets in a decomposable set $X$  with uniformly definable
non-redundant presentations of dimension $n$. Then there are a
positive integer $k$ and, for each $j\in J$, sets $Y^j_1,\ldots
,Y^j_k$ that partition $X_j$ such that:
\begin{itemize}
\item[i.] for each $i\leq k$ the
family $\{Y^j_i\}_{j\in J}$ is a definable family of minimal sets
whose presentations are given uniformly definably;

\item[ii.] for each $j\in J$ and $i\leq k$ for which $Y^j_i$  is
$n\/$-minimal, the ordering given by its presentation is the
restriction of  the ordering on $X_j$ given by its presentation.
\end{itemize}
In particular, if the family is just the singleton $\{X\}$ and the
presentation of $X$ is lexicographic, then so are the
presentations of the $n$-minimal sets.
\end{fact}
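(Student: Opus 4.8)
The plan is to prove Fact~\ref{minimal} by induction on the common dimension $n$ of the presentations, using as the central tool a \emph{uniform} o-minimal cell decomposition: for a definable family of dense o-minimal sets (each a finite union of open intervals), uniform finiteness bounds the number of constituent intervals, while the monotonicity/cell-decomposition apparatus supplies their endpoints definably, so the entire family can be split simultaneously into a fixed number of single-interval ($1$-minimal) and singleton ($0$-minimal) pieces. Everything below is to be carried out in the family, never one $j$ at a time.

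For the base case I would treat $n=0$ and $n=1$ together. If $n=0$ each $X_j$ is finite and, by uniform finiteness, of cardinality at most some fixed $N$; partitioning each into $N$ singletons (allowing empty pieces) yields the $Y^j_i$. If $n=1$ each $X_j$ is dense o-minimal, and the uniform cell decomposition of the previous paragraph partitions it definably into a bounded number of single intervals and points, the former $1$-minimal and the latter $0$-minimal, with their presentations (the empty tuple, or the inherited order) given uniformly in $j$.

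For the inductive step, given uniformly definable non-redundant presentations $\langle E^j_1,\dots,E^j_n\rangle$ of dimension $n\ge 2$, I would first use the uniform cell decomposition to refine the finest relation $E^j_n$ to a definable $\tilde E^j_n$ each of whose classes is a single infinite interval, splitting every original fibre into a uniformly bounded number of intervals; one checks that $\langle E^j_1,\dots,E^j_{n-1},\tilde E^j_n\rangle$ is still a uniformly definable presentation, since a finite refinement of a dense o-minimal set is again dense o-minimal. Because $\tilde E^j_n$ refines each $E^j_i$, the quotients descend and $\langle E^j_1/\tilde E^j_n,\dots,E^j_{n-1}/\tilde E^j_n\rangle$ is a uniformly definable non-redundant presentation of $X_j/\tilde E^j_n$ of dimension $n-1$; by the inductive hypothesis this family splits into at most $k'$ uniformly definable minimal sets $Z^j_l$ satisfying (i)--(ii). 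Pulling back along $\pi_j\colon X_j\to X_j/\tilde E^j_n$ gives pieces $\pi_j^{-1}(Z^j_l)=\bigcup_{b\in Z^j_l}[b]_{\tilde E^j_n}$, fibred over the minimal base $Z^j_l$ with single-interval fibres; appending $\tilde E^j_n$ to the uniform minimal presentation of $Z^j_l$ exhibits each piece as a minimal set one dimension higher, with a uniformly definable presentation, and taking $k$ to be the resulting count establishes (i).

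Condition (ii) and the \emph{in particular} clause I would handle by noting that the only orders introduced are those inherited from $<$: all cell decompositions are taken with respect to the induced order on each quotient, so when the input presentation is lexicographic every class stays $<$-convex and every quotient order remains the induced one, whence the presentations of the $n$-minimal $Y^j_i$ are again lexicographic and their orders restrict that on $X_j$ (in the general case one first invokes Fact~\ref{main theorem}, checking that the reduction to lexicographic presentations is itself uniform). The main obstacle is not the pointwise decomposition, which the induction handles cleanly, but the uniformity: securing a single integer $k$ and uniformly definable presentations across all $j\in J$ forces every count and every endpoint computation to be performed in definable families, so the real work lies in establishing the uniform-finiteness and uniform cell-decomposition statements for families of dense o-minimal sets and propagating them, intact, through the quotients at each level.
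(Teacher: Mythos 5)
First, a point of comparison: the paper contains no proof of Fact~\ref{minimal} to measure your argument against --- it is quoted verbatim as Lemma~5.2 of \cite{OnSt}. So your proposal has to stand on its own, and it has a genuine gap at the first move of the inductive step. You refine $E^j_n$ to a relation $\tilde E^j_n$ whose classes are single intervals and assert that $\langle E^j_1,\dots,E^j_{n-1},\tilde E^j_n\rangle$ is still a presentation ``since a finite refinement of a dense o-minimal set is again dense o-minimal.'' That justification addresses the wrong object. What Definition~\ref{amenable}(iii) demands is that the \emph{set of $\tilde E^j_n$-classes} inside each $E^j_{n-1}$-class, under some definable linear order, be a \emph{dense} o-minimal set. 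Whenever an original $E^j_n$-class genuinely splits into two or more intervals, those intervals are adjacent in any natural definable order on the new index set (order first by the old class, then by the position of the interval): nothing can lie strictly between two pieces of the same old class, so immediate-successor pairs appear and density fails. Hence the refined tuple is not a presentation at all, the quotient $X_j/\tilde E^j_n$ does not inherit the dimension-$(n{-}1)$ presentation your induction requires, and the pull-back construction never gets off the ground. The missing idea is precisely the \emph{separation} device this paper uses inside the proof of Lemma~\ref{refibered}: after refining, one must further partition $X_j$ into boundedly many sets $C_1,\dots,C_k$ (uniform finiteness supplies the bound) so that within each $C_i$ every original $E_n$-class contributes at most one of its intervals. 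Only then does the new index set embed order-compatibly into the old dense o-minimal one --- and even then only as a definable subset of it, i.e.\ a finite union of intervals and points, which forces further decomposition one level up; the recursion must therefore thread through all $n$ levels of the presentation, not just the bottom one as in your sketch.

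Two secondary problems compound this. Applying the inductive hypothesis to $X_j/\tilde E^j_n$ applies the Fact to an interpretable quotient, whereas the statement concerns definable \emph{subsets} of the decomposable set $X$; the paper itself treats this as a delicate point (see the proof of Corollary~\ref{exactness}, where elimination of imaginaries has to be invoked or worked around). And your central tool, ``uniform o-minimal cell decomposition,'' is not available off the shelf: the ambient structure $\mathcal C$ is not o-minimal --- only the pairs $(X,<_X)$ are, in the sense that definable subsets of $X$ are finite unions of $<_X$-intervals and points --- so cell decomposition, monotonicity, and uniform finiteness for definable \emph{families} of subsets of such a pair are themselves statements that must be established within this framework rather than imported from the o-minimality literature. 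Your closing paragraph correctly identifies uniformity as the real difficulty, but the proposal as written assumes exactly the uniform statements that need proof.
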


\begin{corollary}\label{cor:minimal}
Let $\EE$ be a presentation of $X$. Then there is a presentation
$\EEp$ of $X$ such that the restriction of $\EEp$ to any of the
the $E_1'$-classes of dimension $n$ is a minimal lexicographic presentation.
\end{corollary}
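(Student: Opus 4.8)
The plan is to refine the top level of the given presentation so that the new top-level relation $E_1'$ isolates the individual $n$-minimal pieces of $X$, each of which already carries its own minimal lexicographic presentation. The two tools are Fact~\ref{main theorem}, which supplies lexicographic presentations, and Fact~\ref{minimal}, which extracts minimal sets uniformly. First I would apply Fact~\ref{main theorem} to write $X$ as a finite disjoint union of definable subsets $X_1,\dots,X_m$, each with a lexicographic presentation. Fixing $j$ and applying Fact~\ref{minimal} to the singleton family $\{X_j\}$, I obtain a partition of $X_j$ into minimal sets whose presentations are given uniformly definably; by the final clause of Fact~\ref{minimal}, each $n$-minimal member of this partition inherits a \emph{lexicographic} minimal presentation, the order from its presentation being the restriction of $<$.

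I would then let $E_1'$ be the definable equivalence relation on $X$ whose classes are exactly the minimal sets produced above (ranging over all $j$), and on each such class define $E_2',\dots,E_n'$ to be the equivalence relations of that set's minimal presentation; the uniformity in Fact~\ref{minimal} makes these relations definable on all of $X$, and $E_{i+1}'$ refines $E_i'$ by construction. To see that $\EEp=\langle E_1',\dots,E_n'\rangle$ is a presentation in the sense of Definition~\ref{amenable}, observe that condition~i holds since there are finitely many classes, condition~ii by construction, while conditions~iii and~iv are \emph{local}: for any $w$ the set $\{[x]_{E_{i+1}'}\mid E_i'(x,w)\}$ lies inside the single $E_1'$-class of $w$, where the inherited relations already constitute a valid minimal presentation, so the required density and o-minimality hold there. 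By construction the restriction of $\EEp$ to each class is precisely that set's minimal presentation, and on the $n$-minimal classes this presentation is lexicographic, which is exactly the assertion of the corollary.

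The step I expect to be the main obstacle is guaranteeing that \emph{every} class of $E_1'$ is genuinely $n$-minimal, equivalently that the decomposition leaves no lower-dimensional residue; without this the classes would neither exhaust $X$ nor satisfy condition~iii. Here I would exploit the hypothesis that $X$ already admits the presentation $\EE$: conditions~iii and~iv of Definition~\ref{amenable}, applied to $\EE$ itself, force every $E_1$-class, and hence $X$, to be homogeneously $n$-dimensional, so that after refinement only $n$-minimal sets occur. Making this purity argument precise, and reconciling it with the partition delivered by Fact~\ref{main theorem}---which a priori might split off lower-dimensional subsets that then have to be reabsorbed into neighbouring $n$-minimal sets---is the one point requiring genuine care; once it is settled, the assembly of the previous paragraph yields the desired presentation $\EEp$.
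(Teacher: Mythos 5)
Your first two paragraphs reproduce what is in effect the paper's own (implicit) proof: the corollary is stated without proof precisely because it follows by combining Fact~\ref{main theorem} (partition $X$ into finitely many definable pieces carrying lexicographic presentations) with Fact~\ref{minimal} applied to each piece as a singleton family (partition each piece into uniformly definable minimal sets, the $n$-minimal ones inheriting lexicographic presentations whose order is the restriction of $<$), and then assembling $E_1'$ from the resulting minimal pieces and the finer relations $E_2',\dots,E_n'$ from their presentations. Up to that point your argument is correct and is the same argument.

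The problem is your third paragraph, which makes the whole proof conditional on a claim that is both unnecessary and false. The corollary does not require every $E_1'$-class to be $n$-minimal: its conclusion constrains only those $E_1'$-classes \emph{of dimension $n$}, and lower-dimensional classes are explicitly tolerated. This is not an accident of phrasing---in the paper's very next use of the corollary the $E_1$-classes are enumerated $C_1,\dots,C_k,C_{k+1},\dots,C_t$ with $\dim(C_i)=n$ only for $i\leq k$, so lower-dimensional classes are expected to survive the refinement. Nor could your proposed purity argument succeed: admitting a presentation of depth $n$ does not make $X$ ``homogeneously $n$-dimensional.'' Definition~\ref{lexdimension} defines $\dim(X)=n$ by requiring only that the \emph{maximal} depth occurring in some minimal decomposition be $n$, and minimal decompositions containing pieces of several different dimensions are the typical case, not a degeneracy to be ruled out. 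So there is no lower-dimensional residue to ``reabsorb'': the lower-dimensional minimal sets produced by Fact~\ref{minimal} simply become $E_1'$-classes about which the corollary asserts nothing (on such classes, of depth $m<n$, the chain of relations is allowed to collapse, exactly as in the paper's own usage, so your ``local'' verification of conditions~iii and~iv should be stated only for the dimension-$n$ classes). With the third paragraph deleted and this reading adopted, your first two paragraphs already constitute a complete proof.
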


\medskip

Throughout the paper we will work in the category of definable groups in some o-minimal 
expansion of a real closed field. So all groups, homomorphisms and actions will be assumed 
to be definable.

\section{Existence of a normal convex o-minimal subgroup}\label{section2}

We here prove Theorem \ref{o-minimal subgroup}, which asserts that
every decomposable ordered group $G$ has an o-minimal convex normal
subgroup. This implies (Theorem~\ref{decomposable}) that every such
group is solvable and as {\em an ordered set} is isomorphic to a
finite lexicographic product of o-minimal groups. In particular, if
the group $G$ is definable in an o-minimal structure $\mathcal M$ in
which every two definable o-minimal groups are (definably)
isomorphic---at least those that appear as quotients in the normal
chain witnessing that $G$ is solvable---it follows that $G$ as an
ordered set is isomorphic to $M^n$ ordered lexicographically. In
fact, with the subset topology this isomorphism is a homeomorphism.
Although the context in which all definable o-minimal groups are
isomorphic may seem overly restrictive, it is not so: we say
more about this in Section~\ref{subsection3.2}.

Throughout this section $G$ denotes a decomposable ordered group.
The proof of Theorem \ref{o-minimal subgroup} proceeds as follows.
We first apply Fact~\ref{main theorem}
and Corollary~\ref{cor:minimal} to give adequate presentations of
$G$ and state some properties of the presentations we obtain in our
context. We then prove the existence of a presentation in which
each of the finest equivalence classes is an o-minimal convex
interval. This implies Theorem \ref{o-minimal subgroup} via an
analysis of how the group structure interacts with this
presentation.

\bigskip

We begin by some easy consequences of the results in
\cite{OnSt} applied to groups.

\begin{proposition}\label{no successor} An ordered decomposable group $G$
is dense with respect to its order $<$.
That is, no element in $G$ has an
immediate successor under $<$.
\end{proposition}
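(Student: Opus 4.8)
The plan is to argue by contradiction, exploiting the invariance of the order under translation to show that a single immediate successor would force \emph{every} element to have one, which is incompatible with the density built into any presentation of $G$. Since a finite group is not decomposable in the relevant sense---the empty presentation yields a finite set, which is manifestly not dense---I may assume $G$ is infinite, so that any presentation $\EE$ of $G$ has depth $n\geq 1$ and its finest classes are genuine dense o-minimal sets.

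First I would suppose, toward a contradiction, that some $a\in G$ has an immediate successor $a^{+}$; that is, $a<a^{+}$ with no element of $G$ strictly between them. Because $G$ is an ordered group, for each $h\in G$ the left translation $x\mapsto hx$ is a definable order-automorphism of $(G,<)$ (the right-invariant case is symmetric). Applying the automorphism $x\mapsto (ga^{-1})x$ to the pair $a<a^{+}$ shows that $g=(ga^{-1})a<(ga^{-1})a^{+}$ with nothing strictly between, so $g\cdot(a^{-1}a^{+})$ is the immediate successor of $g$. Thus \emph{every} element of $G$ has an immediate successor.

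Next I would pass to a well-behaved presentation. By Fact~\ref{main theorem} I may partition $G$ into finitely many pieces, each carrying a lexicographic presentation; as $G$ is infinite, at least one such piece $P$ has depth $\geq 1$, and by Definition~\ref{amenable}(iv) together with the lexicographic hypothesis, the finest equivalence classes of $P$ are \emph{convex} with respect to the global order $<$ and are \emph{dense} o-minimal. Fix such a class $C$; it is infinite, so I can choose $x\in C$ that is not the largest element of $C$, and then some $y\in C$ with $x<y$. Writing $s(x)$ for the immediate successor of $x$ produced above, I note that $s(x)$ is the least element exceeding $x$, so $x<s(x)\leq y$; convexity of $C$ together with $x,y\in C$ then forces $s(x)\in C$. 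But $C$ is dense, so there is $z\in C$ with $x<z<s(x)$, contradicting that $s(x)$ is the immediate successor of $x$. This contradiction shows that no element of $G$ has an immediate successor, i.e.\ $G$ is dense.

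The substantive point---and the step I expect to require the most care---is the coupling in the last paragraph of the group's translation-invariance with the combinatorial decomposition: one must guarantee that the putative successor $s(x)$ lands \emph{inside} the dense convex class $C$ rather than jumping to the next class, which is precisely what convexity of the lexicographic presentation delivers. A minor wrinkle to dispatch is the possibility that $C$ has a maximum, which is handled by selecting $x$ among the infinitely many non-maximal elements of $C$.
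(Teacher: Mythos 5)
Your opening move---propagating a single immediate successor to every element by translation invariance---is exactly how the paper's proof begins, and it is correct. The genuine gap comes at the sentence ``convexity of $C$ together with $x,y\in C$ then forces $s(x)\in C$.'' Fact~\ref{main theorem} partitions $G$ into finitely many pieces, each carrying a lexicographic presentation \emph{of that piece}: in Definition~\ref{lexpresentdef} the convexity of the classes is with respect to the order on $X$, which here is the piece $P$, so the finest classes are convex only as subsets of $P$, not of $G$. The remaining pieces of the partition may interleave with $C$, so from $x<s(x)\leq y$ you may conclude $s(x)\in C$ only if you already know $s(x)\in P$, and nothing gives you that. Without $s(x)\in C$ the contradiction evaporates: density of $C$ in itself merely says that $\{z\in C\mid z>x\}$ has no minimum, which is entirely compatible with that set being bounded below by an element $s(x)\notin C$. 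Indeed, a discretely ordered group can contain dense-in-itself subsets: in $\mathbb{Q}\times\mathbb{Z}$ with the lexicographic order---a discretely ordered abelian group in which $(p,a)$ has immediate successor $(p,a+1)$---the subgroup $\mathbb{Q}\times\{0\}$ is dense in itself. Density of a non-convex subset simply cannot contradict discreteness of the ambient order.

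Worse, repairing this along your lines would require a presentation of $G$ whose finest classes are convex in $G$ itself; producing such a presentation is precisely the content of the paper's later Lemma~\ref{refibered}, whose proof (via Lemma~\ref{limit}) explicitly invokes Proposition~\ref{no successor}, so that route is circular. The paper instead finishes model-theoretically: once every element has an immediate successor, the successor function is definable and one obtains infinite discretely ordered chains (the definable intervals from $a$ to the $n$-th successor of $a$ are finite of unbounded size), and no ordered structure with such chains satisfies uniform finiteness, whereas decomposable structures do. That is the step your proof needs and does not have; the translation argument is the only part the two proofs share. (Two minor quibbles: finite sets \emph{are} decomposable---the empty sequence is a presentation---though a finite ordered group is trivial, so the statement is vacuous there; and the finest classes of a lexicographic presentation need not be infinite without invoking Corollary~\ref{cor:minimal}.)
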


\begin{proof}
By group translation, if one element has an immediate successor
then all elements do. This implies the existence of
infinite discretely ordered chains, which is impossible in a decomposable
structure, as no ordered theory with an infinite
discrete chain satisfies uniform finiteness.
\end{proof}

Let $\EE$ be a presentation of $G$ as provided by Corollary
\ref{cor:minimal}.  Additionally let $C_1,\dots,C_k, C_{k+1}, \dots, C_t$ be an
enumeration of the finitely many $E_1$-classes such that $G=C_1\cup \dots \cup C_t$,
gives a decomposition of $X$ into sets with
minimal lexicographic presentations such that $\dim(C_i)=\dim (G)=n$ for $1\leq i\leq k$.
For the remainder of this section, we assume that $G$ is presented this way.

As in \cite{OnSt}, for $1\leq i\leq t$ and $x\in C_i$ we define the {\em gap\/} of $x$ in $C_i$
with respect to $G$, denoted $\gap (x, C_i, G)$, by
\smallskip
\[
\begin{split}
\{x\}\,\cup \,&\{ y\in G \mid y>x \,\wedge\, (\forall z\in C_i)\, z>x\rightarrow z>y \}\\
\cup\, &\{ y\in G \mid y<x \,\wedge\, (\forall z\in C_i)\, z<x\rightarrow
z<y\}.
\end{split}
\]
It is easy to see that if $\gap(x, C_i, G)$ is finite but not a
singleton, then some element of $\gap (x, C_i, G)$ is an immediate
successor of $x$. This yields a direct corollary of~\ref{no successor}.

\begin{proposition}\label{singleton}
For all $x\in C_i$ the set $\gap(x, C_i, G)$ is
either a singleton or infinite.
\end{proposition}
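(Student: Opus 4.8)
The plan is to prove the contrapositive: assume $\gap(x, C_i, G)$ is finite but not a singleton, and derive a contradiction with Proposition~\ref{no successor}. The key observation, already noted in the excerpt, is that a finite non-singleton gap must contain an immediate successor (or predecessor) of $x$. So the entire argument reduces to justifying that remark precisely and then invoking density.

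First I would unpack the definition of $\gap(x, C_i, G)$. It consists of $x$ together with those $y > x$ that are ``trapped above $x$ relative to $C_i$'' (no element of $C_i$ lies strictly between $x$ and $y$ in the relevant sense), and symmetrically for $y < x$. Suppose this set is finite with more than one element. Then it has a least element strictly greater than $x$, or a greatest element strictly less than $x$; without loss of generality assume the former, call it $y_0$. I would argue that $y_0$ is an immediate successor of $x$ in $G$: if some $z$ satisfied $x < z < y_0$, then $z$ would also have no element of $C_i$ strictly between it and $x$ (since $y_0$ was the least such element above $x$ and $z < y_0$), so $z$ would itself lie in $\gap(x, C_i, G)$, contradicting minimality of $y_0$. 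Hence nothing lies strictly between $x$ and $y_0$, i.e.\ $y_0$ is the immediate successor of $x$ under $<$.

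This directly contradicts Proposition~\ref{no successor}, which asserts that $G$ is dense and hence no element has an immediate successor. The symmetric case, where the gap is finite but has a greatest element below $x$, produces an immediate predecessor and is handled identically (or by applying the successor case to the reversed order). Therefore $\gap(x, C_i, G)$ cannot be finite and non-singleton, so it is either a singleton or infinite, as claimed.

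I do not anticipate a serious obstacle here, since Proposition~\ref{singleton} is explicitly flagged as a direct corollary of Proposition~\ref{no successor}. The only point requiring care is the bookkeeping in the definition of the gap: one must check that the candidate extremal element $y_0$ genuinely witnesses an immediate successor, which hinges on correctly reading the quantifier structure $(\forall z \in C_i)\, z > x \rightarrow z > y$ in the definition. The essential content is that this ``trapping'' condition is downward closed on the interval $(x, y_0)$, so that finiteness plus non-singularity of the gap forces a discrete step adjacent to $x$, which density forbids.
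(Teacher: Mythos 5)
Your proof is correct and follows exactly the paper's intended argument: the paper proves Proposition~\ref{singleton} by the one-line remark that a finite non-singleton gap yields an immediate successor of $x$, contradicting Proposition~\ref{no successor}. Your write-up simply makes precise the ``downward-closure'' bookkeeping that the paper leaves implicit, so there is nothing to add.
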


Let $1\leq i\leq k$. Observe that additivity of dimension
implies that the set of all $x\in C_i$ for which $\gap(x, C_i, G)$
is infinite has dimension at most $n-1$.
By  $\reg(C_i)$ we denote the set of all $x\in C_i$ such that $\gap(x, C_i, G)$ is a singleton. Thus,
if $E_1,\dots, E_n$ is a minimal lexicographic decomposition for
$C_i$, then the set of $E_n$-classes that contain an interval in $\reg(C_i)$
has dimension $n-1$; even more, the set of such $E_n$-classes has
codimension less than $n-1$. Since the $E_n$-classes are o-minimal and
convex in $C_i$, it follows that
\[
\gap(x, C_i, G)=\gap(x, [x]_{E_n}, G).
\]

\begin{proposition}\label{singletongap}
Let $I$ be an o-minimal subset of $G$. Then $\gap(a, I, G)=\{a\}$ for all $a\in I$ if and only if $I$ is convex in $G$.
\end{proposition}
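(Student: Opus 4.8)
The plan is to begin by unwinding the definition of the gap into a one-sided form. For $a\in I$ one checks directly from the displayed definition that $y$ lies in the upper summand of $\gap(a,I,G)$ precisely when $y>a$ and the interval $(a,y]$, computed in $G$, contains no point of $I$, and symmetrically that $y$ lies in the lower summand precisely when $y<a$ and $[y,a)$ contains no point of $I$. Thus $\gap(a,I,G)$ is the largest convex subset of $G$ that contains $a$ and meets $I$ only in $a$, and the condition $\gap(a,I,G)=\{a\}$ is equivalent to saying that $a$ is a two-sided limit of $I$ inside $G$: for every $y>a$ there is $z\in I$ with $a<z\le y$, and for every $y<a$ there is $z\in I$ with $y\le z<a$. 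With this reformulation both implications become statements about whether the points of $I$ approach one another from each side without leaving room in $G$.

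For the implication that convexity forces singleton gaps, I would fix $a\in I$ and suppose toward a contradiction that some $y>a$ lies in the gap, so that $(a,y]$ meets $I$ in no point. Since $I$ is a dense o-minimal interval without endpoints, $a$ is not the largest element of $I$, so there is $b\in I$ with $b>a$. If $b\le y$ then $b\in(a,y]\cap I$, a contradiction; if $b>y$ then $a<y<b$ with $a,b\in I$ forces $y\in I$ by convexity, so again $y\in(a,y]\cap I$. The lower summand is handled symmetrically, so every gap is a singleton.

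For the converse I argue contrapositively: assuming $I$ is not convex in $G$, I produce a point of $I$ with a non-singleton gap. Non-convexity gives $w\in G\setminus I$ with $a<w<b$ for some $a,b\in I$, so $L=\{z\in I: z<w\}$ and $U=\{z\in I: z>w\}$ are nonempty, definable in $I$, and partition $I$ into an initial and a final segment. Because $I$ is a dense o-minimal interval it is definably connected with no internal gaps in its own order, so the definable cut $(L,U)$ is realized on one side: either $L$ has a greatest element $c$ or $U$ has a least element $d$. If $c=\max L$, then no point of $I$ lies in $(c,w]$, so $w$ lies in the upper summand of $\gap(c,I,G)$ and that gap is not $\{c\}$; if instead $d=\min U$, then $w$ lies in the lower summand of $\gap(d,I,G)$. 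Either way some gap fails to be a singleton, as required.

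The step I expect to be the crux is the realization of the cut in the converse, namely ruling out that $w$ sits inside a two-sided gap of $I$, with $L$ lacking a maximum and $U$ lacking a minimum; in that degenerate configuration the hole at $w$ would be invisible from every single point of $I$ and the converse would genuinely fail. This is exactly where one must use that the dense o-minimal set $I$ is a single order-interval: by o-minimality $L$ and $U$ are finite unions of points and intervals of $I$, and definable connectedness forbids the situation in which the top piece of $L$ and the bottom piece of $U$ are both open toward the hole. The analogous structural input, that $I$ has no endpoints in $G$, is what licenses the auxiliary point $b$ in the first implication. Once these facts about dense o-minimal intervals are in hand, the remaining verifications are the direct unwindings recorded above.
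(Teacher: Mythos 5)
Your argument is correct and is essentially the paper's: the paper likewise treats the direction (convex $\Rightarrow$ singleton gaps) as immediate, and proves the other direction contrapositively by using o-minimality of $(I,<_I)$ to realize, on one side, the cut determined by a non-convexity witness $w\in G\setminus I$, so that $w$ lies in the gap of the realizing endpoint --- exactly your $L/U$ argument, compressed into the single clause ``by o-minimality of $I$ there is some $a\in I$ \dots\ and some $b\in G\setminus I$ such that $x<b<a$ for all $x\in I$ with $x<a$.'' Your explicit appeal to density and the absence of endpoints of $I$ in the easy direction (genuinely needed, since a convex $I$ with a maximum would have a ray as the gap of that maximum) is left implicit in the paper, which simply asserts that this direction requires no argument.
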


\begin{proof}
Only the left-to-right implication requires any argument. By o-minimality of $I$
there is some $a\in I$ for which, without loss of generality, there is some
$b\in G\setminus I$ such that $x<b<a$ for all $x\in I$ with $x<a$.
Thus $b\in \gap(a,I, G)$, as required.
\end{proof}

The following is an immediate consequence of the proposition above
and the discussion preceding it.

\begin{lemma}
Let $1\leq i\leq k$ and $I$ be an interval contained in an $E_n$-class in $C_i$ such
that $I\subset \reg(C_i)$. Then $I$ is convex in $G$.
In particular, the set of all $x\in C_i$
contained in such an interval
has dimension $n$ (in fact, codimension less than $n$).
\end{lemma}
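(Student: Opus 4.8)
The plan is to verify the hypothesis of Proposition~\ref{singletongap} for the set $I$, namely that $\gap(a,I,G)=\{a\}$ for every $a\in I$, and then to read off convexity of $I$ in $G$ directly from that proposition. First I would note that $I$ is genuinely o-minimal: it is an interval inside the $E_n$-class $[a]_{E_n}$, which is o-minimal, and a definable convex subset of an o-minimal set is again o-minimal, so the proposition applies. The one genuine subtlety is that shrinking the reference set from $C_i$ to $I$ \emph{enlarges} the gap --- fewer elements $z$ are quantified over, so only the inclusion $\gap(a,C_i,G)\subseteq\gap(a,I,G)$ is automatic --- and hence the equality $\gap(a,C_i,G)=\{a\}$ furnished by $a\in\reg(C_i)$ does not transfer to $I$ for free. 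Controlling this is the heart of the matter.

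Fixing $a\in I$, I would argue as follows. Since $I$ is an (open) interval, $a$ is not an endpoint, so there are $b,c\in I$ with $b<a<c$. Combining $a\in\reg(C_i)$ with the identity $\gap(x,C_i,G)=\gap(x,[x]_{E_n},G)$ recorded above gives $\gap(a,[a]_{E_n},G)=\{a\}$. Suppose toward a contradiction that some $y\in\gap(a,I,G)$ has $y>a$. Applying the defining condition of $\gap(a,I,G)$ to $c\in I$ yields $c>y$, so $a<y<c$. On the other hand, because $\gap(a,[a]_{E_n},G)=\{a\}$ and $y>a$, the element $y$ must fail the gap condition in $[a]_{E_n}$, producing some $z\in[a]_{E_n}$ with $z>a$ and $z\le y$. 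Then $a<z\le y<c$, so $a<z<c$ with $z\in[a]_{E_n}$ and $a,c\in I$; as $I$ is convex in $[a]_{E_n}$, I conclude $z\in I$. But then $z\in I$ with $z>a$, so the gap condition for $I$ forces $z>y$, contradicting $z\le y$. Thus no such $y$ exists, and symmetrically (using $b<a$) none lies below $a$ either, whence $\gap(a,I,G)=\{a\}$. As $a\in I$ was arbitrary, Proposition~\ref{singletongap} delivers that $I$ is convex in $G$.

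The step I expect to be the main obstacle is precisely this pinching: producing a witness $z\in[a]_{E_n}$ strictly between $a$ and $y$ and then trapping it inside $I$ by convexity. It is exactly here that both ingredients are indispensable --- the openness of $I$ supplies the upper neighbour $c$ that caps $y$, and the convexity of $I$ within its $E_n$-class forces the witness back into $I$ --- and without them the a priori larger gap relative to $I$ cannot be tamed.

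Finally, for the ``in particular'' clause I would invoke additivity of dimension together with the observations immediately preceding the lemma. The $E_n$-classes of $C_i$ are parametrized by an $(n-1)$-dimensional base, and it was noted there that the classes meeting $\reg(C_i)$ in an interval already form an $(n-1)$-dimensional subfamily, the exceptional classes being negligible. Since each such class contributes a $1$-dimensional interval contained in $\reg(C_i)$, the union of all these intervals --- that is, the set of $x\in C_i$ lying in an interval of the stated form --- has dimension $(n-1)+1=n$, giving full dimension (codimension $<n$) inside the $n$-dimensional set $C_i$.
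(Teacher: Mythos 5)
Your proof is correct and takes essentially the same route as the paper's: both reduce the statement to Proposition~\ref{singletongap} by showing $\gap(a,I,G)=\{a\}$ for all $a\in I$, using that $I$ is convex in its $E_n$-class, which (by the lexicographic presentation) is convex in $C_i$, together with $I\subseteq\reg(C_i)$. The only difference is one of detail: the paper simply asserts the equality $\gap(a,I,G)=\gap(a,C_i,G)$ from this convexity chain, whereas you justify that transfer explicitly via the openness-plus-convexity pinching argument, and you also spell out the dimension clause that the paper's proof leaves to the preceding discussion.
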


\begin{proof}
Let $x\in I$. Since $I$ is a convex subset of $[x]_{E_n}$
which, since the presentation of $C_i$ is lexicographic,
is a convex subset of $C_i$, we have
\[
\gap(x, I, G)=\gap(x, C_i, G).
\]
As $I\subseteq \reg(C_i)$ it follows by~\ref{singletongap} that
$I$ is convex in $G$.
\end{proof}

We now denote by $\reg(G)$ the set of all $x\in G$ such
that there is a convex o-minimal $I_x\subset G$ with $x\in I_x$.

\begin{lemma}\label{limit}
Let $I$ be convex in $G$ and $x\in I$. Then there is
some $j$ and some o-minimal convex $I'\subset C_j\cap \reg(G)$
such that $\{x\}\cup I'$ is convex in $G$. If $x\not\in I'$ then
$I'$ can be chosen so that $x<I'$ and $x$ is an infimum for $I'$ in $G$.
\end{lemma}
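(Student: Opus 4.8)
The plan is to build $I'$ as a regular o-minimal interval that abuts $x$ on one side, reading off its location from the minimal lexicographic presentation $\EE$ fixed above and from the gap analysis already developed. The structural facts I will lean on are: the $E_1$-classes $C_1,\dots,C_t$ are convex and finite in number; for $1\le i\le k$ the set of $x\in C_i$ with $\gap(x,C_i,G)$ infinite has dimension at most $n-1$, so $\reg(C_i)$ is dense and of full dimension in $C_i$; and, by the Lemma immediately preceding this statement, any interval contained in a single $E_n$-class and in $\reg(C_i)$ is automatically convex in $G$, hence contained in $\reg(G)$. The target dichotomy matches the two clauses of the statement: I will either produce $I'$ containing $x$, or produce $I'$ lying immediately above $x$ with $\inf I'=x$.

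First I treat the case in which the $E_n$-class $[x]_{E_n}$ of $x$ inside its class $C_i$ supplies regular points on one side of $x$. Since $[x]_{E_n}$ is o-minimal, the non-regular points of $C_i$ lying in it form a definable subset of a one-dimensional o-minimal set, hence are either finite or contain a subinterval. In the finite situation I may choose $c>x$ in $[x]_{E_n}$ with $(x,c)\subset\reg(C_i)$; then $I':=(x,c)$ lies in the single class $C_i$, is o-minimal, is convex in $G$ by the preceding Lemma, and has $\inf I'=x$ by density (Proposition~\ref{no successor}), so $\{x\}\cup I'$ is convex and we are in the clause $x\notin I'$, $x<I'$. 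If instead $x$ is maximal in a regular sub-interval of $[x]_{E_n}$, I take a regular interval $(a,x]$ and set $I':=(a,x]$, landing in the clause $x\in I'$. In either event convexity in $G$ is certified through Proposition~\ref{singletongap}.

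The substantive case is when the region of $G$ immediately above $x$ is non-regular, i.e.\ $\gap(x,C_i,G)$ is infinite (Proposition~\ref{singleton} rules out the intermediate possibility). Here I use the definition of the gap directly: writing $d:=\inf\{z\in C_i\mid z>x\}$, the convex set $(x,d)$ consists entirely of points lying in $E_1$-classes other than $C_i$, and it is infinite. Its intersections with the finitely many convex classes $C_j$ partition $(x,d)$ into finitely many convex pieces, and the bottom piece $C_{j_0}\cap(x,d)$ is a convex subset of $C_{j_0}$ whose infimum in $G$ is $x$. I then repeat the construction of the previous paragraph inside this piece: passing to its lowest $E_n$-class and to $\reg(C_{j_0})$ there, I extract an o-minimal interval $I'\subset C_{j_0}\cap\reg(G)$ with $x<I'$ and $\inf I'=x$. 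The convexity of the given $I$ is used to guarantee that, when $x$ is non-maximal in $I$, the points we draw on lie in $I$, so that $\{x\}\cup I'$ is genuinely convex in $G$.

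The main obstacle is precisely this last case: controlling the order type just above $x$ when $x$'s own class is non-regular, and certifying both that the extracted interval has infimum \emph{exactly} $x$ (rather than a strictly larger cut, which the density of the $E_n$-class ordering would otherwise permit) and that it sits inside a single $E_1$-class. I expect to resolve it by analyzing which classes populate $\gap(x,C_i,G)$, converting ``singleton gap'' into convexity in $G$ via Proposition~\ref{singletongap}, and arranging the descent into the bottom gap-piece so that it terminates---most naturally by induction on $\dim$ of the convex region under consideration, with Fact~\ref{minimal} supplying the uniformly definable minimal presentations needed to run the induction.
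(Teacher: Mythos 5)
Your proposal has a genuine gap, and it is located exactly where you flag your ``main obstacle.'' The paper's proof is not a descent through the presentation at all: it uses the group structure, which your argument never invokes. The paper picks any $y\in\reg(G)$ with witnessing o-minimal convex interval $I_y$ and translates it, setting $F_x:=I_y\cdot y^{-1}\cdot x$. Since right translation is a definable order-isomorphism of $G$, the set $F_x$ is an o-minimal convex open interval \emph{around} $x$; then $F_x\cap I$ is convex o-minimal, its partition into the finitely many pieces $F_x\cap I\cap C_j$ is a finite union of intervals and points, and so $x$ either lies in an interval of one piece or is the infimum of the piece immediately above it (density, Proposition~\ref{no successor}). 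This translation-homogeneity step --- every point has an o-minimal convex neighborhood because some point does --- is precisely what your combinatorial approach is missing and cannot reproduce.

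Concretely, two of your steps fail. First, in your initial case, regularity of the points of $[x]_{E_n}$ near $x$ together with density does \emph{not} give $\inf I'=x$ in $G$: that requires the gap of $x$ itself to be a singleton. If $\gap(x,C_i,G)$ is infinite on the upper side, then every point of $C_i$ above $x$ is separated from $x$ by the entire gap, so $\{x\}\cup I'$ is not convex no matter how regular the chosen points of $[x]_{E_n}$ are; Proposition~\ref{no successor} only excludes immediate successors and cannot detect this. (Your two cases are thus not complementary: ``regular points of $[x]_{E_n}$ near $x$'' and ``$\gap(x,C_i,G)$ infinite'' can hold simultaneously.) Second, in your substantive case, $d:=\inf\{z\in C_i\mid z>x\}$ need not exist, since $G$ is not order-complete; one can instead work with the upper part of the gap, which is convex, but then the ``lowest $E_n$-class'' of the bottom piece $C_{j_0}\cap\gap(x,C_i,G)$ need not exist either: the $E_n$-classes of $C_{j_0}$ are indexed by a \emph{dense} order, so the bottom piece can meet a strictly decreasing family of fibers with no minimum, and then no interval inside a single fiber abuts $x$. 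Your proposed induction via Fact~\ref{minimal} is exactly the unexecuted step that would have to exclude this configuration, and presentation combinatorics alone cannot: in a decomposable ordered \emph{set} with no group operation, a point can be approached from above by infinitely many whole fibers, in which case every convex set abutting it contains infinitely many fibers and fails to be o-minimal --- so the statement itself is false without the group. Any correct proof must use the translations, as the paper's does.
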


\begin{proof}
Let $x$ and $I$ be given by hypothesis, and let $y\in \reg(G)$
with $I_y$ the o-minimal open interval witnessing this.
Put $F_x:=I_y\cdot y^{-1}\cdot x$. Then
$F_x$ is an o-minimal, convex open interval around $x$,
and hence $I\cap F_x$ also is a convex o-minimal set.
We have that
\[
F_x\cap I:= \bigcup_{1\leq j\leq t} \left(F_x\cap I\cap C_j\right)
\]
partitions $F_x\cap I$, and by o-minimality is a finite union of
intervals and points. This implies that either $x$ belongs to some
(convex) interval $F_x\cap I\cap C_j$, or $x$ is a limit point of
some $F_x\cap I\cap C_j$ with $x<F_x\cap I\cap C_j$ (by
construction $x$ cannot be a supremum of $F_x$). The assertion that
$x$ the infimum of such a set follows immediately by Fact~\ref{no successor}.
\end{proof}

For a convex set $I$ we define $\cl^-\left(I\right)$ to be the set
consisting of $I$ and, if it exists, the infimum of $I$. Notice that
$\cl^-\left(I\right)\setminus I$ is at most a singleton. We now are
in position to prove the following, which takes us close to the
proof of Theorem~\ref{o-minimal subgroup}.

\begin{lemma}\label{refibered}
Let $G$ be a decomposable ordered group. Then there is a
lexicographic presentation $\langle F_1, \dots , F_n\rangle$ of $G$ such
that each $F_n$-class is an infinite convex o-minimal set.
\end{lemma}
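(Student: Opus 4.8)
The plan is to take as the finest relation $F_n$ the partition of $G$ into its maximal convex o-minimal subsets, and to build the coarser $F_1,\dots,F_{n-1}$ by descending through a presentation of the quotient $G/F_n$. The key enabling fact, which I would record first, is that $\reg(G)=G$. Indeed, applying Lemma~\ref{limit} to the trivially convex singleton $I=\{x\}$ yields an infinite o-minimal convex $I'\subset\reg(G)$ with $\{x\}\cup I'$ convex in $G$; since adjoining an infimum to an o-minimal convex set preserves o-minimality, $\{x\}\cup I'$ is itself o-minimal, so every $x\in G$ lies in an \emph{infinite} convex o-minimal interval. This is exactly what will force each $F_n$-class to be infinite.

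Next I would give $F_n$ by an explicit first-order formula, using the fixed decomposition $G=C_1\cup\dots\cup C_t$ and the convexity of the $E_n$-classes inside each $C_i$. Set
\[
x\,F_n\,y \ \Longleftrightarrow\ \bigwedge_{i\le t}\ \forall z\,\forall z'\,\Bigl(\bigl(x\wedge y\le z,z'\le x\vee y\bigr)\wedge C_i(z)\wedge C_i(z')\ \to\ E_n(z,z')\Bigr),
\]
i.e. $x\,F_n\,y$ holds precisely when the order-convex hull $[x\wedge y,\,x\vee y]$ meets each $C_i$ in a single $E_n$-class. This relation is manifestly definable, its classes are convex directly from the formula (shrinking the hull only weakens the condition), and reflexivity and symmetry are immediate.

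The crux of the argument, and the step I expect to be the main obstacle, is to show simultaneously that $F_n$ is \emph{transitive} and that its classes are \emph{o-minimal}; both follow from the density clause Definition~\ref{amenable}(iii). The central observation is that if a convex $[a,b]\subseteq G$ meets $C_i$ in two distinct $E_n$-classes $P\neq P'$, then it must meet \emph{every} intermediate class: since the $E_n$-classes of $C_i$ are convex and densely ordered in their $E_{n-1}$-class, any class $Q$ with $P<Q<P'$ consists of $C_i$-points lying strictly between a $P$-point and a $P'$-point of $[a,b]$, hence inside $(a,b)\subseteq[a,b]$. As there are infinitely many such $Q$, this makes $[a,b]$ at least two-dimensional, contradicting o-minimality; conversely, meeting each $C_i$ in a single class exhibits $[a,b]$ as a finite union of o-minimal convex pieces, hence o-minimal. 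The same observation gives transitivity: if $[x,y]$ and $[y,z]$ each meet $C_i$ in a single class $P,P'$, then $[x,z]$ cannot meet an intermediate $Q$ without placing a $Q$-point into $[x,y]\subseteq P$ or $[y,z]\subseteq P'$, forcing $P=P'$. Thus $F_n$ is an equivalence relation whose classes are exactly the maximal convex o-minimal subsets of $G$, and each class is infinite by the first paragraph.

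Finally I would assemble the presentation. As the $F_n$-classes are convex, $(G/F_n,<)$ is a linearly ordered decomposable set, of dimension $n-1$ since the fibres are one-dimensional. Applying Corollary~\ref{cor:minimal} and Fact~\ref{main theorem} to $G/F_n$ produces a lexicographic presentation $\langle\bar F_1,\dots,\bar F_{n-1}\rangle$ of the quotient of depth $n-1$ (assembling the finitely many lexicographically presented pieces under a finite top relation $\bar F_1$ is routine, the top relation being permitted finitely many classes). Pulling these back along the quotient map, $x\,F_i\,y:\Longleftrightarrow [x]_{F_n}\,\bar F_i\,[y]_{F_n}$, and adjoining $F_n$ gives $\langle F_1,\dots,F_{n-1},F_n\rangle$ on $G$. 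Each $F_i$-class is a union of convex $F_n$-classes, hence convex in $G$, and every quotient order is the induced one, so the presentation is lexicographic; the density required in conditions (iii) and (iv) of Definition~\ref{amenable} is supplied by Proposition~\ref{no successor}. By construction each $F_n$-class is an infinite convex o-minimal set, as required.
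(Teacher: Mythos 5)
Your route is genuinely different from the paper's---the paper never passes to the quotient $G/F_n$; it refines the given presentation inside each $C_i$, replacing the $E_n$-classes by their convex components, and absorbs the leftover points as infima via $\cl^-$---and your opening observation that $\reg(G)=G$ is sound, though you should derive it by applying Lemma~\ref{limit} with $I=G$ rather than $I=\{x\}$: with $I=\{x\}$ that lemma's proof degenerates ($F_x\cap I$ is a single point, so nothing guarantees your $I'$ is nonempty, let alone infinite). The crux of your argument, however, has a genuine gap. You claim that if the hull $[a,b]$ meets each $C_i$ in a single $E_n$-class then it is ``a finite union of o-minimal convex pieces, hence o-minimal.'' The pieces $[a,b]\cap C_i$ are convex in $C_i$, not in $G$---convexity in $G$ of exactly such pieces is what the lemma is trying to establish---and a finite union of o-minimal sets need not be o-minimal: if $[a,b]\cap C_1$ and $[a,b]\cap C_2$ were densely interleaved, each would be a definable subset of $[a,b]$ that is not a finite union of intervals and points (every interval of the union containing two points would meet both pieces), so $[a,b]$ would fail to be o-minimal. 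Excluding interleaving---dense interleaving via the convex o-minimal neighborhoods furnished by $\reg(G)=G$, and infinite discrete alternation via uniform finiteness of decomposable structures---is precisely the content of the paper's Claim (each $E_n$-class has finitely many convex components, with a uniform bound $k$); your proposal asserts the conclusion at the exact point where this work must be done. The same looseness affects the forward direction of your ``central observation'': when you invoke ``contradicting o-minimality,'' the hull is not yet known to be o-minimal, and even for hulls inside a convex o-minimal $I_x$ the step ``meets infinitely many classes, hence contradiction'' needs an actual argument (for instance, that each intermediate class, being a definable convex subset of an o-minimal set, would be an interval of it with endpoints, which density of the class order forbids).

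The assembly step has a second genuine problem: Fact~\ref{main theorem} and Corollary~\ref{cor:minimal} apply to decomposable sets, and you have not shown that $(G/F_n,<)$ is decomposable; ``of dimension $n-1$ since the fibres are one-dimensional'' is not an argument. Worse, it risks circularity: in the paper, decomposability of such quotients (Corollary~\ref{exactness}) is deduced \emph{from} Lemma~\ref{refibered}, so it is not available here. The paper sidesteps the issue by never re-presenting anything: it keeps the levels $1,\dots,n-1$ of the given presentation, splits only the bottom-level classes, and redistributes the finitely many resulting components among finitely many new top-level classes, so the clauses of Definition~\ref{amenable} are inherited. If you want to keep your hull-based relation $F_n$, you should likewise define $F_i$ for $i<n$ by refining the original $E_i$ on $G$ itself (recording which convex component a point lies in), rather than by appealing to a presentation of the quotient.
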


\begin{proof}
Let $\langle E_1,\dots, E_n\rangle$ be a lexicographic presentation of $G$ such
that each $E_1$ class of dimension $n$ are minimal, as guaranteed by
Corollary~\ref{cor:minimal}.

We now restrict the presentation $\langle E_1,\dots, E_n\rangle$ to $\reg(G)$; as we do
not use the original presentation on $G$ in what follows, we continue to use
the same notation.

\begin{claim}
There is a presentation $\langle E'_1, \dots, E_n'\rangle$ of $\reg(G)$ refining
$\langle E_1, \dots , E_n\rangle$ such that the $E'_n$-classes are precisely the
convex (connected) components of the original $E_n$-classes.
\end{claim}

\begin{proof}
Define a new equivalence relation $E_{n+1}$ on the $E_n$ classes by
\[
aE_{n+1}b \hbox{\ if and only if\ }\forall x\, \left(a\leq x\leq b\Rightarrow xE_n a\right).
\]

By the definition of $\reg(G)$ and o-minimality, each $E_n$-class
contains only finitely many $E_{n+1}$ classes, all of which are
infinite convex o-minimal intervals. By uniform finiteness, there
is an absolute bound $k$ for the number $E_{n+1}$-classes in a
single $E_n$ class. By separating each of the $E_{n+1}$-classes
we can partition $\reg(G)$ into the sets $C_1, \dots, C_k$ such that
for all $1\leq i\leq k$ we have
\begin{itemize}
\item[a.] $aE_{n+1} b \Rightarrow a\in C_i\Leftrightarrow b\in C_i$

\item[b.] $a E_n b\wedge a,b\in C_i \Rightarrow aE_{n+1} b$.
\end{itemize}

We then satisfy the claim by defining $E_j'$ for $j<n$  via
\[
aE_j' b \hbox{\ if and only if\ } aE_j b \wedge \bigwedge_{i\leq k} a\in C_i\Leftrightarrow b\in C_i
\]
and setting
$$aE_n'b\Leftrightarrow aE_{n+1}b.$$
\end{proof}

Let $\langle F_1, \dots F_n\rangle$ be the presentation of $G$ given by
\[
a F_i b \;\Leftrightarrow\;\exists x,y\in \reg(G) \left( xE_i' y \wedge
a\in \cl^-\left(\left[x\right]_{E_n'}\right) \wedge b\in
\cl^-\left(\left[y\right]_{E_n'}\right)\right)
\]
for each $i\leq n$.
By Lemma~\ref{limit} we see that $\langle F_1, \dots F_n\rangle$ is indeed a
presentation of $G$ and it follows easily that
this presentation of $G$ has the desired properties.
\end{proof}

We can now prove the main result of this section.

\begin{theorem}\label{o-minimal subgroup}
Let $(G,<,\cdot)$ be an ordered group that is decomposable. Then
there is a convex normal o-minimal subgroup $H\unlhd G$.
\end{theorem}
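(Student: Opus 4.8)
The plan is to take $H$ to be the $F_n$-class of the identity $e$ in the lexicographic presentation $\langle F_1,\dots,F_n\rangle$ supplied by Lemma~\ref{refibered}. By that lemma $H=[e]_{F_n}$ is an infinite convex o-minimal set; moreover, by its construction there as a maximal convex (connected) piece, it is the \emph{maximal} convex o-minimal subset of $G$ containing $e$ (note that the union of two convex o-minimal subsets sharing a point is again convex and o-minimal, so this notion is unambiguous). I shall use throughout that the order is translation-invariant---this is already implicit in the proof of Proposition~\ref{no successor}---so that each left translation $L_g$ is an order-automorphism, hence a homeomorphism of the order topology carrying convex o-minimal sets to convex o-minimal sets.

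First I would show that $H$ is a subgroup. For $a\in H$ the set $a^{-1}H=L_{a^{-1}}(H)$ is convex and o-minimal (being the image of $H$ under the order-automorphism $L_{a^{-1}}$) and it contains $a^{-1}a=e$; by maximality of $H$ it follows that $a^{-1}H\subseteq H$. Thus $e\in H$, and $a,b\in H$ force $a^{-1}b\in a^{-1}H\subseteq H$, which is the subgroup criterion. Being a subgroup, $H$ contains with each $x>e$ its inverse $x^{-1}<e$ (apply $L_{x^{-1}}$ to $x>e$), so $e$ lies in the interior of the interval $H$; hence $H$ is a neighbourhood of $e$, and as a convex o-minimal set it is definably connected.

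The heart of the argument, and the step I expect to be the main obstacle, is \emph{normality}. Fix $g\in G$ and consider the inner automorphism $c_g\colon x\mapsto gxg^{-1}$. The point is that $c_g$ is continuous at $e$---indeed a homeomorphism---so that, $H$ being a neighbourhood of $e$, there is an open interval $V$ with $e\in V\subseteq H$ and with $c_g(V)\subseteq H$ open. Then $gHg^{-1}\cap H$ is a subgroup of $H$ with nonempty interior, hence an open, and therefore definably clopen, subgroup of the definably connected group $H$; this forces $gHg^{-1}\cap H=H$, i.e. $H\subseteq gHg^{-1}$. Applying the same to $g^{-1}$ yields the reverse inclusion, so $gHg^{-1}=H$ and $H\unlhd G$.

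What must be justified for this last step is precisely that $G$, with its order topology, is a \emph{topological group}, so that inner automorphisms (equivalently, right translations together with inversion) are continuous; left-invariance of the order only gives continuity of left translations for free. In our setting $G$ is a group definable in an o-minimal expansion of a real closed field, and I would appeal to the fact that such a group carries a definable topology making it a topological group, together with the coincidence of this topology with the order topology (the same coincidence underlying the homeomorphism assertion in the introduction). I note finally that if one knows the order to be \emph{bi-invariant}, then normality is immediate and the topological detour can be avoided: both $L_g$ and $R_g$ are order-automorphisms, the relation $F_n$ is invariant under each, and $a\,F_n\,e$ then gives $gag^{-1}\,F_n\,e$ directly, so that $H=[e]_{F_n}$ is normal.
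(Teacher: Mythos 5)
Your strategy hinges entirely on the claim that $H:=[e]_{F_n}$ is the \emph{maximal} convex o-minimal subset of $G$ containing $e$, and this is a genuine gap: Lemma~\ref{refibered} does not assert maximality, and its construction does not deliver it. The classes produced there are obtained by taking $\cl^-$ of the convex components of the classes of some \emph{given} presentation restricted to $\reg(G)$, and nothing prevents the given classes from being too small to begin with. Concretely, let $G=\mathbb{R}^2$ with addition and the lexicographic order, and take the lexicographic presentation whose $E_1$-classes are $\{g: g<(0,0)\}$ and $\{g: g\geq (0,0)\}$, and whose $E_2$-classes are the full fibers $\{a\}\times\mathbb{R}$ for $a\neq 0$ together with the two half-fibers $\{0\}\times(-\infty,0)$ and $\{0\}\times[0,\infty)$. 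This satisfies every clause of Definitions~\ref{amenable} and~\ref{lexpresentdef} (within each $E_1$-class the $E_2$-classes are densely ordered, of order type $(-\infty,0]$ and $[0,\infty)$ respectively), and the construction in Lemma~\ref{refibered} returns it essentially unchanged, since each $E_2$-class is already convex and $\cl^-$ adds nothing. So the class of $e$ can be $\{0\}\times[0,\infty)$: infinite, convex, o-minimal, but not maximal (the full fiber is strictly larger), not symmetric about $e$, and not a subgroup. Your subgroup step then fails exactly where maximality is invoked: for $a=(0,5)$ one has $a^{-1}H=\{0\}\times[-5,\infty)\not\subseteq H$. Maximality of a convex o-minimal set around $e$ is not a bookkeeping point---it is essentially the content of the theorem. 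The paper's proof earns it by a different route: starting from the (possibly lopsided) fiber of $e$, it iterates the squaring map on rays, setting each new set to be the union of fibers meeting $\overline{\sq(\,\cdot\,)}$, uses the fact that $G/E_n$ has no infinite discrete chains to get stabilization, and only then \emph{closes under inverses} to obtain the convex one-dimensional subgroup $N$; that inverse-closure step is precisely what repairs the asymmetry your $H$ may have.

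The normality step has a further, independent problem. Theorem~\ref{o-minimal subgroup} lives in the general decomposable setting of Section~\ref{section2}; the group is not assumed definable in an o-minimal structure there, so the o-minimal theory of definable groups (a definable group topology) is not available. Moreover, the asserted coincidence of such a topology with the order topology is false in the relevant examples: in lexicographic $\mathbb{R}^2$ every fiber is clopen in the order topology, which is thus a disjoint union of lines rather than the plane. What actually saves normality is the bi-invariance built into the paper's notion of ordered group (used explicitly in Proposition~\ref{order preserving action}): conjugation is an order-automorphism, so $aNa^{-1}$ is again a convex one-dimensional subgroup, and $N\cap aNa^{-1}=N$ follows by o-minimality---this is the paper's two-line argument. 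Your closing remark gestures at this, but it asserts that the relation $F_n$ itself is invariant under left and right translations, which is false (translating a class generally does not give a class); bi-invariance only gives invariance of the \emph{family} of convex o-minimal sets, which yields normality of $H$ only after maximality is in hand.
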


\begin{proof}
By Lemma \ref{refibered} we may assume that we are given  a
lexicographic decomposition $\langle E_1, \dots E_n\rangle$ of $G$ such that each
$E_n$-class is an infinite o-minimal convex subset of $G$.
Throughout this proof, we refer to these classes as the
\emph{o-minimal fibers of $G$}.

Let $F_0$ be the $E_n$-class containing the identity element $e$ and denote by $F_0^+$ the
positive elements in $F_0$. We refer to a convex subset of $G$ that has $e$ as a left endpoint as a \emph{ray of $G$}.
For a ray $X$ put $\sq(X):=\{x^2| x \in X\}$.

\begin{claim}\label{dimSq}
Let $X:=I_1\cup \dots \cup I_k$ be a ray such that each $I_j$ for $j\leq k$ is a convex
o-minimal set, and let $\overline{\sq(X)}$ be the convex closure of
$\sq(X)$. Then $\overline{\sq(X)}\setminus \sq(X)$
is finite. In particular, $\overline{\sq(X)}$ has dimension~1.
\end{claim}

\begin{proof}
Let $a\in \overline{\sq(X)}\setminus \sq(X)$, and let $\sqrt{a}:=\{y\in X \mid y^2<a\}$.
It is easy to show that $\sqrt{a}$ has no supremum in $X$. Indeed, were $b\in X$ the
supremum of $\sqrt{a}$, then $b^2>a$ and thus for some $w<b$ we have $b\cdot x =a$ Then, for every
$u\in \sqrt a$ with $u>w$ there is some $v<b$ with $u\cdot v=a$. Then the square of the greater of $u$ and $v$
is greater than $a$, which is impossible. Hence by o-minimality and convexity $\sqrt{a}$ must be equal to
$I_1\cup \dots \cup I_j$ for some $j\leq k$, which implies that there are at most $k-1$
elements of $\overline{\sq(X)}\setminus \sq(X)$.
\end{proof}

Let $F_1$ be the set of positive elements in fibers
containing a point in $\overline{\sq(F_0)}$, which by Claim~\ref{dimSq}
has dimension~1. By convexity it must be equal to
a disjoint union of finitely many o-minimal fibers, only the first of which
may not be a complete $E_n$-class.

For $n\geq 1$ we recursively define

\[
F_{n+1}:= \left\{x>0 \mid \exists y\  \left(y\in \overline{\sq(F_n)}\wedge xE_n y\right)\right\}.
\]

An easy induction shows for all $n$ that $F_n$ is a finite union of o-minimal
fibers (together with and the positive elements in the fiber of the identity $e$). But
since $G/E_n$ is a decomposable ordered structure, it cannot have
infinite discrete chains so for some $n$ we have $F_n=F_{n+1}$.

With $F_n=F_{n+1}$ we have $\sq(F_n)\subseteq F_n$, and by convexity it follows that
$F_n^+\cdot F_n^+\subset F_n^+$. Closing $F_n^+$ under inverses we obtain a
one-dimensional convex subgroup $N$ of $G$.
Arguing as in the proof of Claim~\ref{dimSq} above,
we see that $N$ is a convex o-minimal subgroup of $G$.

It remains to prove that $N$ is normal. For this, observe that $aNa^{-1}$ is a convex
subgroup of $G$ of dimension 1 and thus $N_a:=N\cap aNa^{-1}$ is a convex
subgroup of $N$. By o-minimality $N_a=N$, whence $N$ is normal.
\end{proof}

The following is an easy and important consequence of Theorem~\ref{o-minimal subgroup}.

\begin{corollary}\label{exactness}
Let $G$ be a decomposable ordered group. Then there is a normal o-minimal
ordered subgroup $N$ of $G$ and a decomposable ordered group $H$ such that
\[
1\rightarrow N\rightarrow G\rightarrow H\rightarrow 1
\]
is an exact sequence of ordered groups, where each of the homomorphisms is a
homomorphism of ordered groups.
\end{corollary}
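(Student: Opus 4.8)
The plan is to apply Theorem~\ref{o-minimal subgroup} to obtain a convex normal o-minimal subgroup $N\unlhd G$, to set $H:=G/N$, and then to carry out two tasks: first, to equip $H$ with the structure of an ordered group for which the canonical projection becomes a homomorphism of ordered groups; and second, to verify that $H$ is again decomposable. Granting these, the exactness of $1\to N\to G\to H\to 1$ is purely formal, and the inclusion $N\hookrightarrow G$ is trivially a homomorphism of ordered groups since $N$ carries the order inherited from $G$.

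For the first task I would use convexity in an essential way. Because $N$ is convex, every coset $aN$ is convex in $G$; two distinct cosets, being convex and disjoint, cannot interleave, so exactly one of ``$aN$ lies entirely below $bN$'' or the reverse holds. Hence the prescription $aN<_H bN$ if and only if $a<b$ is well defined and total, and, since $N$ is moreover normal, $(H,<_H,\cdot)$ is an ordered group. By construction the projection $\pi\colon G\to H$ is an order-preserving surjective homomorphism, i.e.\ a homomorphism of ordered groups, and its kernel is $N$; so the displayed sequence is exact in the category of ordered groups.

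The substantive point is that $H$ is decomposable. Here I would invoke Lemma~\ref{refibered} to fix a lexicographic presentation $\langle E_1,\dots,E_n\rangle$ of $G$ whose $E_n$-classes are infinite convex o-minimal sets, and exploit that $N$ is one-dimensional, convex and o-minimal: each coset $aN$ is then, by translation, a convex o-minimal set of dimension~$1$, so the coset relation $E_N$ is a \emph{convex} definable equivalence relation whose classes are o-minimal. The goal is to produce a lexicographic presentation of $G$ whose finest relation is exactly $E_N$; once this is in hand, erasing the bottom level exhibits $\langle E_1,\dots,E_{n-1}\rangle$ as descending to a presentation of $G/E_N=H$ of depth $n-1$, where convexity of the cosets guarantees that the induced orders on the successive quotients of $H$ are unambiguously defined, and conditions (i)--(iv) of Definition~\ref{amenable} for $H$ are inherited from those for $G$ together with additivity of dimension, which gives $\dim H=\dim G-\dim N=n-1$.

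I expect the main obstacle to be precisely this descent, namely arranging $E_N$ as the bottom level of a presentation: the equivalence relations supplied by the abstract decomposition of $G$ need not be invariant under the group translations that define the cosets of $N$, so one cannot simply read off that $E_N$ refines a coarsening of $E_n$ compatible with the $E_{n-1}$-classes. The way around this is to argue as in Lemma~\ref{refibered}, transporting the o-minimal fibre structure of $G$ through $\pi$ and using convexity of $N$ to replace the given finest relation by $E_N$; this is the one place where genuine care is required. Finally, I note that in the setting of interest, where $G$ is definable in an o-minimal expansion of a real closed field, definable choice furnishes a definable transversal for $N$, so $H$ is itself definable with a definable order and its decomposability follows directly from the results of \cite{OnSt}, which makes the corollary ``easy'' as asserted.
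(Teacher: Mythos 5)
Your proposal follows essentially the same route as the paper's own proof: Theorem~\ref{o-minimal subgroup} supplies the convex normal o-minimal subgroup $N$, convexity of the cosets gives the ordered group structure on $H=G/N$ and makes the projection an ordered-group homomorphism, and decomposability of $H$ is obtained from the re-fibering of Lemma~\ref{refibered}, with your definable-choice alternative matching the paper's parallel remark that under elimination of imaginaries one can instead quote Theorem 4.1 of \cite{OnSt}. The descent difficulty you flag (making the coset relation $E_N$ the finest level of a lexicographic presentation) is precisely the step the paper also leaves implicit, delegating it to ``the proof of Theorem~\ref{o-minimal subgroup}, in particular the re-fibering given by Lemma~\ref{refibered}.''
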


\begin{proof}
Theorem \ref{o-minimal subgroup} provides the existence of the normal
o-minimal subgroup $N$, and by convexity $G/N$ is an ordered group.
If the ground theory eliminates imaginaries, the fact that both groups are
decomposable follows from Theorem 4.1 in \cite{OnSt}. In our case, the result
follows by the proof of Theorem \ref{o-minimal subgroup}, in particularly from the
re-fibering given by Lemma \ref{refibered}.
\end{proof}

And from this, an easy induction shows the following.
\begin{theorem}\label{decomposable}
Every ordered decomposable group $G$ is supersolvable and
homeomorphic to a finite product of o-minimal structures. Even more,
any ordered decomposable group $(G, \odot, <_G)$ has a supersolvable
decomposition
$$H_0\unlhd H_1\unlhd H_2\unlhd\dots \unlhd H_n=G$$
such that $H_{i+1}/H_i$ is an o-minimal group, $H_i$ is a normal subgroup of $G$, and the order $<_G$ is
given by the lexicographic order induced by the order on the
quotients $H_{i+1}/H_i$.
\end{theorem}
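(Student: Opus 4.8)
The plan is to induct on $\dim(G)$, at each stage using Corollary~\ref{exactness} to split off a one-dimensional normal subgroup and then pulling the decomposition of the quotient back to $G$. Since $G$ is dense (Proposition~\ref{no successor}) it is infinite, so $\dim(G)\geq 1$. In the base case $\dim(G)=1$ the group $G$ is itself o-minimal, and the chain $\{e\}\unlhd G$ already witnesses the conclusion: there is a single o-minimal factor $G/\{e\}=G$, and both the lexicographic order and the product topology on a one-factor product are just those of $G$. So suppose $\dim(G)=n>1$ and that the statement holds for every decomposable ordered group of dimension $<n$.

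First I would apply Corollary~\ref{exactness} to obtain an exact sequence $1\to N\to G\xrightarrow{\pi} H\to 1$ of ordered groups in which $N$ is a convex normal o-minimal subgroup and $H$ is a decomposable ordered group. As $N$ is o-minimal, $\dim(N)=1$, and additivity of dimension yields $\dim(H)=n-1$; hence the induction hypothesis applies to $H$, furnishing a chain $\{e_H\}=K_0\unlhd K_1\unlhd\cdots\unlhd K_{n-1}=H$ in which each $K_j$ is normal in $H$, each quotient $K_{j+1}/K_j$ is o-minimal, and $<_H$ is the lexicographic order induced by the orders on these quotients.

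Next I would set $H_0:=\{e\}$, $H_1:=N$, and $H_{j+1}:=\pi^{-1}(K_j)$ for $1\leq j\leq n-1$, so that $H_1=\pi^{-1}(K_0)$ and $H_n=\pi^{-1}(H)=G$. Each $H_{j+1}$ is normal in $G$, being the $\pi$-preimage of a subgroup normal in $H$, and $H_0,H_1$ are normal as well; thus every term of the chain is normal in $G$. The third isomorphism theorem gives $H_{j+2}/H_{j+1}\cong K_{j+1}/K_j$, which is o-minimal, while $H_1/H_0=N$ is o-minimal; as one-dimensional ordered groups are abelian, all successive quotients are abelian o-minimal groups and $G$ is supersolvable in the stated sense.

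It remains to recognize the order and the topology, and this is the only step requiring genuine care. Because $N$ is convex in $G$ and $\pi$ is a homomorphism of ordered groups, for $g,g'\in G$ one has $g<_G g'$ precisely when $\pi(g)<_H\pi(g')$, or when $\pi(g)=\pi(g')$ and $g<_G g'$ inside the convex coset $gN$, the latter comparison being controlled by the order on $N=H_1/H_0$; unwinding the inductive lexicographic order on $H$ through the identifications $H_{j+1}=\pi^{-1}(K_j)$ then exhibits $<_G$ as the lexicographic order with coordinates running from $G/H_{n-1}$ (most significant) down to $H_1/H_0$ (least significant), exactly as claimed. For the topology, the presentation of Lemma~\ref{refibered}, applied to $G$ over $H$ and then inductively within $H$, realizes $G$ in its subset topology as an iterated fibration with convex o-minimal fibers; choosing definable coset representatives at each stage identifies this, up to homeomorphism, with the product $\prod_{j}H_{j+1}/H_j$ of o-minimal structures (a topology that is compatible with, though distinct from, the lexicographic order topology). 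The purely group-theoretic pullback is routine; the point to watch is that the convexity of $N$ is what makes the $N$-coordinate the finest in the lexicographic order, so that passing to $G/N$ leaves the inductively obtained order on $H$ intact.
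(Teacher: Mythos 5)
Your proof is correct and follows essentially the same route as the paper, which derives Theorem~\ref{decomposable} from Corollary~\ref{exactness} by exactly the ``easy induction'' you carry out: split off the convex normal o-minimal subgroup $N$, apply the inductive hypothesis to $G/N$, and pull the chain back through $\pi^{-1}$, with convexity of the terms giving the lexicographic order and the refibering of Lemma~\ref{refibered} (plus definable sections) giving the homeomorphism. Your write-up simply makes explicit the details the paper leaves to the reader.
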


\section{Cohomology of groups definable in o-minimal theories}\label{consequences in o-minimal}\label{section3}

In this section we
survey some of the definable
group cohomology that we shall need; this material was introduced in \cite{Ed} and further developed by the first
author in her M.Sc. thesis written under the supervision of the second author.
Throughout this section when we state that a group is interpretable or definable, we mean, if not stated  explicitly otherwise, that it is interpretable or definable in an o-minimal structure.

A reader familiar with group extensions and the cohomology
of groups may recall that for a group $G$ and normal subgroup $H\unlhd G$, the existence of a section from $G/H$ to $G$ makes a big difference in whether or not $G$ is an extension of $G/H$ by $H$. This hypothesis must be included in the definition of a group
extension in order for there to be an isomorphism between the
respective cohomology groups and the group extensions. With this in mind, we
begin with the following theorem of
Edmundo (see Corollary 3.11 in \cite{Ed}).

\begin{fact}\label{edmundo}
Let $U$ be a definable group in an o-minimal structure and $N\unlhd U$ a
definable normal subgroup. Then there is a definable group $H$,
a definable exact sequence
\[
0\rightarrow N\rightarrow U\rightarrow H\rightarrow 1
\]
and a definable section
\[
s: H\rightarrow U.
\]
\end{fact}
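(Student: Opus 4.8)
The plan is to reduce the statement to two standard features of o-minimal expansions of real closed fields: elimination of imaginaries and definable choice (equivalently, the availability of definable Skolem functions). Elimination of imaginaries will produce the quotient group $H$ together with the definable projection, and definable choice will produce the section. The key conceptual point to keep in mind is that the section $s$ is required only to be a set-theoretic right inverse of the projection, not a group homomorphism; this is exactly why it can be built unconditionally, and is what makes the second cohomology group the correct object for measuring the failure of the extension to split.

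First I would construct $H$ and the exact sequence. The coset equivalence relation $E(x,y) \Leftrightarrow x^{-1}y \in N$ is definable. By elimination of imaginaries there is a definable set $H$ and a definable surjection $\pi\colon U \to H$ whose fibers are exactly the $E$-classes, i.e.\ the cosets of $N$. Here normality enters: because $N\unlhd U$, the rule $\pi(u)\cdot\pi(v):=\pi(uv)$ is well defined, since $uN=u'N$ and $vN=v'N$ force $uvN=u'v'N$; and it is definable, being induced through the definable map $\pi$ by the definable operation on $U$. This endows $H$ with a definable group structure for which $\pi$ is a definable surjective homomorphism with kernel $N$, yielding the definable exact sequence
\[
0\rightarrow N\rightarrow U\xrightarrow{\ \pi\ } H\rightarrow 1.
\]

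It then remains to produce the definable section. The family $\{\pi^{-1}(h)\}_{h\in H}$ is a definable family of nonempty definable subsets of $U$ indexed by the \emph{definable} set $H$, so definable choice applies directly and supplies a definable function $s\colon H\to U$ with $s(h)\in\pi^{-1}(h)$ for every $h\in H$; equivalently $\pi\circ s=\id_H$. This $s$ is the required definable section, and no further compatibility with the group operations need be checked.

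The only genuine obstacle is definability rather than existence: a set-theoretic section is free by the axiom of choice, so the entire content lies in arranging that both the quotient map and the section be definable. The first is handled by elimination of imaginaries and the second by definable choice, and both hold because we work throughout in an o-minimal expansion of a real closed field, hence in a structure with definable Skolem functions. (Alternatively, one could invoke the definable manifold structure on $U$ from the theory of groups definable in o-minimal structures to realize the transversal directly, but the route via definable choice is the most economical.)
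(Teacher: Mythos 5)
The paper does not actually prove this statement: it is imported as a black box, citing Corollary~3.11 of Edmundo's paper \cite{Ed}, so there is no in-paper argument to compare yours against. Judged on its own merits, your proof is correct \emph{in the setting the paper actually works in}. The paper's standing convention (end of Section~1) is that all groups are definable in an o-minimal expansion of a real closed field, and such structures have definable Skolem functions, hence definable choice for definable families and elimination of imaginaries. Granting that, your two steps are both sound: elimination of imaginaries (or, more directly, definable choice applied to the coset equivalence relation, which yields a definable set of coset representatives together with the definable projection) produces $H$ with its induced definable group operation, and definable choice applied to the fibers of $\pi$ produces the section; your observation that $s$ need only be a set-theoretic right inverse, not a homomorphism, is exactly the right conceptual point and is why the statement holds unconditionally.

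The one caveat worth flagging is a mismatch in generality between your hypotheses and the statement as literally written. The Fact says ``a definable group in an o-minimal structure,'' whereas your argument invokes elimination of imaginaries and definable choice, which can fail in an arbitrary o-minimal structure (e.g.\ in a pure dense linear order); they are theorems for o-minimal expansions of ordered groups --- a fortiori of real closed fields --- by the standard result in van den Dries's book, Chapter~6. So your proof covers the case the paper needs (and the case in which Edmundo's result is itself established), but it does not prove the statement at the level of generality of its literal wording; if one wanted that, one would have to replace global definable choice by an argument exploiting the definable group structure itself, which is closer to what Edmundo does. It would strengthen your write-up to state this hypothesis explicitly at the outset rather than only in the closing sentence.
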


We now make a series of definitions.

\begin{definition}\label{3.2}
Let $H$ be a definable group in an o-minimal structure. A \emph{definable} $H$-{\em module}
$\left(N,\gamma\right)$ is a $H$-module such that $N$ is a definable abelian group and the action map  $\gamma:H\times N\rightarrow N$ given by $\gamma\left(x,a\right)\coloneqq \gamma\left(x\right)\left(a\right)$ is definable. As usual $\gamma$ is usually called an {\em action} of $H$ on $N$ and $\gamma$ is {\it trivial\/} if  $\gamma\left(g\right)\left(a\right)=a$ for all $g\in H$ and $a\in N$.
\end{definition}

Observe that an action $\gamma$ as above induces homomorphism
$\gamma:H\rightarrow \aut\left(N\right)$ from $H$ to the {\em group of all
definable automorphisms of $N$\/}.

\begin{definition}
Two actions $\gamma_{1}$ and $\gamma_{2}$ of $H$ on $N$  are {\em definably equivalent\/} if there is a definable group automorphism $\psi$ of $H$ such that $\gamma_{1}=\gamma_{2}\circ\psi$.
\end{definition}

\begin{definition}
Let $H$ be a definable group and let $N$ be a definable $H$-module; equivalently, let
$\gamma:H\rightarrow \aut(N)$ be a definable action of $H$ on $N$.
We say that
$U$ is \emph{an extension of $H$ by $N$ via $\gamma$\/} if there
is an exact sequence
\[
1\rightarrow N\xrightarrow{i} U\xrightarrow{\pi} H\rightarrow 1
\]
such that for all $h\in H$, $g\in U$ with $\pi (g)=h$,
and $n\in N$ we have
\[
\gamma(h)(n):=gng^{-1}.
\]
We often use the notation $(U,i,\pi)$ to denote an extension of $H$ by $N$ as above, and also
write $\gamma(h)(n)$ as $n^{\gamma(h)}$.
\end{definition}

\subsection{The cochain complex for a $G$-module and cohomology}

We here recall some fundamental properties of o-minimal
group cohomology as defined in \cite{Ed}. Let $H$ be an o-minimal definable group, and let
$\left(N,\gamma\right)$ be an $H$-module with action $\gamma$. To
distinguish the operations on $H$ and $N$, we write the operation on $H$
multiplicatively while the operation on $N$ is written additively.

The next definitions in the o-minimal case can be found in \cite{Ed}.

\begin{definition}
For $n\in\mathbb{N}\setminus \{0\}$, we denote by
$C^{n}\left(H,N,\gamma\right)$ the (abelian) group consisting of the set of all
functions from $H^{n}$ to $N$, and for $n=0$ we put
$C^{0}\left(H,N\right)=N$. The elements of $C^{n}\left(H,N,\gamma\right)$ are called $n$-\emph{cochains}. If the action
$\gamma$ is clear from the
context we sometimes write $C^{n}\left(H,N\right)$ instead of $C^{n}\left(H,N,\gamma\right)$.
\end{definition}

\begin{definition}\label{D:complejo}\label{H}
The \emph{coboundary function} $\delta:C^{n}\left(H,N,\gamma\right)\rightarrow C^{n+1}\left(H,N,\gamma\right)$ is defined by
\begin{align*}
\delta\left(f\right)\left(g_{1},\ldots,g_{n+1}\right) = &\; \gamma\left(g_{1}\right)\left(f\left(g_{2},\ldots,g_{n+1}\right)\right)\\ &+
\sum_{i=1}^{n}\left(-1\right)^{i}f\left(g_{1},\ldots,g_{i}g_{i+1},\ldots,g_{n+1}\right)\\
& +\left(-1\right)^{n+1}f\left(g_{1},\ldots,g_{n}\right).
\end{align*}
\end{definition}

\noindent It is clear that ${\delta\left(f\right)}\in {{C^{n+1}}\left(H,N,\gamma\right)}$ and
as is standard $\delta\circ\delta=0$. Thus $\left(C,\delta\right)$ is a {\em definable cochain complex\/}
and we have

\begin{definition}
For $n\geq0$ the \emph{cohomology groups of the complex} $\left(C,\delta\right)$ are given by
$$H^{n}\left(H,N\right)=H^{n}\left(H,N,\gamma\right)
= \frac{Z^{n}\left(H,N,\gamma\right)}{B^{n}\left(H,N,\gamma\right)},$$
where
\[
Z^{n}\left(H,N,\gamma\right)={\ker\delta:C^{n}\left(H,N,\gamma\right)\rightarrow C^{n+1}\left(H,N,\gamma\right)},\]
$B^{0}{\left(H,N,\gamma\right)}= {\left\{ 0\right\}}$ and for $n>0$
\[B^{n}\left(H,N,\gamma\right)={\textrm{im}\,{\delta:C^{n-1}\left(H,N,\gamma\right)\rightarrow C^{n}\left(H,N,\gamma\right)}}.
\]
The elements of $Z^{n}\left(H,N,\gamma\right)$ are the \emph{cocycles} and the elements of $B^{n}\left(H,N,\gamma\right)$ \emph{coboundaries} of the cochain complex.
\end{definition}

By standard methods in group cohomology---employed by
Edmundo in \cite{Ed} for o-minimal group cohomology---it can be shown
that if the action $\gamma\colon H\to \aut(N)$ is fixed, then the group of
classes of group extensions of $H$ by $N$ is isomorphic to group of classes of 2-cocycles in
$H^2(H,N, \gamma)$. Even
more, given such a 2-cocycle one can recover the group operation on the extension.
%
In the o-minimal context we have the important result from  \cite{Ed}.

\begin{fact}\label{ExtensionsAndCohomology}
Let $H,N$ be groups definable in an o-minimal structure, and let
$\gamma$ be a definable group action of $H$ on $N$. Then the group of equivalence classes
of group extensions of $H$ by $N$ via $\gamma$ (modulo isomorphism)
is isomorphic to the the group of equivalence classes of definable  2-cocycles in $H^2(H,N,
\gamma)$. In fact, given a definable 2-cocycle $c$ one recovers the
group operation by
\[(a,g)\cdot(b,h):= (a+b^{\gamma(g)}+c(g,h), gh)
\]
for all $(a,g), (b,h)\in N\times H$.
\end{fact}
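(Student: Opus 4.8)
The plan is to reproduce the classical Schreier correspondence between group extensions and the second cohomology group, taking care that every object and map produced along the way is \emph{definable}; the one genuinely new ingredient over the purely algebraic argument is the availability of definable sections supplied by Fact~\ref{edmundo}. Concretely, I would exhibit two mutually inverse maps: one sending a definable extension to the class of a definable $2$-cocycle, and one sending a definable $2$-cocycle to the extension whose multiplication is the displayed formula. The group of extension classes carries the Baer-sum operation, and the target carries addition in $H^2(H,N,\gamma)$; the final task is to check the bijection respects these.

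For the forward map, start with a definable extension $1\to N\xrightarrow{i}U\xrightarrow{\pi}H\to 1$ realizing $\gamma$. By Fact~\ref{edmundo} I would choose a \emph{definable} section $s\colon H\to U$ with $\pi\circ s=\id_H$, normalized (definably, on the identity class only) so that $s(1_H)=1_U$. Put $c(g,h):=i^{-1}\bigl(s(g)\,s(h)\,s(gh)^{-1}\bigr)$; exactness shows the argument lies in $i(N)$, and definability of $s$, of the operations on $U$, and of $i$ makes $c$ a definable $2$-cochain. A direct computation using associativity in $U$ together with the identity $gng^{-1}=\gamma(\pi(g))(n)$ defining the action shows that $\delta c=0$ in the sense of Definition~\ref{D:complejo}, so $c\in Z^2(H,N,\gamma)$. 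Replacing $s$ by another definable normalized section alters $c$ by $\delta b$ for a definable $1$-cochain $b$, so the class $[c]\in H^2(H,N,\gamma)$ is well defined; and a definable isomorphism of extensions restricting to the identity on $N$ and $H$ transports one definable section to another, so definably isomorphic extensions yield cohomologous definable cocycles.

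For the backward map, given a definable $2$-cocycle $c$ (taken normalized within its class, which can be arranged definably) set $U:=N\times H$ with the operation displayed in the statement. The identity $\delta c=0$ is precisely associativity, while normalization yields the identity element $(0,1_H)$ and definable inverses; since $N$, $H$, $\gamma$ and $c$ are all definable, $U$ is a definable group, and $i\colon a\mapsto(a,1_H)$, $\pi\colon(a,g)\mapsto g$ give a definable extension inducing exactly $\gamma$. The canonical definable section $g\mapsto(0,g)$ recovers $c$, so the two constructions are mutually inverse on equivalence classes. It then remains to verify that both maps are homomorphisms — matching the Baer sum of extensions with addition of cocycle classes — whence the bijection is a group isomorphism.

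The main obstacle is ensuring definability at the single step where the classical argument is non-constructive: passing from an extension back to a cocycle requires a \emph{definable} section, and in an arbitrary structure no such section need exist. This is exactly the difficulty removed by Fact~\ref{edmundo}, so I would lean on it at precisely this point. A secondary point demanding care is aligning the two notions of equivalence — definable isomorphism of extensions over $N$ and $H$ versus definably cohomologous cocycles — which again reduces to transporting definable sections along a definable isomorphism; once that is in place, all remaining verifications are the routine identities of Schreier theory, now carried out in the definable category.
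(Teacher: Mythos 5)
Your proposal is correct and follows exactly the route the paper indicates: the paper states this as a Fact imported from Edmundo's work \cite{Ed}, remarking only that it follows ``by standard methods in group cohomology'' carried out definably, which is precisely your reconstruction of Schreier theory in the definable category with Fact~\ref{edmundo} supplying the definable section at the one non-constructive step. Your identification of that step as the sole genuinely o-minimal ingredient, and your cocycle formula $c(g,h)=i^{-1}\bigl(s(g)\,s(h)\,s(gh)^{-1}\bigr)$, match the construction the paper itself uses elsewhere (in its commented-out Fact on extensions induced by definable sections), so there is nothing to add beyond the routine Baer-sum verification you correctly flag as standard.
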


We now make an easy observation that allows us to move back and forth
between the definable group context and the definable ordered group
context.

\begin{proposition}\label{order preserving action}
Let $G$ be an ordered group and $N$ be a convex normal subgroup,
both definable in an o-minimal structure. Then $G$ is an extension
of $G/N$ by $N$ with an action given by a morphism from $G/N$ into
the \emph{order preserving} automorphisms of $H$.
\end{proposition}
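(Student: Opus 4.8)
The plan is to verify three things in turn: that $G/N$ is itself an ordered group, that the sequence $1\to N\to G\to G/N\to 1$ is a genuine definable extension in the sense of the definition above, and that the associated conjugation action lands in the order-preserving automorphisms of $N$. The paper itself flags this as an easy observation, so none of the steps should be deep; the only points demanding care are definability and the well-definedness of the action as a map out of the quotient.

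First I would check that convexity of $N$ makes the quotient an ordered group. Because left translation is an order-automorphism of $G$, each coset $gN$ is convex, and since $N$ is normal these convex cosets partition $G$ into disjoint order-blocks. For distinct cosets $gN\neq hN$, convexity and disjointness force either every element of $gN$ to lie below every element of $hN$ or conversely, so one may set $\pi(g)<\pi(h)$ accordingly; this is independent of the choice of representatives and yields a linear order on $G/N$ for which $\pi$ is order-preserving. Compatibility of this order with the quotient group operation is inherited directly from $G$, and definability of $G/N$ together with its order stays within the o-minimal structure (using definable choice to pick representatives, or the uniformly definable convex order on cosets).

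Next I would invoke Fact~\ref{edmundo}: since $N$ is a definable normal subgroup of the definable group $G$, Edmundo's theorem supplies a definable section $s\colon G/N\to G$ of $\pi$. This is exactly what is needed to regard $G$ as an extension of $G/N$ by $N$ in the sense of the definition above, with associated action $\gamma$ given by conjugation, $\gamma(\pi(g))(n)=gng^{-1}$.

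The final step, which carries the actual content of the statement, is to observe that each such conjugation is order-preserving on $N$. This is immediate from the ordered-group axioms: if $a<b$ in $N$ then left-multiplying by $g$ gives $ga<gb$ and right-multiplying by $g^{-1}$ gives $gag^{-1}<gbg^{-1}$, so conjugation by $g$ preserves $<$ on all of $G$, hence on the conjugation-invariant subset $N$. Thus $\gamma$ maps $G/N$ into the order-preserving automorphisms of $N$, as claimed. The one subtlety I would flag as the main obstacle is the well-definedness of $\gamma$ as a map out of $G/N$: conjugation a priori defines only $G\to\aut(N)$, and for it to descend to $G/N$ one needs the inner automorphisms coming from $N$ to act trivially, i.e.\ $N$ abelian. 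This is precisely the setting in which the result is applied --- the normal $N$ produced by Theorem~\ref{o-minimal subgroup} is one-dimensional, hence abelian, and is an $H$-module in the sense of Definition~\ref{3.2} --- so the descent is automatic; in the general case one simply keeps the action attached to the chosen section $s$.
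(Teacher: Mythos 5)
Your proof is correct and takes essentially the same route as the paper's: the paper's entire (two-line) proof is just your final step, namely that conjugation by any representative preserves the order on $N$ by the ordered-group axioms, with the quotient order, the definable section (your appeal to Fact~\ref{edmundo}), and well-definedness left implicit. Your closing observation that descent of the conjugation action to $G/N$ requires $N$ to be abelian is a legitimate gap-fill rather than a departure --- the paper's notion of extension (via Definition~\ref{3.2}) silently presupposes this, and it holds in all of the paper's applications since the convex subgroup produced by Theorem~\ref{o-minimal subgroup} is o-minimal, hence abelian.
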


\begin{proof}
For any  $g\in G$ and any $n_1<n_2\in N$ we have, by the
definition of an ordered group, that $gn_1g^{-1}<gn_2g^{-1}$. Since
the action of $h\in G/H$ on $N$ is given by conjugation of any
representative $g$ of the class of $h$, the proposition follows.
\end{proof}

The following theorem is the main result of this section.
Together with Proposition~\ref{order preserving action} it allows us to use cohomology
of groups to characterize ordered groups definable in o-minimal
structures.

\begin{theorem}\label{2-Cohomology}
Let $(H, \cdot)$ and $(N,+)$ be two definable ordered groups and let
$\gamma:H\rightarrow \aut(N)$ be a definable group action of $H$
on $N$ such that  for all $h\in H$ we have that
$\gamma(h)$ is an order preserving definable
automorphism of $N$. Then there is a one-to-one
correspondence between the 2-cocycles in $H^2(H,N,\gamma)$ and the
group extensions of $H$ by $N$ which, when endowed with the
lexicographic order $<_l$ given first by $H$ and then by $N$,
become ordered groups.
\end{theorem}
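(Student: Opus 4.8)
The plan is to leverage Fact~\ref{ExtensionsAndCohomology}, which already supplies a one-to-one correspondence between the definable $2$-cocycles $c\in H^2(H,N,\gamma)$ and the group extensions $U$ of $H$ by $N$ via $\gamma$, together with the explicit reconstruction of the group operation on $U\cong N\times H$ as
\[
(a,g)\cdot(b,h) = \bigl(a + b^{\gamma(g)} + c(g,h),\, gh\bigr).
\]
Since every extension of $H$ by $N$ via $\gamma$ arises from some cocycle in this way, the only genuinely new content of the theorem is to verify that each such extension, equipped with the lexicographic order $<_l$ that compares first the $H$-coordinate and then the $N$-coordinate, is an ordered group; the converse inclusion---that any extension which happens to be an ordered group under $<_l$ comes from a cocycle---is then immediate from the Fact. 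First I would record that $<_l$ is a linear order on $N\times H$, which is automatic because $<_H$ and $<_N$ are linear orders and $<_l$ is their lexicographic composite.

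The heart of the argument is checking that $<_l$ is invariant under both left and right translation, i.e.\ that for all $(u,k)$ we have $(a,g) <_l (b,h)$ implies $(u,k)\cdot(a,g) <_l (u,k)\cdot(b,h)$ and $(a,g)\cdot(u,k) <_l (b,h)\cdot(u,k)$. I would split into the two ways $(a,g) <_l (b,h)$ can hold. If $g <_H h$, then after translating the $H$-coordinates become $kg, kh$ (resp.\ $gk, hk$), and since $H$ is an ordered group these satisfy $kg <_H kh$ (resp.\ $gk <_H hk$); as the $H$-coordinate is decisive for $<_l$, invariance follows without any reference to $N$. If instead $g = h$ and $a <_N b$, then after translating the two products share the same $H$-coordinate, so the comparison reduces to the $N$-coordinates. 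For right translation these are $a + u^{\gamma(g)} + c(g,k)$ and $b + u^{\gamma(g)} + c(g,k)$, which differ only in the summand $a$ versus $b$, so invariance is exactly translation-invariance in the ordered abelian group $N$.

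The single step that requires the full hypothesis is the left-translation sub-case: here the $N$-coordinates are $u + a^{\gamma(k)} + c(k,g)$ and $u + b^{\gamma(k)} + c(k,g)$, so one must compare $a^{\gamma(k)}$ with $b^{\gamma(k)}$. This is precisely where I would invoke the assumption that $\gamma(k)$ is an \emph{order-preserving} automorphism of $N$: from $a <_N b$ it yields $a^{\gamma(k)} <_N b^{\gamma(k)}$, and adding the common terms $u$ and $c(k,g)$ preserves the inequality by order-compatibility of $N$. Thus I expect this sub-case to be the main (indeed the only) obstacle, in the sense that it is the unique place where order-preservation of the action is indispensable; the remaining verifications are routine. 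Since $c$, $\gamma$, and the two orders are all definable, the resulting ordered group is definable, and combining this forward verification with Fact~\ref{ExtensionsAndCohomology} for the converse yields the asserted one-to-one correspondence.
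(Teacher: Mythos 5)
Your proposal is correct and follows essentially the same route as the paper: it invokes Fact~\ref{ExtensionsAndCohomology} for the cocycle--extension correspondence, then verifies compatibility of the lexicographic order with the reconstructed multiplication on $N\times H$, with the order-preservation of $\gamma$ entering at exactly the same place (comparing $a^{\gamma(k)}$ with $b^{\gamma(k)}$ in the left-translation case). If anything, you are more thorough than the paper, which only verifies invariance under left translation, whereas you also check right translation, where translation-invariance in the abelian group $N$ and right-invariance in $H$ suffice.
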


\begin{proof}
The right-to-left direction of the correspondence is given by the classic result for groups and Proposition~\ref{order preserving action}.

For the other direction, let $c(x,y)\in Z^2(H,N, \gamma)$.
We define a group structure on $N\times H$ by
\[(a,g)\odot(b,h):= (a+b^{\gamma(g)}+c(g,h), gh).
\]
We need only show that if $(b,h)<_l(d,j)$ then $(a,g)\odot
(b,h)<_l(a,g)\odot (d,j)$. Since $H$
is an ordered group, if $h<_H j$ then $gh<_H gj$ then by definition of
the lexicographic order $<_l$ we are done.

So we may assume that $h=j$ and $b<_N d$. Since $gh=gj$ we must
prove that
\[
a+b^{\gamma(g)}+c(g,h)<a+d^{\gamma(g)}+c(g,j).
\]

This is immediate: $c(g,h)=c(g,j)$ and by hypothesis
$\gamma(g)$ for $g\in H$ is an order preserving automorphism of $N$, whence
$b<_N d$ implies $b^{\gamma(g)}<_N d^{\gamma(g)}$.
\end{proof}

\section{Restricting the class of structures: The Pfaffian closure of the
real field}\label{subsection3.2}

In the (pure) real field, the additive group $(\Rr,+)$ and
multiplicative subgroup $(\Rr^+, \cdot)$ are not definably isomorphic. This
implies, for example, that the four groups $(\Rr,+)\times (\Rr,+)$,
$(\Rr^+, \cdot)\times (\Rr,+)$, $(\Rr,+)\times (\Rr^+, \cdot)$, and
$(\Rr^+, \cdot)\times (\Rr^+, \cdot)$, ordered lexicographically, are not
definably isomorphic. Of course, in the real exponential field
$(\mathbb R,+, \cdot, e^x)$ these four groups become definably
isomorphic. While our overarching goal is to characterize ordered groups definable in an
o-minimal structure, we prefer not to
focus on structures in which the existence of such
isomorphisms is an issue.\footnote{The interested reader is invited to peruse the excellent survey
\cite{MiSt} for more on this topic.}

We therefore concentrate in what follows on o-minimal
structures in which this is not a concern.
The issue of whether a definable isomorphism between the additive and multiplicative
groups exists in an o-minimal expansion of the real field is not specific to these groups.
The following result appears in \cite{PSS}, but follows from results in \cite{Ed} and \cite{Sp}.

\begin{fact}
Let $\mathcal M$ be an o-minimal expansion of
the field of real numbers in which there are two nonisomorphic definable o-minimal
groups $H$ and $G$. Then there is an ordered group isomorphism $\phi\colon H\to G$
such that the structure $(M, \phi)$ is o-minimal. In fact, if $G$ is the additive group of the real field, then $\phi$ lies in the Pfaffian closure of the real field.
\end{fact}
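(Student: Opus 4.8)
The plan is to reduce the statement to a single construction---realizing any one-dimensional definable ordered group as $(\Rr,+)$ by integrating a left-invariant density---and then to invoke Speissegger's theorem from \cite{Sp} that the Pfaffian closure of an o-minimal structure is again o-minimal. First I would normalize. Since $H$ and $G$ are o-minimal (hence one-dimensional) definable ordered groups, each is torsion-free, abelian, divisible, and densely ordered without endpoints; by o-minimal monotonicity and cell decomposition each admits a definable order-preserving identification of its underlying set with $(\Rr,<)$, under which the group operation becomes a binary operation $\oplus$ definable in $\mathcal M$ with identity $0$. Using Pillay's theorem that definable groups in o-minimal structures carry a definable $C^1$-manifold structure with $C^1$ operation, I may take $\oplus$ to be $C^1$, so that each left translation $L_a\colon x\mapsto a\oplus x$ is an increasing $C^1$-diffeomorphism of $\Rr$ (increasing because $\oplus$ is order-preserving). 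It therefore suffices to treat an abstract such $(\Rr,\oplus)$.

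The key construction is as follows. Set $v(t):=\partial_y(t\oplus y)\big|_{y=0}=L_t'(0)$, a definable, continuous, strictly positive function, and define $\phi(x):=\int_0^x \frac{dt}{v(t)}$, a $C^1$ function on all of $\Rr$ with $\phi'=1/v>0$. Differentiating the associativity law $a\oplus(x\oplus y)=(a\oplus x)\oplus y$ in $y$ at $y=0$ yields the invariance identity $v(a\oplus x)=L_a'(x)\,v(x)$, and hence $\phi'(a\oplus x)\,L_a'(x)=\phi'(x)$; thus $x\mapsto\phi(a\oplus x)-\phi(a)$ and $x\mapsto\phi(x)$ solve the same initial value problem and agree, giving $\phi(a\oplus x)=\phi(a)+\phi(x)$. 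As $\phi$ is a strictly increasing homomorphism, its image is a connected nontrivial subgroup of $(\Rr,+)$ containing a neighborhood of $0$, hence all of $\Rr$; so $\phi$ is an order-preserving isomorphism of $(\Rr,\oplus)$ onto $(\Rr,+)$.

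Applying this to $H$ and to $G$ produces order isomorphisms $\phi_H,\phi_G$ onto $(\Rr,+)$, each a solution of the Pfaffian equation $y'=1/v(x)$ with $1/v$ definable in $\mathcal M$; hence each is definable in the Pfaffian closure $\mathcal P(\mathcal M)$, which is o-minimal by \cite{Sp}. Then $\phi:=\phi_G^{-1}\circ\phi_H\colon H\to G$ is an order-preserving group isomorphism definable in $\mathcal P(\mathcal M)$, and since every set definable in $(M,\phi)$ is definable in $\mathcal P(\mathcal M)$, o-minimality transfers to $(M,\phi)$. For the final clause, when $G=(\Rr,+)$ one takes $\phi_G=\id$, so $\phi=\phi_H$; if the data defining $H$---and hence $v$---is semialgebraic, then $1/v$ is semialgebraic and $\phi$ is by construction a Pfaffian function over the real field, i.e. lies in its Pfaffian closure. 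This is exactly the motivating case $\log\colon(\Rr^+,\cdot)\to(\Rr,+)$, where $v(t)=t$ and $\phi(x)=\int_1^x \frac{dt}{t}$.

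The main obstacle I anticipate is not the integral computation, which is routine once differentiability is available, but rather the two inputs that make it legitimate: obtaining the $C^1$ group operation together with the order-preserving identification of the group with $(\Rr,<)$ from the o-minimal structure theory of one-dimensional groups, and---more essentially---recognizing $\phi$ as a Pfaffian function and correctly applying Speissegger's deep result that $\mathcal P(\mathcal M)$ is o-minimal. The remaining verifications (positivity of $v$, surjectivity of $\phi$, and that $(M,\phi)$ is a reduct of $\mathcal P(\mathcal M)$) are comparatively soft.
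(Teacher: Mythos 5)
The paper does not prove this Fact; it is quoted from the literature (attributed to \cite{PSS}, ``but follows from results in \cite{Ed} and \cite{Sp}''), so there is no in-paper argument to measure yours against. Your proposal is, in its essentials, correct, and it is in fact the standard argument that underlies the citation: normalize so that the group is a $C^1$ operation $\oplus$ on $(\mathbb R,<)$ compatible with the order, integrate the reciprocal of the invariant derivative $v(t)=L_t'(0)$ to produce the trivializing homomorphism, and then observe that $\phi$ solves the Pfaffian equation $y'=1/v(x)$ over $\mathcal M$, hence is definable in the Pfaffian closure $\mathcal P(\mathcal M)$, which is o-minimal by Speissegger's theorem; $(M,\phi)$ is a reduct of $\mathcal P(\mathcal M)$ and so is o-minimal. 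The differentiation of associativity giving $v(a\oplus x)=L_a'(x)\,v(x)$ and hence $\phi(a\oplus x)=\phi(a)+\phi(x)$ is the right computation; positivity of $v$ follows from $v(0)=1$ and the identity $1=L_a'(\ominus a)\,v(\ominus a)$, which shows $v$ never vanishes. The normalization inputs (one-dimensionality of an o-minimal definable ordered group, the definable order-preserving identification with $(\mathbb R,<)$, Pillay's definable $C^1$ structure) are genuine background facts that you correctly flag rather than reprove, which is appropriate for a statement the paper itself only cites.

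One caveat is worth making explicit. The final clause of the Fact, read literally---for arbitrary o-minimal $\mathcal M$ and arbitrary definable $H$ in $\mathcal M$, the isomorphism onto $(\mathbb R,+)$ lies in the Pfaffian closure of the \emph{real field}---is not what your argument yields, and it is false whenever $\mathcal M$ defines an increasing bijection $f$ of $\mathbb R$ that is not definable in $\mathbb R_{\mathit{Pfaff}}$ (such $\mathcal M$ exist): transporting addition along $f$ gives an o-minimal group whose continuous isomorphisms to $(\mathbb R,+)$ are exactly the scalar multiples of $f$. What your argument proves is the correct reading: $\phi\in\mathcal P(\mathcal M)$ always, hence $\phi\in\mathbb R_{\mathit{Pfaff}}$ when $H$ is semialgebraic, and---by idempotence of the Pfaffian closure---$\phi\in\mathbb R_{\mathit{Pfaff}}$ whenever $H$ is definable in $\mathbb R_{\mathit{Pfaff}}$. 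This last point is exactly what the paper uses immediately afterwards (``in the Pfaffian closure of the real field all o-minimal definable groups are isomorphic''), so you should state the idempotence step explicitly; with it, your restriction is not a gap but the needed correction of the statement's loose wording.
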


Hence, in the Pfaffian closure of the real field all o-minimal
definable groups are isomorphic. In view of the foregoing discussion, we
find it natural to expand the language so all such groups are
definably isomorphic and therefore in what follows we assume that
\smallskip
\begin{equation*}\label{isom assumption}
(*)\qquad\parbox{4.5in}{\emph{we work in an o-minimal field $\Rc$ such that all
o-minimal definable subgroups of $\Rc$ are definably isomorphic.\/}}
\end{equation*}
In particular, notice that $\Rc$ is an exponential real closed
field. This assumption yields an immediate sharpening of Theorem
\ref{decomposable}.

\begin{corollary}\label{solvable groups}
Let $G$ be an ordered group definable in $\Rc$ as above with universe $\Rr$.
Then $G$ is supersolvable and homeomorphic to $\Rr^n$. Specifically, $G$ admits
a supersolvable chain $H_0\unlhd H_1\unlhd H_2\unlhd\dots \unlhd
H_n=G$ such that $H_{i+1}/H_i$ is $\mathcal R$-isomorphic to
$(\mathbb R,+)$ and the order $<_G$ is given by the lexicographic order
induced by the order on the quotients $H_{i+1}/H_i$.
\end{corollary}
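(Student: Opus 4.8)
The plan is to feed the conclusion of Theorem~\ref{decomposable} into the standing assumption~$(*)$. First I would invoke Theorem~\ref{decomposable} to obtain the supersolvable chain $H_0\unlhd H_1\unlhd\dots\unlhd H_n=G$ with each $H_i$ normal in $G$, each quotient $H_{i+1}/H_i$ an o-minimal group, and the order $<_G$ realized as the lexicographic order induced by the quotient orders. Supersolvability, normality of the $H_i$, and the description of $<_G$ are then immediate from that theorem; what remains is to identify the isomorphism type of each quotient and to upgrade the ``finite product of o-minimal structures'' clause to a genuine homeomorphism with $\Rr^n$.

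For the quotients, each $Q_i:=H_{i+1}/H_i$ is a one-dimensional definable group carrying the linear order inherited from the lexicographic decomposition, so $Q_i$ is an ordered (hence torsion-free, non-compact) o-minimal group definable in $\Rc$. Here I would apply the classification of one-dimensional definable groups in an o-minimal expansion of a real closed field: such a $Q_i$ is definably isomorphic to a one-dimensional o-minimal definable subgroup of $\Rc$ --- concretely, to the additive group $(\Rr,+)$ or to the multiplicative group of positive elements --- the compact circle possibility being excluded by orderability. Assumption~$(*)$ then applies directly: all o-minimal definable subgroups of $\Rc$ are definably isomorphic, so every $Q_i$ is $\Rc$-isomorphic to $(\Rr,+)$, which is the asserted form of the quotients.

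The topological statement follows from the same chain. Theorem~\ref{decomposable} already provides that $G$, with its ambient subspace topology, is homeomorphic to the topological product of the o-minimal quotients $Q_1,\dots,Q_n$; this is the essential input, since the lexicographic order topology is a priori \emph{not} the product topology, yet the refibering underlying Lemma~\ref{refibered} and Theorem~\ref{decomposable} arranges that the two agree. Since each $Q_i\cong_{\Rc}(\Rr,+)$ is homeomorphic to $\Rr$, I conclude that $G$ is homeomorphic to $\Rr^n$ with $n=\dim G$.

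The main obstacle I anticipate is the middle step: pinning down each ordered o-minimal quotient as a definable subgroup of $\Rc$ so that $(*)$ can legitimately be invoked, and ensuring that the resulting isomorphism is $\Rc$-definable rather than merely an abstract group isomorphism. Orderability is what rules out the circle group and forces the additive/multiplicative dichotomy, after which $(*)$ collapses both possibilities to $(\Rr,+)$; the homeomorphism claim, by contrast, is essentially inherited from Theorem~\ref{decomposable} and should require no new argument.
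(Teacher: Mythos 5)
Your proposal is correct and follows exactly the paper's route: the paper states this corollary as an ``immediate sharpening'' of Theorem~\ref{decomposable} under assumption~$(*)$, with no further argument given. Your additional middle step---using orderability to exclude definably compact one-dimensional quotients and to realize each quotient as an o-minimal definable group to which $(*)$ applies---is precisely the detail the paper leaves implicit, so you have simply fleshed out the same proof.
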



\section{Dimension two groups} 
\label{dimension2}

We now apply Theorem~\ref{2-Cohomology} to analyze
groups of dimension two in $\mathcal R$, an o-minimal expansion of a real closed
field satisfying~$(*)$, that is, in which all o-minimal groups are isomorphic. All objects in this section are assumed to be
definable in $\mathcal R$. Even though all
the results in this section hold in this context, for concreteness we shall take $\mathcal R$ to be
an o-minimal expansion of the field of real numbers $\mathbb R$ in which all o-minimal groups are isomorphic. We also shall abuse notation by referring to
$\mathbb{R}$ as the additive group and by $\mathbb R^n$ as the cartesian
product of $n$ copies of the additive group $\mathbb R$. Note from~$(*)$ that if $N$ is an o-minimal
group definable in $\mathcal R$ then $\aut(N)$, the group of \emph{definable}
automorphisms of $N$, is isomorphic to $(R\setminus 0, \cdot)$. In
particular, the group of {\em order preserving\/} automorphisms is
(isomorphic to) the connected component $(R^{>0},\cdot)$, which is
definably isomorphic to $(R,+)$.

We also shall have to adopt an additional hypothesis for the results we prove here.
Let $\left(\mathbb{R},+\right)$ a definable
$\left(\mathbb{R},+\right)$-module with action $\gamma$ and
a definable extension
\[
1\rightarrow \mathbb{R}\rightarrow G\rightarrow
\mathbb{R}\rightarrow 1
\]
be a definable extension. The issue of whether the extension splits is delicate;
in general we do not know if it splits if $G$ is abelian, see \cite{PS05} (in this situation
it is not difficult to show that the nonabelian case splits). Thus
we need to add another condition on our structure $\mathcal{R}$:
\begin{equation*}\label{isom assumption2}
(**)\qquad\parbox{4.5in}{\emph{In
$\mathcal{R}$ every abelian definable extension of $\mathbb{R}$ by
$\mathbb{R}$ splits.\/}}
\end{equation*}

Under this assumption, combining
the known nonabelian case with Fact~\ref{ExtensionsAndCohomology}), we find that
$$H^{2}\left(\mathbb{R},\mathbb{R},\gamma\right)=\left\{ 0\right\}$$
for all actions $\gamma$ of $\mathbb{R}$ on $\mathbb{R}$.

Our goal in this section is Theorem~\ref{T:1.3} in which we
completely describe all extensions of $\mathbb R$ by $\mathbb R$,
and moreover all decomposable ordered groups of dimension 2
definable in $\mathcal R$ under the assumptions~$(*)$ and~$(**)$.
The key ingredient in the proof of this
theorem is Theorem \ref{T:1.2}, which asserts that for any two definable nontrivial actions $\gamma$ and
$\gamma^{\prime}$ of $\mathbb{R}$ on $\mathbb{R}$, the extensions
$\mathbb{R}\rtimes_{\gamma}\mathbb{R}$ and
$\mathbb{R}\rtimes_{\gamma^{\prime}}\mathbb{R}$ are definably
isomorphic groups.

We require the following two facts about definable group
actions that can be found in \cite{Ba}.

\begin{fact}\label{P:accionesgrales}
Let $\left(\mathbb{R},+\right)$ be a definable
$\mathbb{R}^{n}$-module with nontrivial action $\gamma$. Then
\[\gamma\left(x_{1},\ldots ,x_{n}\right)=e^{c_{1}x_{1}+\ldots+c_{n}x_{n}},\]
where $c_{i}\neq0$ for some $i\in\left\{ 1,\ldots,n\right\}$.
\end{fact}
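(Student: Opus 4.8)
The plan is to unwind the definition of the action and reduce everything to the classification of definable additive self-maps of $(\mathbb{R},+)$. Recall from the discussion following $(*)$ that $\aut(\mathbb{R},+)$, the group of definable automorphisms of the additive group, is isomorphic to $(\mathbb{R}\setminus\{0\},\cdot)$, each such automorphism being multiplication by a nonzero scalar. Hence the datum of a definable action $\gamma\colon(\mathbb{R}^n,+)\to\aut(\mathbb{R},+)$ is the same as that of a definable group homomorphism $\lambda\colon(\mathbb{R}^n,+)\to(\mathbb{R}\setminus\{0\},\cdot)$, where $\lambda(x):=\gamma(x)(1)$ and $\gamma(x)(a)=\lambda(x)\cdot a$; this $\lambda$ is definable since $\gamma$ is. First I would record this identification, so that the target equality $\gamma(x)=e^{c_1x_1+\cdots+c_nx_n}$ becomes the assertion that $\lambda(x)=e^{c_1x_1+\cdots+c_nx_n}$.

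Next I would check that $\lambda$ takes only positive values, so that the definable logarithm can be applied. Since $(\mathbb{R}^n,+)$ is $2$-divisible, for every $x$ we have $\lambda(x)=\lambda(x/2)^2>0$, so $\lambda$ maps into the order-preserving component $(\mathbb{R}^{>0},\cdot)$. By $(*)$ this component is definably isomorphic to $(\mathbb{R},+)$; fix such an isomorphism, write it $t\mapsto e^{t}$, and let $\log$ denote its inverse. Then
\[
\psi:=\log\circ\lambda\colon(\mathbb{R}^n,+)\to(\mathbb{R},+)
\]
is a definable group homomorphism, and it suffices to prove that $\psi$ is \emph{linear}, i.e. $\psi(x_1,\dots,x_n)=\sum_i c_i x_i$: exponentiating then gives $\lambda(x)=e^{\sum_i c_i x_i}$, while nontriviality of $\gamma$ forces $\psi\not\equiv 0$, so that $c_i\neq 0$ for some $i$, as required.

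Finally I would establish linearity of $\psi$ by reducing to one variable. Restricting $\psi$ to the $i$-th coordinate axis yields a definable endomorphism $\psi_i\colon(\mathbb{R},+)\to(\mathbb{R},+)$, and additivity of $\psi$ gives $\psi(x_1,\dots,x_n)=\sum_{i=1}^n\psi_i(x_i)$. So it remains to show that every definable additive map $\psi_i\colon(\mathbb{R},+)\to(\mathbb{R},+)$ equals $x\mapsto c_i x$ with $c_i:=\psi_i(1)$. Additivity already makes $\psi_i$ a $\mathbb{Q}$-linear map, and o-minimality supplies the missing rigidity: $\psi_i$ is piecewise monotone and, being a definable homomorphism of these one-dimensional groups, continuous; an additive continuous function $\mathbb{R}\to\mathbb{R}$ is $\mathbb{R}$-linear. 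This last step---upgrading a definable $\mathbb{Q}$-linear map to an $\mathbb{R}$-linear one by o-minimality---is the only real content and the place where the hypotheses are genuinely used; everything else is bookkeeping. Assembling the pieces gives $\psi(x)=\sum_i c_i x_i$ and hence the asserted form of $\gamma$.
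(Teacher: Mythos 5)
Your proof is correct. Note that the paper itself gives no argument for this statement: it is recorded as a Fact imported from the first author's thesis \cite{Ba}, so there is no in-paper proof to compare against. Your route is the natural one, and every ingredient you invoke is available in the paper: the identification of $\aut(\mathbb{R},+)$ with $(\mathbb{R}\setminus\{0\},\cdot)$ is exactly the remark following assumption $(*)$ at the start of Section~\ref{dimension2}, and the continuity of definable homomorphisms that you need to pass from $\mathbb{Q}$-linearity to $\mathbb{R}$-linearity is Fact~\ref{P:homdefcont}. The remaining steps (positivity via $\lambda(x)=\lambda(x/2)^2$, splitting $\psi$ along coordinate axes, absorbing the scalar from the chosen isomorphism $t\mapsto e^{t}$ into the constants $c_i$, and deriving $c_i\neq 0$ for some $i$ from nontriviality) are all sound, so your write-up is a complete, self-contained proof of the Fact that the paper leaves to a citation.
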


\begin{fact}\label{P:accionesequi}
Let  $\gamma^{\prime}\left(\left(x_{1},\ldots,x_{n}\right)\right)=e^{c_{1}x_{1}+\ldots+c_{n}x_{n}}$
and
$\gamma\left(\left(x_{1},\ldots,x_{n}\right)\right)=e^{x_{1}}$
be nontrivial actions for which $\left(\left(\mathbb{R},+\right),\gamma\right)$ and
$\left(\left(\mathbb{R},+\right),\gamma^{\prime}\right)$ are
definable  $\mathbb{R}^{n}$-modules. Then $\gamma$ and
$\gamma^{\prime}$ are definably equivalent.
\end{fact}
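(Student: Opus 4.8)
The plan is to exhibit, by a direct linear-algebra construction, a definable group automorphism $\psi$ of $\mathbb{R}^n$ realizing the equivalence. By the definition of definable equivalence it suffices to produce such a $\psi$ with $\gamma=\gamma'\circ\psi$. Since every value of either action is multiplication on $(\mathbb{R},+)$ by a positive scalar of the form $e^{L(\mathbf{x})}$ for a linear form $L$, and the exponential is injective, the equation $\gamma=\gamma'\circ\psi$ is equivalent to the purely linear condition that the form $\mathbf{x}\mapsto c_1\psi_1(\mathbf{x})+\cdots+c_n\psi_n(\mathbf{x})$ coincide with $\mathbf{x}\mapsto x_1$, where $\psi=(\psi_1,\dots,\psi_n)$.

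First I would restrict attention to the linear candidates for $\psi$: every matrix $A\in\gl(n,\mathbb{R})$ induces a definable automorphism $\mathbf{x}\mapsto A\mathbf{x}$ of $(\mathbb{R}^n,+)$, being a polynomial bijection with polynomial inverse. (We need only one witness, so we do not require the converse that \emph{all} definable automorphisms of $(\mathbb{R}^n,+)$ are of this form, although this does follow from o-minimality via the fact that a definable additive endomorphism of $\mathbb{R}$ is multiplication by a scalar.) Writing $\mathbf{c}=(c_1,\dots,c_n)^{T}$ and $\mathbf{e}_1=(1,0,\dots,0)^{T}$ and taking $\psi(\mathbf{x})=A\mathbf{x}$, the condition from the previous paragraph becomes $\mathbf{c}^{T}A=\mathbf{e}_1^{T}$, equivalently $A^{T}\mathbf{c}=\mathbf{e}_1$.

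It then remains to solve this for an \emph{invertible} $A$. Since $\gamma'$ is nontrivial we have $\mathbf{c}\neq 0$ by Fact~\ref{P:accionesgrales}, so there is an invertible matrix $B$ with $B\mathbf{c}=\mathbf{e}_1$ (complete $\mathbf{c}$ to a basis $\mathbf{c},\mathbf{v}_2,\dots,\mathbf{v}_n$, let $P$ be the matrix with these columns, and take $B=P^{-1}$); setting $A=B^{T}\in\gl(n,\mathbb{R})$ gives $A^{T}\mathbf{c}=B\mathbf{c}=\mathbf{e}_1$. With this $A$ one checks directly that
\[
\gamma'(\psi(\mathbf{x}))=e^{\mathbf{c}^{T}A\mathbf{x}}=e^{\mathbf{e}_1^{T}\mathbf{x}}=e^{x_1}=\gamma(\mathbf{x}),
\]
so $\psi$ witnesses the definable equivalence of $\gamma$ and $\gamma'$. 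I expect no serious obstacle here: the entire content is the reduction to the equation $A^{T}\mathbf{c}=\mathbf{e}_1$, and the only points requiring care are the direction of composition in the definition of definable equivalence and ensuring that the chosen $A$ is invertible, both of which are handled by the construction above.
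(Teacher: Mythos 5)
Your proof is correct: reducing $\gamma=\gamma'\circ\psi$ via injectivity of the exponential to the linear equation $A^{T}\mathbf{c}=\mathbf{e}_1$, and solving it with an invertible $A$ (possible precisely because $\mathbf{c}\neq 0$, which nontriviality guarantees), gives a definable automorphism of $(\mathbb{R}^n,+)$ witnessing the equivalence in the direction required by the paper's definition. Note that the paper itself gives no proof of this statement---it is imported as a Fact from the first author's thesis \cite{Ba}---so there is nothing to compare against; your linear change-of-coordinates argument is the natural one and can stand as a proof.
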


We now have

\begin{theorem}\label{T:1.2}
Let $\left(\left(\mathbb{R},+\right),\gamma\right)$ and
$\left(\left(\mathbb{R},+\right),\gamma^{\prime}\right)$ be
definable $\mathbb{R}$-modules with nontrivial actions $\gamma$ and
$\gamma^{\prime}$. Then $\mathbb{R}\rtimes_{\gamma}\mathbb{R}$ and
$\mathbb{R}\rtimes_{\gamma^{\prime}}\mathbb{R}$ are definably
isomorphic.
\end{theorem}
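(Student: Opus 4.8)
The plan is to reduce everything to the single observation that definably equivalent actions produce definably isomorphic semidirect products, and then to invoke Facts~\ref{P:accionesgrales} and~\ref{P:accionesequi} to see that any two nontrivial actions of $\mathbb{R}$ on $\mathbb{R}$ are definably equivalent. First I would apply Fact~\ref{P:accionesgrales} in the case $n=1$ to write $\gamma(x)=e^{cx}$ and $\gamma'(x)=e^{c'x}$ for suitable nonzero constants $c,c'$. By Fact~\ref{P:accionesequi} each of these is definably equivalent to the standard action $x\mapsto e^{x}$; since definable equivalence is visibly symmetric and transitive (the composite and the inverse of definable automorphisms of $\mathbb{R}$ are again definable automorphisms), it follows that $\gamma$ and $\gamma'$ are definably equivalent. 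Thus there is a definable automorphism $\psi$ of $(\mathbb{R},+)$ with $\gamma=\gamma'\circ\psi$.

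The remaining step is the key lemma that $\gamma=\gamma'\circ\psi$ yields a definable isomorphism of the two semidirect products. Recalling that the operation on $\mathbb{R}\rtimes_{\gamma}\mathbb{R}$ is $(a,g)\odot(b,h)=(a+b^{\gamma(g)},g+h)$ (the trivial-cocycle case of Fact~\ref{ExtensionsAndCohomology}), I would define $\Phi\colon \mathbb{R}\rtimes_{\gamma}\mathbb{R}\to\mathbb{R}\rtimes_{\gamma'}\mathbb{R}$ by $\Phi(a,g):=(a,\psi(g))$. This map is definable, and it is a bijection because $\psi$ is an automorphism of $\mathbb{R}$ while $\Phi$ is the identity on the first coordinate. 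To see that $\Phi$ is a homomorphism one computes $\Phi\bigl((a,g)\odot(b,h)\bigr)=(a+b^{\gamma(g)},\psi(g+h))$ and compares it with $\Phi(a,g)\odot\Phi(b,h)=(a+b^{\gamma'(\psi(g))},\psi(g)+\psi(h))$; these agree because $\gamma'(\psi(g))=\gamma(g)$ by the choice of $\psi$ and $\psi(g+h)=\psi(g)+\psi(h)$ since $\psi$ is a group homomorphism. Hence $\Phi$ is the required definable isomorphism.

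The proof is essentially formal once the two facts are in hand, so I do not expect a serious obstacle; the only point requiring care is the bookkeeping of the direction of composition in the definition of definable equivalence ($\gamma=\gamma'\circ\psi$ rather than $\gamma'=\gamma\circ\psi$), since this determines whether $\Phi$ twists the $H$-coordinate by $\psi$ or by $\psi^{-1}$. Because the equivalence relation is symmetric this causes no genuine difficulty, but it must be tracked consistently so that the cancellation $\gamma'(\psi(g))=\gamma(g)$ comes out correctly. Note also that $\Phi$ is asserted only to be an isomorphism of \emph{groups}, not of extensions---it deliberately twists the quotient coordinate---which is exactly what the statement of the theorem requires.
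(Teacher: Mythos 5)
Your proof is correct and is essentially the paper's own argument: the paper likewise invokes Fact~\ref{P:accionesgrales} to write $\gamma(y)=e^{cy}$ and $\gamma'(y)=e^{dy}$, and then exhibits the isomorphism $(x,y)\mapsto\left(x,\tfrac{c}{d}y\right)$, which is exactly your map $\Phi(a,g)=(a,\psi(g))$ with the intertwining automorphism $\psi(y)=\tfrac{c}{d}y$ made explicit. The only difference is packaging: where you route through Fact~\ref{P:accionesequi} and the general (correct) lemma that definably equivalent actions yield definably isomorphic semidirect products, the paper verifies the homomorphism identity for this concrete $\psi$ by direct computation.
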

\begin{proof}
By Fact~\ref{P:accionesgrales},
we may assume that $\gamma\left(y\right)=e^{cy}$ and $\gamma^{\prime}\left(y\right)=e^{dy}$,
for non-zero real constants $c$ and $d$.
We now assert that the bijection
\begin{eqnarray*}
\mathbb{R}\rtimes_{\gamma}\mathbb{R} & \rightarrow & \mathbb{R}\rtimes_{\gamma^{\prime}}\mathbb{R}\\
\left(x,y\right) & \mapsto & \left(x,\frac{c}{d}y\right)\end{eqnarray*}
is a group isomorphism. Indeed

\[\left(x_{1},y_{1}\right)\otimes\left(x_{2},y_{2}\right)  =  \left(x_{1}+e^{cy_{1}}x_{2},y_{1}+y_{2}\right)\mapsto\left(x_{1}+e^{cy_{1}}x_{2},\frac{c}{d}\left(y_{1}+y_{2}\right)\right)\]
and
\[\left(x_{1},\frac{c}{d}y_{1}\right)\left(x_{2},\frac{c}{d}y_{2}\right)  =  \left(x_{1}+e^{cy_{1}}x_{2},\frac{c}{d}\left(y_{1}+y_{2}\right)\right).\]

\end{proof}


Notice in the proof above that if $\gamma$ and $\gamma^\prime$ are
order preserving, then by Proposition~\ref{order preserving action}
and Theorem~\ref{2-Cohomology} the groups are ordered groups
under the lexicographic order, and it is easy to verify that the map
above preserves this ordering if and only if $c/d>0$. We here restrict our attention to
understanding the group isomorphisms between the ordered groups that
arise. In Section~\ref{sectionOrdered} we work to understand
these ordered groups modulo \emph{ordered} isomorphism.

\begin{remark}
In view of Theorem~\ref{T:1.2}, when we write the
semidirect product $\mathbb{R}\rtimes\mathbb{R}$, we
take the action to be $\gamma\left(x\right)=e^{x}$.
\end{remark}

\begin{theorem}\label{T:1.3}
Let $\left(\mathbb{R},+\right)$ be a definable $\mathbb{R}$-module.
Then, modulo definable isomorphisms, the only possible definable
extensions of $\mathbb{R}$ by $\mathbb{R}$ are
$\left(\mathbb{R}\times\mathbb{R},+\right)$ and
$\mathbb{R}\rtimes\mathbb{R}$. Furthermore, the only possible
decomposable groups of dimension~2 definable in $\mathcal R$ are,
modulo definable group isomorphisms,
$\left(\mathbb{R}\times\mathbb{R},+\right)$ and
$\mathbb{R}\rtimes\mathbb{R}$, both ordered lexicographically.
\end{theorem}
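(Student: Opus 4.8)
The plan is to prove the two assertions in turn, deducing the classification of dimension-$2$ groups from the classification of extensions together with the structural result Corollary~\ref{solvable groups}. First I would fix a definable action $\gamma$ of $(\mathbb{R},+)$ on $(\mathbb{R},+)$ and invoke the vanishing $H^{2}(\mathbb{R},\mathbb{R},\gamma)=\{0\}$ established above under hypotheses $(*)$ and $(**)$. By Fact~\ref{ExtensionsAndCohomology}, the equivalence classes of definable extensions of $\mathbb{R}$ by $\mathbb{R}$ via $\gamma$ are in bijection with $H^{2}(\mathbb{R},\mathbb{R},\gamma)$; hence there is a single class, and every such extension is equivalent as an extension—so in particular definably isomorphic as a group—to the split extension $\mathbb{R}\rtimes_{\gamma}\mathbb{R}$. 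It then remains to sort these semidirect products by the action. Since $(\mathbb{R},+)$ is connected and divisible, every definable homomorphism $\gamma\colon(\mathbb{R},+)\to\aut(\mathbb{R})\cong(R\setminus\{0\},\cdot)$ lands in the connected component $(R^{>0},\cdot)$; thus $\gamma$ is either trivial, yielding the direct product $(\mathbb{R}\times\mathbb{R},+)$, or nontrivial. In the nontrivial case Fact~\ref{P:accionesgrales} gives $\gamma(y)=e^{cy}$ with $c\neq 0$, and Theorem~\ref{T:1.2} shows that all such semidirect products are definably isomorphic to the one with the standard action $\gamma(x)=e^{x}$, namely $\mathbb{R}\rtimes\mathbb{R}$. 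This produces exactly the two isomorphism types asserted.

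For the second assertion I would take an arbitrary decomposable ordered group $G$ of dimension~$2$ definable in $\mathcal{R}$ and apply Corollary~\ref{solvable groups}. This furnishes a supersolvable chain $H_{0}\unlhd H_{1}\unlhd H_{2}=G$ in which each successive quotient is $\mathcal{R}$-isomorphic to $(\mathbb{R},+)$ and the order $<_{G}$ is the lexicographic order induced by these quotients. As $\dim G=2$ and $G$ is (by the same corollary) homeomorphic to $\mathbb{R}^{2}$, hence torsion-free, the bottom term $H_{0}$ is a zero-dimensional subgroup and so trivial; thus $\dim H_{1}=1$ and $H_{1}\cong(\mathbb{R},+)$ is a normal subgroup with $G/H_{1}\cong(\mathbb{R},+)$. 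Therefore $G$ is a definable extension of $\mathbb{R}$ by $\mathbb{R}$, and by the first part it is definably isomorphic to $(\mathbb{R}\times\mathbb{R},+)$ or to $\mathbb{R}\rtimes\mathbb{R}$, carrying precisely the lexicographic order supplied by Corollary~\ref{solvable groups}.

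The delicate analytic and cohomological content has, in effect, already been discharged by the earlier results, so what remains is to get two reductions right. The first is that cohomological triviality $H^{2}=\{0\}$ genuinely collapses all extensions \emph{via a fixed} $\gamma$ to the single split class; this is immediate from Fact~\ref{ExtensionsAndCohomology} but must be stated for each $\gamma$ separately. The second, and the step I would guard most carefully, is that varying $\gamma$ over all nontrivial actions introduces no new isomorphism types: a priori distinct actions could yield nonisomorphic groups, and it is exactly Theorem~\ref{T:1.2} that rules this out. A subsidiary point worth recording is that every definable action of $(\mathbb{R},+)$ on $(\mathbb{R},+)$ is automatically order preserving, by connectedness and divisibility; this is what lets the ungraded classification feed cleanly into the ordered statement through Corollary~\ref{solvable groups}.
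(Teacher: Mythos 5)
Your proposal is correct and takes essentially the same route as the paper: the paper's own (very terse) proof cites exactly the ingredients you spell out, namely the hypotheses $(*)$ and $(**)$ giving $H^{2}(\mathbb{R},\mathbb{R},\gamma)=\{0\}$ and hence splitting via Fact~\ref{ExtensionsAndCohomology}, Theorem~\ref{T:1.2} to collapse all nontrivial actions to $\gamma(x)=e^{x}$, and the structural results on convex normal o-minimal subgroups to realize any dimension-2 decomposable ordered group as a lexicographically ordered extension of $\mathbb{R}$ by $\mathbb{R}$. Your appeal to Corollary~\ref{solvable groups} in the second part is just a packaged form of the paper's appeal to Theorem~\ref{o-minimal subgroup}, Proposition~\ref{order preserving action}, and Theorem~\ref{2-Cohomology}, and the only ingredient you leave tacit is the (trivial, but cited in the paper) fact that $\mathbb{R}\rtimes\mathbb{R}$, being nonabelian, is not definably isomorphic to $(\mathbb{R}\times\mathbb{R},+)$, so the two listed types are genuinely distinct.
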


\begin{proof}
The first assertion is a direct consequence of our assumptions,
Theorem \ref{T:1.2}, and the fact that $\mathbb{R}\rtimes\mathbb{R}$
is not isomorphic to $\left(\mathbb{R}\times\mathbb{R},+\right)$.
For the second assertion, we first note that Theorem \ref{o-minimal subgroup} implies the existence of a unique convex one dimensional o-minimal subgroup. Then
applying Proposition~\ref{order preserving
action}, and Theorems~\ref{2-Cohomology} and~\ref{T:1.2}, it follows that
any two definable ordered groups with underlying domain
$\left(\mathbb{R}\times\mathbb{R},+\right)$,
respectively, $\mathbb{R}\rtimes\mathbb{R}$, are isomorphic as ordered groups.
\end{proof}


\section{Spectral sequences}\label{spectral}

Our goal in this and the ensuing section is to classify all dimension~3 ordered groups definable
an o-minimal expansion of a real closed field $\Rc$, under the ongoing assumptions~$(*)$
and~$(**)$.  Generally speaking, we shall work to understand all such groups inductively, by analyzing
them as extensions of their convex normal o-minimal subgroup $N$ and
the (2-dimensional) quotient $H:=G/N$.

Before we can analyze the cases that arise, we require some facts about the cohomology groups of
definable modules in $\mathcal R$. Some of this work relies on the study of
spectral sequences that the first author developed in her M.Sc.thesis; all the background needed is developed in
 \cite{BarrigaThesis} and can be
found in \cite{Ba}. The notation is standard and may be found in, e.g., \cite{We}.

As in Section~\ref{dimension2}, all objects are assumed to be definable in
$\mathcal R$, and although our results hold under in general under the hypotheses~$(*)$
and~$(**)$, for concreteness we take the underlying domain of $\mathcal R$ to be
$\mathbb R$ (such as if we work in $\mathbb R_{{\it Pfaff}}$). In fact we abuse notation
in what follows by referring to $\mathbb{R}$ as the additive group of the field and we also write
$\left(\mathbb{R}^{2},+\right)$ for
$\left(\mathbb{R}\times\mathbb{R},+\right)$.

The following, which
appears as Corollary~3.8 in \cite{Ba}, plays a crucial role.

\begin{fact}\label{C:HyS}
Let $\left(M,\gamma\right)$ be a definable $G$-module and $K$ a definable normal subgroup of $G$. For the definable  cochain
complex
$\left(C=\underset{n\geq0}{\oplus}C^{n}\left(G,M,\gamma\right),\delta\right)$
there are filtrations of the groups
$H^{1}=H^{1}\left(G,M,\gamma\right)$ and
$H^{2}=H^{2}\left(G,M,\gamma\right)$ of the form
\[
\left\{ 0\right\} \leq F^{1}H^{1}\trianglelefteq H^{1}
\]and \[
\left\{ 0\right\} \trianglelefteq F^{2}H^{2}\trianglelefteq F^{1}H^{2}\trianglelefteq H^{2}
\] such that
\[
F^{1}H^{1}=H^{1}\left(\nicefrac{G}{K},M^{K}\right) \text{,}\quad
{H^{1}}/{F^{1}H^{1}}\leq
\left(H^{1}\left(K,M\right)\right)^{\nicefrac{G}{K}},
\]
and
\begin{eqnarray*}
F^{2}H^{2}&=&\frac{H^{2}\left(\nicefrac{G}{K},M^{K}\right)}{{\rm im}\: d:\left(H^{1}\left(K,M\right)\right)^{\nicefrac{G}{K}}\rightarrow H^{2}\left(\nicefrac{G}{K},M^{K}\right)}, \\
\smallskip
{F^{1}H^{2}}/{F^{2}H^{2}}
&\leq & H^{1}\left(\nicefrac{G}{K},H^{1}\left(K,M\right)\right),\\
{H^{2}}/{F^{1}H^{2}}
&=&\left(H^{2}\left(K,M\right)\right)^{\nicefrac{G}{K}}.
\end{eqnarray*}
\end{fact}
\smallskip

If $G$ is a group acting on the additive group
$(\mathbb R, +)$, then $C^n(G, \mathbb R)$ is a
$\mathbb R$-module by definition, a property that is inherited by the
quotients $H^n(G,\mathbb R)$.
With this observation, the following follows immediately from Fact~\ref{C:HyS}.

\begin{corollary}\label{all you need}
Let $G$ be a group with an action $\gamma$ on the additive group
$(\mathbb R, +)$, the additive group of $\mathcal R$, and let $K$ be
a definable normal subgroup of $G$. Then the following hold:

\begin{enumerate}
\item
If $H^{2}\left(\nicefrac{G}{K},\mathbb R^{K}\right)$,
$H^{1}\left(\nicefrac{G}{K},H^{1}\left(K,\mathbb R\right)\right)$,
and $\left(H^{2}\left(K,\mathbb R\right)\right)^{\nicefrac{G}{K}}$
are all trivial groups, then $H^{2}\left(G,\mathbb R,\gamma\right)$
is trivial.

\item
If one of $H^{2}\left(\nicefrac{G}{K},\mathbb R^{K}\right)$,
$H^{1}\left(\nicefrac{G}{K},H^{1}\left(K,\mathbb R\right)\right)$,
and $\left(H^{2}\left(K,\mathbb R\right)\right)^{\nicefrac{G}{K}}$
is isomorphic to $\mathbb R$ and the other two are trivial groups,
then $H^{2}\left(G,\mathbb R,\gamma\right)$ is either the identity
or isomorphic to $\mathbb R$.
\end{enumerate}
\end{corollary}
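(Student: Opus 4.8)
The plan is to derive both statements directly from the filtrations supplied by Fact~\ref{C:HyS}, using only the elementary observation that a group sandwiched between two trivial groups in a subnormal series is itself trivial, and that a group admitting a subnormal series whose factors are each either trivial or isomorphic to $\mathbb R$ must itself be trivial or isomorphic to $\mathbb R$. The key point is that Fact~\ref{C:HyS} exhibits $H^2 = H^2(G,\mathbb R,\gamma)$ together with the chain $\{0\}\trianglelefteq F^2H^2\trianglelefteq F^1H^2\trianglelefteq H^2$, and it identifies the three successive quotients in terms of the data appearing in the statements: $F^2H^2$ is a quotient of $H^2(G/K,\mathbb R^K)$, the middle factor $F^1H^2/F^2H^2$ embeds in $H^1(G/K,H^1(K,\mathbb R))$, and the top factor $H^2/F^1H^2$ equals $(H^2(K,\mathbb R))^{G/K}$.

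For part~(1), I would argue as follows. First I would note that since $F^2H^2$ is a quotient of $H^2(G/K,\mathbb R^K)$, triviality of the latter forces $F^2H^2 = \{0\}$. Next, since $F^1H^2/F^2H^2$ embeds in $H^1(G/K,H^1(K,\mathbb R))$, which is assumed trivial, and $F^2H^2$ is trivial, it follows that $F^1H^2$ is trivial. Finally, the top factor $H^2/F^1H^2 = (H^2(K,\mathbb R))^{G/K}$ is trivial by hypothesis, so together with $F^1H^2 = \{0\}$ we conclude $H^2 = \{0\}$, as required.

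For part~(2), I would run the same filtration analysis, but now tracking which factor carries the single copy of $\mathbb R$. Each of the three successive quotients is, by the identifications in Fact~\ref{C:HyS}, a subgroup or quotient of one of the three groups named in the hypothesis; since two of those three groups are trivial and the third is isomorphic to $\mathbb R$, exactly one of the three factors of the filtration is a subgroup or quotient of a group isomorphic to $\mathbb R$, and the other two factors are trivial. A subgroup or quotient of $\mathbb R$ (as a $\mathbb R$-vector space, or more crudely as a definable group) is either trivial or isomorphic to $\mathbb R$, so at most one nontrivial factor occurs, and it contributes at most one copy of $\mathbb R$. Building $H^2$ back up through the two extensions then yields that $H^2$ is either trivial or isomorphic to $\mathbb R$.

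The only genuine subtlety, and the step I would flag as the main point requiring care rather than true difficulty, is justifying that a subgroup or quotient of $\mathbb R$ appearing here is again trivial or isomorphic to $\mathbb R$: this uses that all the cohomology groups in question are $\mathbb R$-modules (as observed just before the corollary, since $C^n(G,\mathbb R)$ is an $\mathbb R$-module and this structure passes to the $H^n$), so the relevant sub- and quotient objects are $\mathbb R$-vector subspaces and quotients of a one-dimensional space, hence of dimension $0$ or $1$. Once this is in place, the rest is purely the formal extension bookkeeping described above, and no spectral-sequence computation beyond invoking Fact~\ref{C:HyS} is needed.
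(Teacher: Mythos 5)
Your proposal is correct and follows essentially the same route as the paper: the paper likewise derives both parts immediately from the filtration in Fact~\ref{C:HyS}, dismisses part~(1) as clear, and for part~(2) invokes exactly the point you flag, namely that the cohomology groups inherit an $\mathbb R$-module structure from $C^{n}\left(G,\mathbb R\right)$, so any sub-$\mathbb R$-module or quotient of $\mathbb R$ is either trivial or all of $\mathbb R$. Your write-up simply makes the filtration bookkeeping explicit where the paper leaves it implicit.
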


\begin{proof}
The first assertion is clear. For the second,
since a nontrivial (vector)-subspace of a field must be the field itself, the only sub-$\mathbb R$-module of $\mathbb R$ other than the identity, is $\mathbb R$ itself.
\end{proof}

\section{Ordered decomposable groups of dimension 3}\label{dimension3}


Now we classify all dimension~3 ordered groups definable in an
o-minimal expansion of a real closed field satisfying the ongoing
assumptions~$(*)$ and~$(**)$. This analysis ultimately
breaks up into cases as follows. Suppose that $G$ is an ordered
group of dimension~3 in $\Rc$. By Theorem~\ref{o-minimal subgroup},
there is a convex, normal o-minimal subgroup $N$ of $G$ such that
$H:=G/N$ under the inherited ordering is one of the two definable
ordered subgroups of dimension~2 (up to isomorphism) determined in
Theorem~\ref{T:1.3}. Moreover the natural conjugation of $N$ by $H$
(bear in mind that $N$ is abelian) is a definable order-preserving
action making $N$ into a definable $H$-module. Our classification
now divides into cases depending on $H$ and on whether or not the
action is trivial, and each possibility will be treated in turn.

In view of the facts presented in Section \ref{spectral}, it is important to
understand the cohomology groups of smaller dimensional group
extensions.

\begin{lemma}\label{smaller dimensional groups}
Each of the following holds.

\begin{enumerate}
\item Let $\left(\mathbb{R},\gamma\right)$ be a definable $G$-module.  If $\gamma$ is the trivial action, then\\ $H^{0}\left(G,\mathbb{R},\gamma\right)=\mathbb{R}$. If $\gamma$ is a nontrivial action, then $H^{0}\left(G,\mathbb{R},\gamma\right)\allowbreak =\left\{ 0\right\} $.
\item Let $\left(\mathbb{R},+\right)$ be the definable $\left(\mathbb{R}^{n},+\right)$-module with the trivial action. Then
$H^{1}\left(\mathbb{R}^{n},\mathbb{R}\right)\simeq
\left(\mathbb{R}^{n},+\right)$.
\item Let $\left(\mathbb{R},+\right)$ be a definable $\mathbb{R}^{n}$-module with a nontrivial action. Then \\ $H^{1}\left(\mathbb{R}^{n},\mathbb{R}\right) =\left\{ 0\right\}$.
\item Let $\left(\mathbb{R},+\right)$ be a definable
$\left(\mathbb{R},+\right)$-module. Then $H^{2}\left(\mathbb{R},\mathbb{R}\right)=\{0\}$.

\end{enumerate}

\end{lemma}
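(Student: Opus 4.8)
The plan is to treat the four parts separately, since they concern different cohomological degrees, but in each case to work directly from the explicit coboundary formula of Definition~\ref{D:complejo} together with the classification of definable actions in Fact~\ref{P:accionesgrales}. Parts~(1)--(3) reduce to transparent computations with $0$-, $1$-, and $2$-cochains respectively, and only part~(4) requires the deeper splitting input. Throughout I use that cochains are \emph{definable} functions, which is what makes the answers finite-dimensional.

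For part~(1), I would note that $B^{0}=\{0\}$ by definition, so $H^{0}=Z^{0}=\ker\bigl(\delta\colon C^{0}\to C^{1}\bigr)$. Evaluating the coboundary formula at $n=0$ on a constant $a\in C^{0}=\mathbb{R}$ gives $\delta(a)(g)=\gamma(g)(a)-a$, so $Z^{0}$ is exactly the set of $\gamma$-invariants. When $\gamma$ is trivial every element is invariant and $H^{0}=\mathbb{R}$; when $\gamma$ is nontrivial, Fact~\ref{P:accionesgrales} writes $\gamma(g)$ as multiplication by $e^{c_{1}g_{1}+\cdots+c_{n}g_{n}}$ with some $c_i\neq 0$, and $e^{L(g)}a=a$ for all $g$ forces $a=0$, whence $H^{0}=\{0\}$. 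For part~(2), the trivial action makes $\delta(a)(g)=a-a=0$, so $B^{1}=\{0\}$, while the $1$-cocycle condition becomes $f(u+v)=f(u)+f(v)$; thus $H^{1}(\mathbb{R}^{n},\mathbb{R})$ is the group of definable homomorphisms $(\mathbb{R}^{n},+)\to(\mathbb{R},+)$. Here I invoke the standard o-minimal fact that a definable additive endomorphism of $(\mathbb{R},+)$ is multiplication by a scalar; restricting to the coordinate axes identifies each such homomorphism with a linear functional $x\mapsto c_{1}x_{1}+\cdots+c_{n}x_{n}$, giving the group isomorphism $H^{1}(\mathbb{R}^{n},\mathbb{R})\simeq(\mathbb{R}^{n},+)$.

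Part~(3) is where a small trick does the work, and I would highlight it. By Fact~\ref{P:accionesgrales} write the nontrivial action as $\gamma(u)=e^{L(u)}$ with $L(u)=c_{1}u_{1}+\cdots+c_{n}u_{n}\not\equiv 0$. A $1$-cocycle $f$ satisfies $f(u+v)=e^{L(u)}f(v)+f(u)$, and exploiting the symmetry $u+v=v+u$ of the left-hand side yields
\[
\bigl(e^{L(u)}-1\bigr)f(v)=\bigl(e^{L(v)}-1\bigr)f(u).
\]
Fixing $v_{0}$ with $L(v_{0})\neq 0$ and setting $a:=f(v_{0})/\bigl(e^{L(v_{0})}-1\bigr)$, this identity gives $f(u)=a\bigl(e^{L(u)}-1\bigr)$ for all $u$, which is precisely the coboundary $\delta(a)$. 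Hence $Z^{1}=B^{1}$ and $H^{1}(\mathbb{R}^{n},\mathbb{R})=\{0\}$. I prefer this symmetry computation because it avoids any appeal to differentiability of definable cocycles and works uniformly in $n$.

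Finally, for part~(4), I would argue that for any action $\gamma$ the group $H^{2}(\mathbb{R},\mathbb{R},\gamma)$ classifies definable extensions of $\mathbb{R}$ by $\mathbb{R}$ by Fact~\ref{ExtensionsAndCohomology}, so it suffices to show every such extension splits. By Theorem~\ref{T:1.3} every extension is, up to isomorphism, either $(\mathbb{R}\times\mathbb{R},+)$ or $\mathbb{R}\rtimes\mathbb{R}$, both of which are split; equivalently, the nontrivial (nonabelian) case splits directly, while the trivial (abelian) case is exactly the content of assumption~$(**)$. Either way $H^{2}(\mathbb{R},\mathbb{R},\gamma)=\{0\}$. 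I expect this last part to be the genuine obstacle: the abelian case is the delicate one flagged in Section~\ref{dimension2}, and it is precisely to dispose of it that assumption~$(**)$ is imposed. Everything else in the lemma is elementary cochain bookkeeping together with the two cited facts on definable actions.
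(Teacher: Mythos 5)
Your proposal is correct in substance and follows the same route as the paper: the paper disposes of items (1)--(3) by citing \cite{Ba} together with exactly the descriptions you compute from scratch ($H^{0}$ as the invariants $M^{G}$, $H^{1}$ as crossed homomorphisms modulo principal ones), and it proves item (4) by combining Fact~\ref{ExtensionsAndCohomology} with the assumptions --- i.e.\ the known splitting of nonabelian extensions plus~$(**)$ --- which is precisely your ``equivalently'' clause. Your explicit computations (the invariance calculation for (1), the identification of $Z^{1}$ with definable homomorphisms for (2), and the symmetry identity $\bigl(e^{L(u)}-1\bigr)f(v)=\bigl(e^{L(v)}-1\bigr)f(u)$ showing every cocycle is principal for (3)) simply fill in what the paper outsources, and they are sound; the trick in (3) is a clean way to see $Z^{1}=B^{1}$ without any differentiability argument.

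Two caveats. First, part (1) of the lemma is stated for an \emph{arbitrary} definable group $G$, while Fact~\ref{P:accionesgrales} only covers $G=\mathbb{R}^{n}$; as written your argument does not cover, say, $G=\mathbb{R}\rtimes\mathbb{R}$, where the lemma is later applied. The fix is one line: every definable automorphism of $(\mathbb{R},+)$ is multiplication by a nonzero scalar, so a nontrivial action has some $\gamma(g_{0})$ acting as multiplication by $\lambda\neq 1$, and $\lambda a=a$ forces $a=0$. Second, your primary appeal to Theorem~\ref{T:1.3} in part (4) is both insufficient and circular: a group isomorphism between the total space of an extension and a semidirect product does not make the extension split --- vanishing of $H^{2}$ requires equivalence \emph{of extensions}, which fixes $N$ and induces the identity on the quotient --- and in the paper the vanishing $H^{2}(\mathbb{R},\mathbb{R},\gamma)=\{0\}$ is established from~$(**)$ \emph{before} Theorems~\ref{T:1.2} and~\ref{T:1.3}, which depend on it. Fortunately your second justification (nonabelian extensions split; abelian ones split by~$(**)$) is the paper's actual argument, so the proof stands once the Theorem~\ref{T:1.3} detour is deleted.
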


\begin{proof}
The first three items are proved in \cite{Ba}. They follow easily
from the facts that
$H^{0}\left(G,M,\gamma\right)=M^{G}=\left\{ m\in M:\left(\forall
g\in G\right)\left(\gamma\left(g\right)x=x\right)\right\} $ and that
$H^{1}\left(G,M,\gamma\right)$ is the set of crossed homomorphisms
from $G$ to $M$ modulo principal homomorphisms, as in the general
group homomorphism setting. 

The last item follows immediately from Theorems \ref{2-Cohomology}
(or Fact \ref{ExtensionsAndCohomology}) and our assumptions.
\end{proof}

We also shall avail ourselves of the following facts (\cite[Propositions 2.3, 2.4]{Ba}), both easy consequences of the o-minimality of $\mathcal R$.

\begin{fact}\label{P:homen1iguales}
Let $G$ be a definable group in $\mathcal R$ whose operation is denoted multiplicatively. Let $f$, $g$ be two definable homomorphisms from $\left(R,+\right)$ into $G$. If $f\left(1\right)=g\left(1\right)$ then $f=g$.
\end{fact}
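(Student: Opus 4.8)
The plan is to study the definable set on which $f$ and $g$ agree and to show, via o-minimality, that it must be all of $R$. Set
\[
A := \{x \in R : f(x) = g(x)\}.
\]
Since $f$ and $g$ are definable, $A$ is a definable subset of $R$. First I would verify that $A$ is a subgroup of $(R,+)$: we have $f(0)=e=g(0)$, so $0\in A$; if $x,y\in A$ then $f(x+y)=f(x)f(y)=g(x)g(y)=g(x+y)$, so $x+y\in A$; and $f(-x)=f(x)^{-1}=g(x)^{-1}=g(-x)$, so $-x\in A$. The hypothesis $f(1)=g(1)$ gives $1\in A$, and since $A$ is a subgroup this forces $n=n\cdot 1\in A$ for every $n\in\mathbb{Z}$; in particular $A$ is infinite.

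The core of the argument is the standard fact that the only definable subgroups of $(R,+)$ are $\{0\}$ and $R$ itself. Since $A$ is infinite and definable, by o-minimality it is a finite union of points and open intervals, hence contains an open interval $I=(a,b)$ with $a<b$. Fixing $p\in I$, for any $c\in A$ the translate $(c-p)+I\subseteq A$ is an interval containing $c$, because $c-p\in A$ and $p\in I$; hence $A$ is open. An open subgroup is automatically closed, since its complement is a union of (open) cosets. As $(R,+)$ is definably connected—its only definable clopen subsets being $\emptyset$ and $R$—we conclude $A=R$, that is, $f(x)=g(x)$ for all $x\in R$.

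The point I would emphasize is that o-minimality is essential and carries the whole argument: from $f(1)=g(1)$ pure group theory yields agreement only on $\mathbb{Z}$, and not even on $\mathbb{Q}$, since in a general group the equation $z^{q}=a$ need not have a unique solution and so one cannot propagate agreement to the divisible hull by algebra alone. It is precisely the o-minimal structure of $\mathcal{R}$—through the finiteness of definable subsets of $R$ and the definable connectedness of the line—that upgrades \emph{agreement on an infinite definable subgroup} to \emph{agreement everywhere}.
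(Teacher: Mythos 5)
Your proof is correct. There is nothing in the paper to compare it against: the statement appears only as a Fact quoted from \cite{Ba} (Propositions 2.3 and 2.4) with no proof given, just the remark that it is an easy consequence of o-minimality of $\mathcal R$; your argument---the agreement set $A=\{x\in R : f(x)=g(x)\}$ is an infinite definable subgroup of $(R,+)$, which by o-minimality contains an interval, is therefore open, hence clopen, hence equals $R$ by definable connectedness of the line---is precisely the standard argument that remark has in mind, it is complete, and your closing observation about why pure group theory only propagates agreement from $1$ to $\mathbb{Z}$ (non-uniqueness of roots blocking the step to the divisible hull) correctly identifies where o-minimality is genuinely needed.
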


\begin{fact}\label{P:homdefcont}
Let $\left(G,\oplus\right)$ and $\left(G^{\prime},\odot\right)$ definable groups in $\mathcal R$ with
$G\subseteq R^m$, $G^{\prime}\subseteq R^{n}$, and $G$ infinite. If $\oplus$ and $\odot$ are continuous and $f:\left(G,\oplus\right)\rightarrow\left(G^{\prime},\odot\right)$ is a definable homomorphism, then $f$ is continuous.
\end{fact}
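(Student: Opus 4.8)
The plan is to combine the standard o-minimal fact that a definable function is continuous off a set of strictly smaller dimension with the homogeneity supplied by the group operations. The only genuinely o-minimal ingredient is the first; everything after it is elementary group theory, applied uniformly so as to avoid any appeal to sequences.

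First I would produce a single point of continuity. By cell decomposition in the o-minimal structure $\mathcal R$, the definable set $D\subseteq G$ of points at which $f$ fails to be continuous has $\dim D<\dim G$. Since $G$ is infinite we have $\dim G\geq 1$, so $D\neq G$, and hence there is a point $a\in G$ at which $f$ is continuous. Next I would record that translations are homeomorphisms: because $\oplus\colon G\times G\to G$ is continuous, each left translation $L_g\colon x\mapsto g\oplus x$ is continuous, and its inverse is the continuous map $L_{g^{-1}}$, so $L_g$ is a homeomorphism of $G$; applying the same reasoning to $\odot$, each $L'_h\colon y\mapsto h\odot y$ is a homeomorphism of $G'$. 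Note that only the \emph{existence} of inverse elements is used here, a purely group-theoretic fact, so no continuity of inversion is required.

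Then I would transfer continuity from $a$ to an arbitrary $b\in G$ using the homomorphism property. Put $c:=b\oplus a^{-1}$, so that $c\oplus a=b$ and $c^{-1}\oplus b=a$. For every $x\in G$,
\[
f(x)=f\bigl(c\oplus(c^{-1}\oplus x)\bigr)=f(c)\odot f(c^{-1}\oplus x),
\]
which, read as an identity of maps on $G$, says
\[
f=L'_{f(c)}\circ f\circ L_{c^{-1}}.
\]
The homeomorphism $L_{c^{-1}}$ sends $b$ to $a$, and $f$ is continuous at $a$, so $f\circ L_{c^{-1}}$ is continuous at $b$; post-composing with the everywhere-continuous homeomorphism $L'_{f(c)}$ preserves continuity at $b$. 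Hence $f$ is continuous at $b$, and as $b$ was arbitrary, $f$ is continuous on all of $G$.

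There is no serious obstacle once generic continuity is in hand; the one point I would emphasize is that the final step must be carried out at the level of neighborhoods (i.e., via the identity $f=L'_{f(c)}\circ f\circ L_{c^{-1}}$ and composition of maps continuous at the relevant points), rather than through convergent sequences. This is precisely what keeps the argument valid over an arbitrary real closed field $R$, whose order topology need not be first-countable and for which sequential continuity would be insufficient.
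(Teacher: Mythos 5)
Your proof is correct. The paper itself gives no argument for this statement---it is quoted as a fact from the reference \cite{Ba}, described there as an easy consequence of o-minimality---and your argument is the standard one for exactly this kind of result: generic continuity of definable maps (the discontinuity locus is definable of dimension $<\dim G$, hence proper), translations are homeomorphisms because $\oplus$ and $\odot$ are continuous, and the identity $f=L'_{f(c)}\circ f\circ L_{c^{-1}}$ propagates continuity from one point to all of $G$. Your closing remark is also well taken: working with neighborhoods rather than sequences is what makes the argument valid over an arbitrary real closed field, which is the setting of $\mathcal R$ here.
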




\medskip

We will now start with a case by case analysis of the different
possible 3-dimensional ordered groups.

\subsection{$H=\mathbb{R}\times\mathbb{R}$ and the action of $H$
on $\mathbb{R}$ is trivial}\label{HeisSect}\hspace*{\fill} \\

We wish to prove that
$H^{2}\left(\mathbb{R}^{2},\mathbb{R}\right)\simeq\left(\mathbb{R},+\right)$.
As a first step we apply
Lemma \ref{smaller dimensional groups} and Corollary \ref{all you need} to bound this cohomology group.

\begin{lemma}\label{T:2.2}
Let $\left(\mathbb{R},+\right)$ be a definable $\left(\mathbb{R}^{2},+\right)$-module  with the trivial action. Then
$H^{2}\left(\mathbb{R}^{2},\mathbb{R}\right)\leq\left(\mathbb{R},+\right)$.
\end{lemma}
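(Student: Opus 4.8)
The plan is to apply Corollary~\ref{all you need} to the group $G=\mathbb{R}^2$ acting trivially on $M=(\mathbb{R},+)$, with the normal subgroup $K=\mathbb{R}\times\{0\}$ (automatically normal, since $G$ is abelian), so that $G/K\simeq\mathbb{R}$. First I would record that because the action $\gamma$ is trivial we have $M^K=M=\mathbb{R}$; thus the three groups appearing in Corollary~\ref{all you need} become $H^{2}(\mathbb{R},\mathbb{R})$, $H^{1}\bigl(\mathbb{R},H^{1}(K,\mathbb{R})\bigr)$, and $\bigl(H^{2}(K,\mathbb{R})\bigr)^{G/K}$, computed with respect to the relevant induced actions.

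Next I would evaluate each of these using Lemma~\ref{smaller dimensional groups}. By item~(4) of that lemma, $H^{2}(\mathbb{R},\mathbb{R})=\{0\}$, and likewise $\bigl(H^{2}(K,\mathbb{R})\bigr)^{G/K}=\{0\}$ since $H^{2}(\mathbb{R},\mathbb{R})$ already vanishes. By item~(2) with $n=1$ we have $H^{1}(K,\mathbb{R})\simeq(\mathbb{R},+)$, and again by item~(2) the group $H^{1}\bigl(\mathbb{R},H^{1}(K,\mathbb{R})\bigr)$ is isomorphic to $\mathbb{R}$, \emph{provided} $G/K$ acts trivially on $H^{1}(K,\mathbb{R})$.

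The one point that requires care---and the only real obstacle---is verifying that the induced action of $G/K$ on $H^{1}(K,\mathbb{R})$ is trivial, so that item~(2) of Lemma~\ref{smaller dimensional groups} genuinely applies. This induced action is built from the conjugation action of $G$ on $K$ together with the action $\gamma$ of $G$ on $M$. Since $G=\mathbb{R}^2$ is abelian, conjugation by any element of $G$ restricts to the identity automorphism of $K$, and since $\gamma$ is trivial, $G$ acts trivially on $M$; hence the induced action of $G/K$ on $H^{1}(K,\mathbb{R})$ is trivial, exactly as needed to invoke item~(2).

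With these computations in hand, exactly one of the three groups of Corollary~\ref{all you need} is isomorphic to $\mathbb{R}$ while the other two are trivial. Case~(2) of that corollary then yields that $H^{2}(\mathbb{R}^2,\mathbb{R})$ is either $\{0\}$ or isomorphic to $\mathbb{R}$, that is, $H^{2}(\mathbb{R}^2,\mathbb{R})\leq(\mathbb{R},+)$, which is the asserted bound. (The matching lower bound, establishing the isomorphism $H^{2}(\mathbb{R}^2,\mathbb{R})\simeq(\mathbb{R},+)$, will be taken up separately; here only the upper bound is claimed.)
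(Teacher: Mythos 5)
Your proof is correct and follows essentially the same route as the paper: the same choice of a one-dimensional coordinate subgroup $K\simeq(\mathbb{R},+)$ (the paper uses $\{0\}\times\mathbb{R}$, you use $\mathbb{R}\times\{0\}$, which is immaterial by symmetry), the same three cohomology computations via Lemma~\ref{smaller dimensional groups}, and the same appeal to case~(2) of Corollary~\ref{all you need}. Your explicit verification that $G/K$ acts trivially on $H^{1}(K,\mathbb{R})$ fills in a point the paper dismisses as ``easy to see,'' which is a welcome addition but not a different argument.
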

\begin{proof}
Let $K=\left\{ \left(0,x\right)\vert x\in\mathbb{R}\right\}
\trianglelefteq\left(\mathbb{R}^{2},+\right)$. It is easy to see
for all $j\geq 0$ that $H^{j}\left(K,\mathbb{R}\right)$ is a
$\nicefrac{\mathbb{R}^{2}}{K}$-module with trivial action.
By Lemma \ref{smaller dimensional groups} we have

\begin{eqnarray*}
H^{2}\left(\nicefrac{\mathbb{R}^{2}}{K},\mathbb{R}^{K}\right)&=& H^{2}\left(\mathbb{R},\mathbb{R}\right)=\left\{ 0\right\} \\
H^{1}\left(\nicefrac{\mathbb{R}^{2}}{K},H^{1}\left(K,\mathbb{R}\right)\right)&=& H^{1}\left(\mathbb{R},\mathbb{R}\right)=\left(\mathbb{R},+\right) \\
H^{0}\left(\nicefrac{\mathbb{R}^{2}}{K},H^{2}\left(K,\mathbb{R}\right)\right)&=& H^{2}\left(\mathbb{R},\mathbb{R}\right)^{\mathbb{R}}=\left\{0\right\}.
\end{eqnarray*}
The lemma now follows by Corollary \ref{all you need}.
\end{proof}

To complete the proof that
$H^{2}\left(\mathbb{R}^{2},\mathbb{R}\right)\simeq\left(\mathbb{R},+\right)$ we produce an extension that is not isomorphic to the trivial one.



\begin{definition}[{The Heisenberg group}]
The Heisenberg group is the group $SUT\left(3,\mathbb{R}\right)$,
the set of all the upper triangular matrices with real entries of the form
\[
\left[\begin{array}{ccc}
1 & x & z\\
0 & 1 & y\\
0 & 0 & 1\end{array}\right]\]
with the group operation given by matrix multiplication.
\end{definition}

\noindent For the calculations that we carry below, we use the following presentation of the Heisenberg group. Let the operation $\cdot$ on $\mathbb{R}^{3}$  be given by
\[
\left(x_{1},y_{1},z_{1}\right)\cdot\left(x_{2},y_{2},z_{2}\right) =
\left(x_{1}+x_{2},y_{1}+y_{2},z_{1}+z_{2}+\frac{1}{2}
\det\left[\begin{array}{cc} x_{1} & y_{1}\\x_{2} &
y_{2}\end{array}\right] \right).
\]
With this operation $\left(\mathbb{R}^{3},\cdot\right)$ is a group that we denote by
$\mathcal{G}_{\textrm{Heis}}$.

Observe that $SUT\left(3,\mathbb{R}\right)\simeq\mathcal{G}_{\textrm{Heis}}$ via the isomorphism
\[
\left[\begin{array}{ccc}
1 & x & z\\
0 & 1 & y\\
0 & 0 &
1\end{array}\right]\mapsto\left(x,y,z-\frac{1}{2}xy\right).\]

The construction of $\mathcal{G}_{\textrm{Heis}}$ can be generalized as follows.

\begin{definition}

For $c\in\mathbb{R}\setminus\left\{ 0\right\} $, let $E_{c}$ be the group on $\mathbb{R}^{2}\times\mathbb{R}$ with product given by
\[
\left(\left(x_{1},y_{1}\right), z_{1}\right)\cdot\left(\left(x_{2},y_{2}\right), z_{2}\right)
= \left(\left(x_{1}+x_{2},y_{1}+y_{2}\right), z_{1}+z_{2}+c\det\left[\begin{array}{cc}x_{1} & y_{1}\\
x_{2} & y_{2}\end{array}\right]\right).
\]

\end{definition}

The next assertion is an easy calculation.

\begin{proposition}\label{Heissenberg}
Let $i\left(z\right)=\left(\left(0,0\right), z\right)$,
$\pi\left(\left(x,y\right), z\right)=\left(x,y\right)$, and
$c\in\mathbb{R}\setminus\left\{ 0\right\} $. Then
$0\rightarrow\mathbb{R}\overset{i}{\rightarrow}
E_{c}\overset{\pi}{\rightarrow}\mathbb{R}^{2}\rightarrow0$ is a definable extension of $\mathbb{R}^{2}$ by $\mathbb{R}$.
\end{proposition}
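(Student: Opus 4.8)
The plan is to verify directly that the map $\pi$ is a surjective group homomorphism with kernel exactly $i(\mathbb{R})$, and that $i$ is an injective homomorphism, thereby exhibiting the claimed short exact sequence; all the required maps are plainly definable since the product on $E_c$ is given by a polynomial (indeed rational) formula in the coordinates. First I would confirm that $E_c$ is genuinely a group under the stated operation. Associativity is the only nonobvious axiom, and it reduces to checking that the determinant-based term $c\det\begin{bmatrix} x_1 & y_1 \\ x_2 & y_2 \end{bmatrix}$ behaves like a $2$-cocycle on $(\mathbb{R}^2,+)$; concretely, expanding both $\bigl((p_1,z_1)(p_2,z_2)\bigr)(p_3,z_3)$ and $(p_1,z_1)\bigl((p_2,z_2)(p_3,z_3)\bigr)$ with $p_i=(x_i,y_i)$, the $z$-coordinates agree because the bilinear form $B(p,q):=\det\begin{bmatrix} x_p & y_p \\ x_q & y_q \end{bmatrix}$ satisfies the cocycle identity $B(p_2,p_3)-B(p_1+p_2,p_3)+B(p_1,p_2+p_3)-B(p_1,p_2)=0$, which follows at once from bilinearity of $B$. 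The identity element is $((0,0),0)$ and the inverse of $((x,y),z)$ is $((-x,-y),-z)$, since the determinant term vanishes when the two rows are negatives of one another.

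Next I would check the homomorphism and exactness claims, which are routine. The map $\pi((x,y),z)=(x,y)$ is a homomorphism because the first two coordinates of the product are simply the componentwise sums $x_1+x_2$ and $y_1+y_2$, matching the operation on $(\mathbb{R}^2,+)$; it is clearly surjective. The map $i(z)=((0,0),z)$ is a homomorphism into $E_c$ because on elements with vanishing $(x,y)$-coordinates the determinant term is identically zero, so the product reduces to $z_1+z_2$ in the last coordinate; it is manifestly injective. Finally $\ker\pi=\{((0,0),z)\mid z\in\mathbb{R}\}=i(\mathbb{R})=\im i$, giving exactness at $E_c$, while injectivity of $i$ gives exactness at $\mathbb{R}$ and surjectivity of $\pi$ gives exactness at $\mathbb{R}^2$. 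Since all the defining formulas are polynomial, every map and the group $E_c$ itself are definable in $\mathcal{R}$, so this is a definable extension of $\mathbb{R}^2$ by $\mathbb{R}$ in the sense of the earlier definition.

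The only genuine content is the associativity verification, and even that is not a real obstacle: it is the standard fact that an alternating bilinear form on an abelian group furnishes a $2$-cocycle, so I expect no difficulty. I would present the associativity check compactly by isolating the cocycle identity for $B$ rather than expanding all three coordinates by brute force, and leave the remaining verifications as the stated ``easy calculation.'' The statement of the proposition does not assert an action (the extension is central, since $i(\mathbb{R})$ lands in the center because $B(p,0)=B(0,p)=0$ means the last coordinate commutes past everything), so I would note in passing that the action $\gamma$ here is trivial, consistent with the ambient subsection hypothesis that $H=\mathbb{R}\times\mathbb{R}$ acts trivially on $\mathbb{R}$.
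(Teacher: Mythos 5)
Your proof is correct and follows exactly the route the paper has in mind when it dismisses this as ``an easy calculation'': direct verification that the determinant term is a (bilinear, hence cocycle-satisfying) $2$-cocycle, that $i$ and $\pi$ are homomorphisms with $\ker\pi=\operatorname{im} i$, and that everything is polynomial and hence definable. Your added observation that $i(\mathbb{R})$ is central, so the induced action is trivial as required by the ambient subsection, is a worthwhile detail the paper's statement leaves implicit.
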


We thus have

\begin{theorem}\label{T:2.3}
Let $\left(\mathbb{R},+\right)$ be a definable $\left(\mathbb{R}^{2},+\right)$-module with the trivial action. Then
$H^{2}\left(\mathbb{R}^{2},\mathbb{R}\right)\simeq\left(\mathbb{R},+\right)$.
\end{theorem}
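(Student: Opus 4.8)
We need to show $H^2(\mathbb{R}^2,\mathbb{R})\simeq(\mathbb{R},+)$ for the trivial action. We already have the upper bound from Lemma~\ref{T:2.2}, namely $H^2(\mathbb{R}^2,\mathbb{R})\leq(\mathbb{R},+)$. By Corollary~\ref{all you need}(2), this group is therefore either trivial or isomorphic to $\mathbb{R}$. So the entire content of the theorem is to rule out triviality, and that is exactly what the family $E_c$ (Proposition~\ref{Heissenberg}) is built to do.

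**Plan.** The approach is to exhibit a definable extension of $\mathbb{R}^2$ by $\mathbb{R}$ (with trivial action, i.e. a central extension) that does not split, equivalently whose associated cohomology class is nonzero. The Heisenberg group $\mathcal{G}_{\mathrm{Heis}}=E_{1/2}$ is the natural candidate. First I would observe that the action here is trivial: the copy of $\mathbb{R}$ embedded via $i(z)=((0,0),z)$ is central in $E_c$, which one reads off immediately from the product formula since the determinant term does not involve $z_1,z_2$ and conjugating a central element gives it back. Thus $E_c$ is genuinely an extension with trivial action, matching the module structure in the statement.

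**Key step: nonsplitting.** The crux is that $E_c$ is not isomorphic (as an extension) to the trivial one $(\mathbb{R}^3,+)$. The cleanest way is to compute the commutator: for $c\neq 0$ the commutator $[((x_1,y_1),0),((x_2,y_2),0)]$ equals $((0,0),2c\det[\begin{smallmatrix}x_1&y_1\\x_2&y_2\end{smallmatrix}])$, which is nonzero whenever the two vectors are linearly independent. Hence $E_c$ is nonabelian, while $(\mathbb{R}^3,+)$ is abelian, so the two extensions are not isomorphic even as abstract groups. By Fact~\ref{ExtensionsAndCohomology} the group of equivalence classes of extensions is isomorphic to $H^2(\mathbb{R}^2,\mathbb{R},\gamma)$, so a nonsplit extension forces a nonzero cohomology class and hence $H^2(\mathbb{R}^2,\mathbb{R})\neq\{0\}$. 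Combined with the bound from Lemma~\ref{T:2.2} (refined via Corollary~\ref{all you need}(2)), we conclude $H^2(\mathbb{R}^2,\mathbb{R})\simeq(\mathbb{R},+)$.

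**Main obstacle.** The only genuinely delicate point is logical rather than computational: one must be sure the extension $E_c$ is \emph{definable} so that it represents a class in the \emph{definable} cohomology $H^2$, and that the correspondence of Fact~\ref{ExtensionsAndCohomology} is being applied with the correct trivial action $\gamma$. Definability is immediate since the product is given by a polynomial (the determinant) in the coordinates. The commutator computation is the routine calculation I would not grind through in full, but it is where the nontriviality ultimately resides; everything else is bookkeeping assembling Lemma~\ref{T:2.2}, Corollary~\ref{all you need}(2), Proposition~\ref{Heissenberg}, and Fact~\ref{ExtensionsAndCohomology}.
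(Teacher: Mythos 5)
Your proposal is correct and follows essentially the same route as the paper: the dichotomy $\{0\}$ or $\mathbb{R}$ coming from Lemma~\ref{T:2.2} together with the $\mathbb{R}$-module structure on $H^{2}$ (which is exactly the content of Corollary~\ref{all you need}(2)), plus the Heisenberg-type extension $E_c$ of Proposition~\ref{Heissenberg} as the witness of nontriviality. Your explicit commutator computation $\bigl[((x_1,y_1),0),((x_2,y_2),0)\bigr]=\bigl((0,0),2c\det\bigl[\begin{smallmatrix}x_1&y_1\\x_2&y_2\end{smallmatrix}\bigr]\bigr)$, showing $E_c$ is nonabelian and hence not equivalent to the split (abelian) extension, is precisely the detail the paper leaves implicit in calling $E_c$ ``nontrivial.''
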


\begin{proof}
Recall that $H^{2}\left(\mathbb{R}^{2},\mathbb{R}\right)$
carries the structure of an $\mathbb R\/$-vector space. Indeed, for $[f]$ a nonzero element of
$H^{2}\left(\mathbb{R}^{2},\mathbb{R}\right)$ let $\alpha [f]:= [\alpha f]$, where $\alpha\in\mathbb R$. Combining this fact and Lemma~\ref{T:2.2}
we see that $H^{2}\left(\mathbb{R}^{2},\mathbb{R}\right)$ is either $\{0\}$ or $\mathbb R$. Proposition~\ref{Heissenberg} above provides
a nontrivial extension, hence $H^{2}\left(\mathbb{R}^{2},\mathbb{R}\right)\simeq\mathbb R$.
\end{proof}

One can in fact show more: the family of nontrivial extensions is given
precisely by the family of groups $E_c$.

\begin{proposition}\label{P:EcEd} Let $c,d\in\mathbb{R}\setminus\left\{ 0\right\} $. If $c\neq d$ then the extensions
$\left(E_{c},\pi\right)$ and $\left(E_{d},\pi\right)$ are not definably equivalent.
\end{proposition}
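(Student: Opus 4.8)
The plan is to translate the statement into the cohomological framework established above and then exploit a symmetry. First I would read off from the definition of $E_c$ that, under the correspondence of Fact~\ref{ExtensionsAndCohomology}, the extension $(E_c,\pi)$ is the one determined by the definable $2$-cocycle $f_c\in Z^2(\mathbb{R}^2,\mathbb{R})$ (with trivial action) given by $f_c(u,v)=c\,\omega(u,v)$, where $\omega\big((x_1,y_1),(x_2,y_2)\big)=x_1y_2-x_2y_1=\det\left[\begin{smallmatrix}x_1&y_1\\x_2&y_2\end{smallmatrix}\right]$. Since a definable equivalence of extensions is exactly an isomorphism of the middle groups that is the identity on the kernel $\mathbb{R}$ and on the quotient $\mathbb{R}^2$, it corresponds precisely to equality of cohomology classes. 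Hence $(E_c,\pi)$ and $(E_d,\pi)$ are definably equivalent if and only if $[f_c]=[f_d]$ in $H^2(\mathbb{R}^2,\mathbb{R})$, that is, if and only if $f_c-f_d=(c-d)\,\omega$ is a coboundary.

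The key step is then to show that $(c-d)\omega$ is a coboundary only when $c=d$. For this I would use that, for the trivial action, every coboundary $\delta\beta$ with $\beta\colon\mathbb{R}^2\to\mathbb{R}$ has the form $\delta\beta(u,v)=\beta(u)+\beta(v)-\beta(u+v)$, which is \emph{symmetric} under interchanging $u$ and $v$. By contrast $\omega$ is alternating, hence \emph{antisymmetric}: $\omega(v,u)=-\omega(u,v)$. Thus if $(c-d)\omega=\delta\beta$, the function $(c-d)\omega$ would be simultaneously symmetric and antisymmetric, forcing $(c-d)\omega\equiv 0$; evaluating at $u=(1,0)$ and $v=(0,1)$ gives $c-d=0$. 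Therefore $c\neq d$ implies $[f_c]\neq[f_d]$, and the two extensions are not definably equivalent.

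Alternatively, and this is the route that best fits the preceding development, I would invoke Theorem~\ref{T:2.3}, which gives $H^2(\mathbb{R}^2,\mathbb{R})\simeq(\mathbb{R},+)$ together with the $\mathbb{R}$-vector space structure $\alpha[f]:=[\alpha f]$. Under this structure $[f_c]=c[\omega]$, and $[\omega]\neq 0$ because $E_1$ is a nontrivial extension (Proposition~\ref{Heissenberg} and the proof of Theorem~\ref{T:2.3}, bounded above by Lemma~\ref{T:2.2}). The map $c\mapsto c[\omega]$ is then a nonzero linear map from $\mathbb{R}$ into the one-dimensional space $H^2(\mathbb{R}^2,\mathbb{R})$, hence injective, so $[f_c]=[f_d]$ forces $c=d$. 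I do not expect a serious obstacle here; the only point needing care is the bookkeeping that identifies a \emph{definable equivalence of extensions} with cohomologous cocycles rather than with a mere abstract isomorphism of the middle groups. Once that identification is secured through Fact~\ref{ExtensionsAndCohomology}, either the symmetry argument or the one-dimensionality of $H^2$ closes the proof at once.
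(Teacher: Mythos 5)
Your proof is correct, and it takes a genuinely different route from the paper's. The paper argues directly with groups: assuming a definable homomorphism $\varphi\colon E_{c}\to E_{d}$ commuting with $i$ and $\pi$, it uses Fact~\ref{P:homen1iguales} to pin down $\varphi$ on the two coordinate copies of $\mathbb{R}$, derives the closed form $\varphi\left(\left(x,y\right),z\right)=\left(\left(x,y\right),z+xs+yt+xy\left(d-c\right)\right)$, and reaches a contradiction by comparing $\varphi\left(\left(\left(1,1\right),1\right)^{2}\right)$ with $\left(\varphi\left(\left(1,1\right),1\right)\right)^{2}$. You instead argue at the level of cocycles: $E_{c}$ is the extension attached to $c\,\omega$, where $\omega$ is the determinant form, so by Fact~\ref{ExtensionsAndCohomology} definable equivalence of $\left(E_{c},\pi\right)$ and $\left(E_{d},\pi\right)$ means exactly that $\left(c-d\right)\omega$ is a coboundary; since every coboundary for the trivial action of an abelian group is a symmetric function while $\omega$ is alternating, this forces $c=d$. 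Your argument is shorter and purely algebraic---it needs no definability or o-minimality (the paper's proof needs Fact~\ref{P:homen1iguales}), it applies verbatim to abstract, not necessarily definable, equivalences, and it yields injectivity of $c\mapsto\left[E_{c}\right]$ on all of $\mathbb{R}$, which in particular supplies the nontriviality of the extensions $E_{c}$, $c\neq0$, that Theorem~\ref{T:2.3} quietly relies on. What it costs is that everything hinges on reading Fact~\ref{ExtensionsAndCohomology} precisely: equivalence of extensions must mean an isomorphism over the kernel and the quotient, not mere isomorphism of the middle groups---Proposition~\ref{P:2.4} shows the latter cannot distinguish the $E_{c}$---and your explicit bookkeeping on this point is exactly the right safeguard. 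Your alternative ending via the one-dimensionality of $H^{2}\left(\mathbb{R}^{2},\mathbb{R}\right)$ from Theorem~\ref{T:2.3} is also sound and not circular, though note that Proposition~\ref{Heissenberg} by itself asserts only that $E_{1}$ is an extension, not that it is nontrivial; it is the symmetry argument (or the nonabelianness of $E_{1}$) that actually establishes that last point.
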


\begin{proof}
For a contradiction suppose that there is a definable homomorphism  $\varphi:E_{c}\rightarrow E_{d}$ making the diagram below commute:

\begin{displaymath}
\xymatrix{
    0 \ar[r] & \mathbb{R} {\ar[r]_i} {\ar[dr]_i} & E_{c} {\ar[d]_{\varphi}} {\ar[r]_{\pi}} & \mathbb{R}^{2} \ar[r] & 0\\
     &  & E_{d} {\ar[ur]_{\pi}}}.
\end{displaymath}

As
$\pi\circ\varphi\left(\left(\left(1,0\right),0\right)\right)=\left(1,0\right)$
and
$\pi\circ\varphi\left(\left(\left(0,1\right), 0\right)\right)=\left(0,1\right)$,
it follows that
$\varphi\left(\left(\left(1,0\right),0\right)\right)=\left(\left(1,0\right), s\right)$
and
$\varphi\left(\left(\left(0,1\right), 0\right)\right)=\left(\left(0,1\right), t\right)$
for some $t,s\in\mathbb{R}$. Let the definable homomorphisms
 $h_{1},h_{2}:\mathbb{R}\rightarrow E_{d}$ be defined by
$h_{1}\left(x\right)=\left(\left(x,0\right), xs\right)$,
$h_{2}\left(x\right)=\left(\left(0,x\right), xt\right)$. Since the subgroup of
$E_c$ consisting of all elements of the form $\left(\left(y,0\right), 0\right)$ is isomorphic to
$(\mathbb R, +)$, and we have
$h_{1}\left(1\right)=\varphi\left(\left(\left(1,0\right), 0\right)\right)$
and
$h_{2}\left(1\right)=\varphi\left(\left(\left(0,1\right), 0\right)\right)$,
Fact~\ref{P:homen1iguales} implies that
$\varphi\left(\left(\left(x,0\right), 0\right)\right)=\left(\left(x,0\right), xs\right)$
and
$\varphi\left(\left(\left(0,x\right), 0\right)\right)=\left(\left(0,x\right), xt\right)$
for all $x\in\mathbb{R}$.

In $E_{c}$ we have
\[\left(\left(0,0\right), z-cxy\right)\cdot \left(\left(x,0\right), 0\right)\cdot \left(\left(0,y\right), 0\right)
=\left(\left(x,y\right)z\right),\]
which, since $\varphi\left(\left(0,0\right), w\right)=\left(\left(0,0\right), w\right)$ for all $w\in \mathbb R$, implies that

\begin{eqnarray*}
\varphi\left(\left(x,y\right), z\right) &=&
\varphi\left(\left(0,0\right), z-cxy\right)\cdot \varphi\left(\left(x,0\right), 0\right)\cdot \varphi\left(\left(0,y\right), 0\right)  \\
&=&\left(\left(0,0\right), z-cxy,\right)\cdot \left(\left(x,0\right), xs\right)\cdot \left(\left(0,y\right),yt\right) \\
&= &\left(\left(x,y\right), z+xs+yt+xy\left(d-c\right)\right).
\end{eqnarray*}

As
\[
\varphi\left(\left(\left(1,1\right), 1\right)^{2}\right) =
\varphi\left(\left(2,2\right), 2\right) =
\left(\left(2,2\right), 2\left(1+s+t\right)+4\left(d-c\right)\right),
\]
and
\[
\left(\varphi\left(\left(1,1\right), 1\right)\right)^{2} =
\left(\left(1,1\right), 1+s+t+d-c\right)^{2} =
\left(\left(2,2\right), 2\left(1+s+t\right)+2\left(d-c\right)\right),\]
it follows that $d=c$, a contradiction, whence the proof is complete.
\end{proof}

Returning to our original problem of understanding all
ordered groups of dimension~3, we actually know that
\emph{as groups} all the $E_c$ for $c\neq 0$ are isomorphic. (Recall that we
already know that for any order preserving action---as indeed the trivial
one is---the lexicographic order makes any extension an ordered
group.)

\begin{proposition}\label{P:2.4}
Let $\left(U,i,\pi\right)$ be a  representative of a non-zero class in \\ $\textrm{Ext}\left(\mathbb{R}^{2},\mathbb{R}\right)$, the group of extensions of $\mathbb{R}^{2}$ by $\mathbb{R}$, with
$\mathbb{R}$ a definable $\mathbb{R}^{2}$-module under the trivial action. Then the group $U$ is definably  isomorphic to
$\mathcal{G}_{\textrm{Heis}}.$ \end{proposition}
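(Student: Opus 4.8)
The plan is to leverage the full cohomological picture already assembled: by Theorem~\ref{T:2.3} the group $H^{2}\left(\mathbb{R}^{2},\mathbb{R}\right)$ is (isomorphic to) the $1$-dimensional $\mathbb{R}$-vector space $\mathbb{R}$, and by Fact~\ref{ExtensionsAndCohomology} it is identified with $\textrm{Ext}\left(\mathbb{R}^{2},\mathbb{R}\right)$. The first step is to observe that the family $\{E_{c}\}$ already realizes \emph{every} cohomology class. Indeed, the $2$-cocycle attached to $E_{c}$ by Proposition~\ref{Heissenberg} is $f_{c}\left(\left(x_{1},y_{1}\right),\left(x_{2},y_{2}\right)\right)=c\det\left[\begin{smallmatrix}x_{1}&y_{1}\\x_{2}&y_{2}\end{smallmatrix}\right]=c\,f_{1}$, so under the $\mathbb{R}$-vector space structure on $H^{2}$ described in the proof of Theorem~\ref{T:2.3} we have $\left[f_{c}\right]=c\left[f_{1}\right]$. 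Since $\left[f_{1}\right]\neq 0$ (the extension $E_{1}$ is nontrivial) and $H^{2}\left(\mathbb{R}^{2},\mathbb{R}\right)$ is one-dimensional, $\left[f_{1}\right]$ is a basis, and hence $c\mapsto\left[f_{c}\right]$ is a bijection from $\mathbb{R}$ onto $\textrm{Ext}\left(\mathbb{R}^{2},\mathbb{R}\right)$.

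Consequently, the class of the given extension $\left(U,i,\pi\right)$ equals $\left[f_{c}\right]$ for a unique $c\in\mathbb{R}$, and since the class is nonzero we have $c\neq 0$. Invoking the correspondence of Fact~\ref{ExtensionsAndCohomology} (equivalently Fact~\ref{P:Equivconlaextindxf}), extensions with the same class are definably equivalent, so $\left(U,i,\pi\right)$ is definably equivalent as an extension to $\left(E_{c},i,\pi\right)$; in particular $U$ is definably isomorphic to $E_{c}$ as a group.

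It therefore remains to show that $E_{c}$ is definably isomorphic to $\mathcal{G}_{\textrm{Heis}}$. Recalling that $\mathcal{G}_{\textrm{Heis}}$ is exactly $E_{1/2}$ under the identification of $\mathbb{R}^{3}$ with $\mathbb{R}^{2}\times\mathbb{R}$, I would exhibit the definable bijection $E_{c}\to\mathcal{G}_{\textrm{Heis}}$ sending $\left(\left(x,y\right),z\right)\mapsto\left(\left(x,y\right),\tfrac{z}{2c}\right)$, i.e.\ rescaling only the central coordinate. A one-line check (comparing the central term $\tfrac{1}{2c}\cdot c\det=\tfrac12\det$ on each side) confirms this is a group isomorphism. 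The one conceptual point to flag is that this does \emph{not} contradict Proposition~\ref{P:EcEd}: that result shows the $E_{c}$ are pairwise inequivalent \emph{as extensions}, i.e.\ via maps fixing the kernel copy of $\mathbb{R}$, whereas the isomorphism above scales that kernel and so is not an equivalence of extensions. This is the only subtlety; the rest is bookkeeping. Chaining $U\cong E_{c}\cong\mathcal{G}_{\textrm{Heis}}$ completes the proof.
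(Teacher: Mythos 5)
Your proposal is correct and takes essentially the same route as the paper: both rest on the one-dimensionality of $H^{2}\left(\mathbb{R}^{2},\mathbb{R}\right)$ (Theorem~\ref{T:2.3}) to reduce an arbitrary nonzero class to some $E_{c}$ with $c\neq 0$, and then exhibit the central-rescaling isomorphism with $\mathcal{G}_{\textrm{Heis}}$ --- your map $\left(\left(x,y\right),z\right)\mapsto\left(\left(x,y\right),\tfrac{z}{2c}\right)$ is exactly the inverse of the paper's $\sigma$. If anything, your explicit linearity argument $\left[f_{c}\right]=c\left[f_{1}\right]$ is slightly tighter than the paper's corresponding step, which infers that the pairwise inequivalent family $\{E_{c}\}$ exhausts all classes without spelling out why injectivity of $c\mapsto\left[E_{c}\right]$ yields surjectivity.
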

\begin{proof}
As we observed earlier, $H^{2}\left(\mathbb{R}^{2},\mathbb{R}\right)\simeq(\mathbb{R},+)$ carries the structure of an $\mathbb R\/$-vector space.
Since the groups $E_c$ above form a family of nonisomorphic extensions indexed by $c$, it follows that all the possible extensions are equivalent to $E_c$ for some $c\in \mathbb R$.

To prove the proposition, we thus need only show that every $E_c$ is (definably) isomorphic to either
$\mathcal{G}_{\textrm{Heis}}$ or to the abelian group $\mathbb R\times \mathbb R\times \mathbb R$.
Since $E_0$ is isomorphic to $\mathbb R\times \mathbb R\times \mathbb R$, we now fix $c\not=0$.
Let $\sigma:\mathcal{G}_{\textrm{Heis}} \rightarrow E_c$ be given by
\[\left(x,y,z\right) \mapsto
\left(\left(x,y\right),2cz\right).\]
It is evident that $\sigma$ is an isomorphism, completing the proof.
%
\end{proof}

Combining the results above we have

\begin{theorem}\label{T:2.5}
Let $\left(\mathbb{R},+\right)$ be a definable
$\left(\mathbb{R}^{2},+\right)$-module with the trivial action on $\mathcal R$. Then
the definable extensions of $\mathbb{R}^{2}$ by $\mathbb{R}$ are (definably)
isomorphic to either $\mathbb{R}^{3}$ or $\mathcal{G}_{\textrm{Heis}}$.

In particular, let $G$ be an ordered decomposable group of
dimension~3 and $N$ be its (unique) convex o-minimal normal subgroup
such that $H:=G/N$ is isomorphic to $\mathbb R\times \mathbb R$ and
the natural action of $H$ on $N$ is the trivial action. Then $G$ is
(definably) isomorphic (as a group) to either $\mathbb R^3$ or
$\mathcal{G}_{\textrm{Heis}}$, both ordered lexicographically
according to the decomposition above.
\end{theorem}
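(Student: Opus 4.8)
The plan is to assemble results already in hand rather than to prove anything new from scratch: both assertions are a synthesis of Fact~\ref{ExtensionsAndCohomology}, Theorem~\ref{T:2.3}, and Proposition~\ref{P:2.4}, combined with the ordered-group machinery of Section~\ref{section3}.

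For the first assertion, I would start from Fact~\ref{ExtensionsAndCohomology}: with the action $\gamma$ fixed to be trivial, the definable extensions of $\mathbb{R}^{2}$ by $\mathbb{R}$, taken \emph{up to equivalence of extensions}, are in bijection with $H^{2}(\mathbb{R}^{2},\mathbb{R},\gamma)$. By Theorem~\ref{T:2.3} this group is isomorphic to $(\mathbb{R},+)$, so up to equivalence the extensions form a one-parameter family. The zero class is represented by the split (direct product) extension, whose total group is $\mathbb{R}^{3}$; every nonzero class, by Proposition~\ref{P:2.4}, has total group definably isomorphic to $\mathcal{G}_{\textrm{Heis}}$. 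Hence, \emph{as a group}, every definable extension of $\mathbb{R}^{2}$ by $\mathbb{R}$ with trivial action is definably isomorphic to $\mathbb{R}^{3}$ or to $\mathcal{G}_{\textrm{Heis}}$, which is the first claim.

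For the ``in particular'' statement, I would take $G$, $N$, and $H=G/N\cong\mathbb{R}^{2}$ as in the hypothesis. By Theorem~\ref{o-minimal subgroup}, via Corollary~\ref{exactness}, $G$ sits in a definable exact sequence $1\to N\to G\to H\to 1$ of ordered groups; since $N$ is a one-dimensional o-minimal group it is definably isomorphic to $(\mathbb{R},+)$ by assumption~$(*)$ (Corollary~\ref{solvable groups}), and the conjugation action of $H$ on $N$ is trivial by hypothesis. Thus $G$ is precisely a definable extension of $\mathbb{R}^{2}$ by $\mathbb{R}$ with trivial action, and the first assertion yields the group isomorphism with $\mathbb{R}^{3}$ or $\mathcal{G}_{\textrm{Heis}}$. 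For the ordering, I would note that the trivial action is order preserving, so Proposition~\ref{order preserving action} together with Theorem~\ref{2-Cohomology} shows that the order on $G$ is exactly the lexicographic order determined by the convex subgroup $N$ and the (lexicographically ordered) quotient $H$ of Theorem~\ref{T:1.3}.

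I expect the only delicate point to be bookkeeping rather than mathematics: one must keep clear the distinction between equivalence of extensions and isomorphism of the underlying groups. Proposition~\ref{P:EcEd} shows that the $E_{c}$ are pairwise inequivalent as extensions, so the equivalence classes really do form a continuum matching $H^{2}\simeq\mathbb{R}$, yet Proposition~\ref{P:2.4} collapses all nonzero classes to a single group up to isomorphism. Since the theorem asserts only the coarser group-isomorphism statement, I would phrase the conclusion accordingly and defer the finer question of ordered-group isomorphism to Section~\ref{sectionOrdered}.
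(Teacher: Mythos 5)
Your proposal is correct and takes essentially the same route as the paper: the group-theoretic part is exactly the synthesis of Fact~\ref{ExtensionsAndCohomology}, Theorem~\ref{T:2.3} and Proposition~\ref{P:2.4} (including the equivalence-versus-isomorphism bookkeeping via Proposition~\ref{P:EcEd}), and the ordered part rests on convexity of $N$ forcing the lexicographic order, with the finer positive/negative distinction deferred to Section~\ref{sectionOrdered}. The paper's own proof spends its text on the one point you compress into the citation of Theorem~\ref{T:1.3}: it checks that any definable order on $\mathcal{G}_{\textrm{Heis}}$ (resp.\ $E_c$) forces the center (resp.\ $N_c$) to be the convex o-minimal normal subgroup and that all orderings of the quotient yield definably isomorphic ordered groups, so that the isomorphism $\sigma$ of Proposition~\ref{P:2.4} applies to the lexicographically ordered representatives.
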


\begin{proof}
All that requires proof is the assertion that all $E_c$ with $c\neq 0$ as definable ordered groups
are isomorphic; indeed, if the underlying group is $(\mathbb R^3, +)$, the definable ordered group isomorphism is trivially given by the isomorphisms of the convex subgroups.

Any definable order that makes $\mathcal{G}_{\textrm{Heis}}$ into an ordered group must have the subgroup consisting of all matrices
\[
\left[\begin{array}{ccc}

1 & 0 & z\\
0 & 1 & 0\\
0 & 0 &
1\end{array}\right]\]
with $z\in \mathbb R$ as its o-minimal convex subgroup $N$, and the quotient group $\mathcal{G}_{\textrm{Heis}}/N\equiv (R^2,+)$ then can be ordered in any way---all orders give
rise to definable isomorphic ordered groups. Thus, after applying a definable ordered group isomorphism, we may assume that
the ordered group $\mathcal{G}_{\textrm{Heis}}$ is given by all real matrices
\[
\left[\begin{array}{ccc}
1 & x & z\\
0 & 1 & y\\
0 & 0 &
1\end{array}\right]\]
ordered lexicographically with the variables in the order $x, y, z$.

Similarly,
every ordered group
\[E_c:=\{ ((g_1, g_2), m_1)\mid \ m_1,g_1, g_2\in \mathbb R\}
\]
that comes from an extension 
must have $N_c:=\{ ((0,0), m_1)\mid m_1\in \mathbb R\}$ as its convex
o-minimal normal subgroup. Moreover, all orderings of $E_c/N_c$ give
rise to definably isomorphic ordered groups. We thus can assume that
the order on $E_c$ is given by the lexicographic order given by the
variables $g_1, g_2, m_1$ in that order. It now is evident that the
map $\sigma$ defined in the proof of Proposition~\ref{P:2.4} is an
isomorphism of definable groups. As we see in Section~\ref{8.3}, the definable {\em ordered\/} group isomorphism type
varies according to whether $c$ is positive or negative.
\end{proof}

\medskip
\subsection{$H:=\mathbb{R}\times\mathbb{R}$ and the action of
$H$ on $\mathbb{R}$ is nontrivial}\label{7.2}\hspace*{\fill} \\



We begin with an observation. Suppose that $G$ and $G'$ are ordered groups with o-minimal convex normal subgroup $N$, respectively $N'$, such that the quotient groups $H:=G/N$ and $H':=G'/N'$ are isomorphic to $\mathbb R^2$ and the actions
$\gamma\colon H\to \aut (N)$ and $\gamma'\colon H'\to N'$ are nontrivial. Fact \ref{P:accionesequi} implies that there is an isomorphism of modules between
$(H, N, \gamma)$ and $(H', N', \gamma')$. Since any two orders
on $H$ (and $H'$) give rise to isomorphic ordered groups, we can assume that the isomorphism respects the order between the triples.

We now prove

\begin{theorem}\label{T:2.6}
Let $\left(\left(\mathbb{R},+\right),\gamma\right)$ be a definable $\left(\mathbb{R}^{2},+\right)$-module with nontrivial action
$\gamma$. Then
$H^{2}\left(\mathbb{R}^{2},\mathbb{R},\gamma\right)=\left\{
0\right\} $.
\end{theorem}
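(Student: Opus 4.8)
The plan is to compute $H^2(\mathbb{R}^2,\mathbb{R},\gamma)$ by feeding a well-chosen normal subgroup $K\trianglelefteq\mathbb{R}^2$ into the Hochschild--Serre filtration of Fact~\ref{C:HyS}, in the packaged form of Corollary~\ref{all you need}. The entire argument reduces to checking that all three graded pieces attached to $K$ vanish, and the only real decision is to choose $K$ to be a one-dimensional subgroup on which $\gamma$ acts \emph{nontrivially}, so that the module of $K$-invariants collapses to $\{0\}$ and the base term disappears.

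First I would normalize the action. By Fact~\ref{P:accionesgrales} we may write $\gamma(x_1,x_2)=e^{c_1x_1+c_2x_2}$ with $(c_1,c_2)\neq(0,0)$, and by Fact~\ref{P:accionesequi} this $\gamma$ is definably equivalent to the action $\gamma_0(x_1,x_2)=e^{x_1}$. Definable equivalence means $\gamma=\gamma_0\circ\psi$ for a definable automorphism $\psi$ of $\mathbb{R}^2$, so $(\psi,\mathrm{id}_{\mathbb{R}})$ is an isomorphism between the module triples $(\mathbb{R}^2,\mathbb{R},\gamma)$ and $(\mathbb{R}^2,\mathbb{R},\gamma_0)$; since group cohomology is functorial under such isomorphisms, $H^2(\mathbb{R}^2,\mathbb{R},\gamma)\cong H^2(\mathbb{R}^2,\mathbb{R},\gamma_0)$. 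Hence I may assume $\gamma(x_1,x_2)=e^{x_1}$.

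Next I would take $K:=\mathbb{R}\times\{0\}\trianglelefteq\mathbb{R}^2$, on which $\gamma$ restricts to the nontrivial action $x_1\mapsto e^{x_1}$, and identify $G/K$ with the second-coordinate copy of $\mathbb{R}$. Because $K$ acts nontrivially, the fixed module is $\mathbb{R}^K=\{0\}$, so the base term satisfies $H^2(G/K,\mathbb{R}^K)=H^2(\mathbb{R},\{0\})=\{0\}$. For the middle term, Lemma~\ref{smaller dimensional groups}(3) applied to the nontrivial $K$-action gives $H^1(K,\mathbb{R})=\{0\}$, whence $H^1(G/K,H^1(K,\mathbb{R}))=\{0\}$. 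For the top term, Lemma~\ref{smaller dimensional groups}(4) gives $H^2(K,\mathbb{R})=H^2(\mathbb{R},\mathbb{R})=\{0\}$, so $\left(H^2(K,\mathbb{R})\right)^{G/K}=\{0\}$ as well. With all three pieces trivial, Corollary~\ref{all you need}(1) yields $H^2(\mathbb{R}^2,\mathbb{R},\gamma)=\{0\}$, as desired.

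There is no serious obstacle once the machinery of Section~\ref{spectral} is in place; the argument is essentially bookkeeping through the filtration. The one point that genuinely matters is the choice of $K$: selecting the subgroup on which the action is nontrivial makes $\mathbb{R}^K=\{0\}$ and thereby kills $H^2(G/K,\mathbb{R}^K)$ outright, whereas the complementary (trivially acting) subgroup would force one to track a possibly nontrivial $G/K$-action on $H^1(K,\mathbb{R})\cong\mathbb{R}$ and complicate the middle term. It is also worth emphasizing that the vanishing of the top term rests on Lemma~\ref{smaller dimensional groups}(4), which is precisely where the standing hypothesis~$(**)$ enters the computation.
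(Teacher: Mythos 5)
Your proof is correct, and it runs on the same machinery as the paper---normalize the action to $\gamma_0(x_1,x_2)=e^{x_1}$ via Facts~\ref{P:accionesgrales} and~\ref{P:accionesequi}, then apply Corollary~\ref{all you need} to a one-dimensional normal subgroup $K$---but with the opposite choice of $K$, and this changes the computation in a substantive way. The paper takes $K=\left\{(0,y)\mid y\in\mathbb{R}\right\}$, the kernel of the action, so that $K$ acts \emph{trivially}: then $\mathbb{R}^K=\mathbb{R}$ and $H^1(K,\mathbb{R})\simeq\mathbb{R}$, and the paper must (i) verify by a direct calculation that the induced action of $\mathbb{R}^2/K$ on $H^1(K,\mathbb{R})$ is nontrivial, so that Lemma~\ref{smaller dimensional groups}(3) kills the middle term, and (ii) invoke Lemma~\ref{smaller dimensional groups}(4) for the \emph{trivial} action on $K$---the abelian case, which is exactly where hypothesis~$(**)$ is needed---to kill the top term. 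Your choice $K=\mathbb{R}\times\{0\}$, on which the action restricts nontrivially, is cleaner: $\mathbb{R}^K=\{0\}$ and $H^1(K,\mathbb{R})=\{0\}$ wipe out the bottom and middle terms with no computation at all, leaving only $H^2(K,\mathbb{R})$ for a nontrivial action. Your prediction that the trivially-acting subgroup ``would force one to track a possibly nontrivial $G/K$-action on $H^1(K,\mathbb{R})$'' is precisely the extra work the paper's proof does. One small correction to your closing remark, though: the instance of Lemma~\ref{smaller dimensional groups}(4) that your argument uses is the nontrivial-action case, where every extension is automatically nonabelian and splitting is known unconditionally (as noted in Section~\ref{dimension2}); so your proof does not actually depend on~$(**)$ at all. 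That hypothesis is needed only for the trivial-action (abelian) instance of Lemma~\ref{smaller dimensional groups}(4), which is the one the paper's choice of $K$ forces it to use.
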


\begin{proof}
By Fact \ref{P:accionesequi}, the action
$\varphi\left(x,y\right)=e^{x}$ is definably equivalent to every nontrivial action $\gamma$ of
$\mathbb{R}^{2}$  on $\mathbb{R}$, hence
$H^{2}\left(\mathbb{R}^{2},\mathbb{R},\gamma\right)\simeq
H^{2}\left(\mathbb{R}^{2},\mathbb{R},\varphi\right)$. It thus suffices to calculate
$H^{2}\left(\mathbb{R}^{2},\mathbb{R},\varphi\right)$.

Let $K=\left\{ \left(0,y\right)\vert y\in\mathbb{R}\right\}
\trianglelefteq\left(\mathbb{R}^{2},+\right)$.
Observe that $\phi$ makes $(\mathbb R,+)$ into a $K\/$-module under the trivial action. An
application of Lemma~\ref{smaller dimensional groups}(4) yields that $H^2(K,\mathbb{R})=\{0\}$, and hence
$(H^2(K,\mathbb{R}))^{\mathbb{R}^2/K}=\{0\}$. By direct calculation, one can see that
$H^i(K,\mathbb{R})$ is an $\mathbb{R}^2/K$ module under a nontrivial action and
Lemma~\ref{smaller dimensional groups}(2) implies that
$H^1(K,\mathbb{R})\simeq\mathbb{R}$. Since $\mathbb{R}^2/K$ is isomorphic to $\mathbb{R}$, we have
by Lemma~\ref{smaller dimensional groups}(3) that
$H^1(\mathbb{R}^2/K, H^1(K,\mathbb{R}))=\{0\}$.
As $K$ acts trivially on $\mathbb{R}$, we have that $\mathbb{R}^K=\mathbb{R}$ and thus
$H^2(\mathbb{R}^2/K, \mathbb{R}^K)=\{0\}$, again by Lemma~\ref{smaller dimensional groups}(4).
Putting together all of the above, an application of Corollary\ref{all you need}(1) gives
$H^2(\mathbb{R}^2, \mathbb{R}, \phi)=\{0\}$.
\end{proof}


The following now is an immediate consequence of Theorem~\ref{T:2.6} and the discussion preceding it.

\begin{theorem}\label{T:2.7}
Let $\left(\left(\mathbb{R},+\right),\gamma\right)$ and
$\left(\left(\mathbb{R},+\right),\gamma^{\prime}\right)$ be two
definable $\left(\mathbb{R}^{2},+\right)$-modules with nontrivial
actions $\gamma$ and $\gamma^{\prime}$, respectively. Then any two definable groups
$G$ and $G'$ that are extensions of $\mathbb R^2$ by $\mathbb R$
under $\gamma$, respectively $\gamma'$, are definably isomorphic
as groups.
\end{theorem}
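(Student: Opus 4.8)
The plan is to exploit the cohomology vanishing of Theorem~\ref{T:2.6} together with the normal-form for actions in Fact~\ref{P:accionesequi}, so that the two extensions become semidirect products that are then matched by an explicit map. First I would apply Theorem~\ref{T:2.6} to both module structures, obtaining $H^{2}(\mathbb{R}^{2},\mathbb{R},\gamma)=\{0\}$ and $H^{2}(\mathbb{R}^{2},\mathbb{R},\gamma')=\{0\}$. By Fact~\ref{ExtensionsAndCohomology} the equivalence classes of extensions via a fixed action are in one-to-one correspondence with the corresponding $H^{2}$; since each group is trivial, for each action there is, up to equivalence, a unique extension, namely the split one. Hence $G$ is definably isomorphic to $\mathbb{R}\rtimes_{\gamma}\mathbb{R}^{2}$ and $G'$ to $\mathbb{R}\rtimes_{\gamma'}\mathbb{R}^{2}$, where the semidirect product carries the operation $(a,g)\cdot(b,h)=(a+\gamma(g)(b),\,g+h)$ read off from Fact~\ref{ExtensionsAndCohomology} with trivial cocycle.

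It then remains to show that these two split extensions are themselves definably isomorphic as groups. Here I would invoke Fact~\ref{P:accionesequi}: since $\gamma$ and $\gamma'$ are both nontrivial, they are definably equivalent, so there is a definable group automorphism $\psi$ of $\mathbb{R}^{2}$ with $\gamma=\gamma'\circ\psi$. I would then verify that $\Phi(a,g):=(a,\psi(g))$ is a group isomorphism $\mathbb{R}\rtimes_{\gamma}\mathbb{R}^{2}\to\mathbb{R}\rtimes_{\gamma'}\mathbb{R}^{2}$: expanding, $\Phi((a,g)\cdot(b,h))=(a+\gamma(g)(b),\,\psi(g)+\psi(h))$, while $\Phi(a,g)\cdot\Phi(b,h)=(a+\gamma'(\psi(g))(b),\,\psi(g)+\psi(h))$, and the two agree precisely because $\gamma(g)=\gamma'(\psi(g))$ and $\psi$ is additive. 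Composing the chain $G\cong\mathbb{R}\rtimes_{\gamma}\mathbb{R}^{2}\cong\mathbb{R}\rtimes_{\gamma'}\mathbb{R}^{2}\cong G'$ of definable isomorphisms yields the theorem.

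I do not expect a genuine obstacle, as the substantive content has already been discharged in the spectral-sequence computation of Theorem~\ref{T:2.6} and the classification of nontrivial actions in Fact~\ref{P:accionesequi}; the remaining work is bookkeeping. The one point that warrants care is the distinction between \emph{equivalence} of extensions---the relation classified by $H^{2}$, which fixes $N$ and $H$ pointwise---and mere group isomorphism. Triviality of $H^{2}$ delivers the former only for a \emph{fixed} module structure, so to pass between the two different actions $\gamma$ and $\gamma'$ I must route through the explicit map $\Phi$ built from $\psi$ rather than attempt to compare cohomology classes attached to different actions directly. Alternatively, as the discussion preceding the statement already observes, Fact~\ref{P:accionesequi} provides a module isomorphism between the triples $(\mathbb{R}^{2},\mathbb{R},\gamma)$ and $(\mathbb{R}^{2},\mathbb{R},\gamma')$; one may first use it to identify the two module structures and then apply the vanishing of $H^{2}$ a single time. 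I would present whichever of these two equivalent routes reads more cleanly.
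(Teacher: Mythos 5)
Your proposal is correct and follows essentially the same route as the paper, which states Theorem~\ref{T:2.7} as an immediate consequence of Theorem~\ref{T:2.6} (vanishing of $H^{2}$, so each extension splits) combined with the preceding observation that Fact~\ref{P:accionesequi} gives a module isomorphism between the triples. Your explicit map $\Phi(a,g)=(a,\psi(g))$ and the careful remark distinguishing extension equivalence from group isomorphism simply spell out details the paper leaves implicit.
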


In particular, all decomposable three-dimensional ordered groups
definable in $\mathcal R$ such that the quotient $H$ of $G$ by the
unique convex o-minimal subgroup is isomorphic to $\mathbb R\times
\mathbb R$ and the action is nontrivial, are definably
isomorphic (as groups) to $\mathbb{R}\rtimes_{\gamma}\mathbb{R}^{2}$
where $\gamma\left(x,y\right)=e^{x}$ (this is an order preserving
action). In view of this, we shall abuse notation and denote by
$\left(\mathbb{R}\rtimes\mathbb{R}\right)\times\mathbb{R}$ the group
given by the action $\gamma\left(x\right)=e^{x}$.



\medskip
\subsection{$H:=\mathbb{R}\rtimes\mathbb{R}$ and the action of
$H$ on $\mathbb{R}$ is trivial}\label{7.3}\hspace*{\fill} \\

%

We here assume that we have a group that is a
definable extension of $\mathbb{R}\rtimes\mathbb{R}$ by $\mathbb{R}$
with the trivial action.

As in the preceding cases, we first must compute the relevant cohomology groups.

\begin{lemma}\label{L:SDNT}
Let $\left(\mathbb{R},+\right)$ be a definable $\left(\mathbb{R}\rtimes\mathbb{R},\cdot \right)$-module with the trivial action then
$H^{1}\left(\mathbb{R}\rtimes\mathbb{R},\mathbb{R}\right)\simeq\left(\mathbb{R},+\right)$
and $H^{2}\left(\mathbb{R}\rtimes\mathbb{R},\mathbb{R}\right)=\left\{
0\right\}$.
\end{lemma}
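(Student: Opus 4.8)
The plan is to run the Hochschild--Serre spectral sequence of Fact~\ref{C:HyS} with respect to the normal subgroup $K:=\mathbb{R}\times\{0\}$ of $G:=\mathbb{R}\rtimes\mathbb{R}$ that is being acted upon, so that $G/K\simeq\mathbb{R}$. Since the $G$-action on the coefficient module $\mathbb{R}$ is trivial, both the induced $K$-action and $G/K$-action on $\mathbb{R}$ are trivial, and in particular $\mathbb{R}^K=\mathbb{R}$. The whole argument then reduces to reading off the associated graded terms in Fact~\ref{C:HyS} and Corollary~\ref{all you need} from the one-dimensional computations in Lemma~\ref{smaller dimensional groups}.

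The crux is to pin down the $G/K$-action on $H^1(K,\mathbb{R})$. By Lemma~\ref{smaller dimensional groups}(2) we have $H^1(K,\mathbb{R})=H^1(\mathbb{R},\mathbb{R})\simeq\mathbb{R}$, a class being represented by the definable homomorphism $x\mapsto cx$ for a scalar $c\in\mathbb{R}$. The $G/K$-action is the usual conjugation action: for $\bar g$ the image of $g=(0,y)$ and $f\colon K\to\mathbb{R}$ a homomorphism, $(\bar g\cdot f)(k)=f(g^{-1}kg)$, the outer action on $\mathbb{R}$ being trivial. A short computation in $\mathbb{R}\rtimes\mathbb{R}$ gives $(0,y)^{-1}(x,0)(0,y)=(e^{-y}x,0)$, so on the scalar parameter the action of $(0,y)$ is multiplication by $e^{-y}$. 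Hence $G/K\simeq\mathbb{R}$ acts \emph{nontrivially} on $H^1(K,\mathbb{R})\simeq\mathbb{R}$; in particular $(H^1(K,\mathbb{R}))^{G/K}=\{0\}$, and by Lemma~\ref{smaller dimensional groups}(3) applied to this nontrivial action, $H^1(G/K,H^1(K,\mathbb{R}))=\{0\}$.

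With this in hand both cohomology groups fall out. For $H^2$ I would verify that the three associated graded terms in Corollary~\ref{all you need} all vanish: $H^2(G/K,\mathbb{R}^K)=H^2(\mathbb{R},\mathbb{R})=\{0\}$ and $(H^2(K,\mathbb{R}))^{G/K}=\{0\}$, both by Lemma~\ref{smaller dimensional groups}(4), while $H^1(G/K,H^1(K,\mathbb{R}))=\{0\}$ by the previous paragraph; Corollary~\ref{all you need}(1) then yields $H^2(\mathbb{R}\rtimes\mathbb{R},\mathbb{R})=\{0\}$. For $H^1$ I would use the $H^1$-filtration of Fact~\ref{C:HyS}: its bottom piece is $F^1H^1=H^1(G/K,\mathbb{R}^K)=H^1(\mathbb{R},\mathbb{R})\simeq\mathbb{R}$ by Lemma~\ref{smaller dimensional groups}(2), while the quotient $H^1/F^1H^1$ embeds into $(H^1(K,\mathbb{R}))^{G/K}=\{0\}$. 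Hence $H^1(\mathbb{R}\rtimes\mathbb{R},\mathbb{R})=F^1H^1\simeq\mathbb{R}$.

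The main obstacle is exactly the computation flagged above. It is tempting to expect the $G/K$-action on $H^1(K,\mathbb{R})$ to be trivial since the coefficient module carries the trivial action; but the semidirect-product structure forces conjugation to rescale $K$ by $e^{\pm y}$, and it is precisely the \emph{nontriviality} of the resulting action that kills both $H^1(G/K,H^1(K,\mathbb{R}))$ and the invariants $(H^1(K,\mathbb{R}))^{G/K}$. Without this, $H^2$ would not vanish and $H^1$ would come out too large. The only point requiring care is the normalization of the conjugation computation, but any nonzero exponent suffices for the argument.
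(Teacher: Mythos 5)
Your proof is correct and follows essentially the same route as the paper: the same normal subgroup $K=\left\{(x,0)\mid x\in\mathbb{R}\right\}$, the same appeal to Fact~\ref{C:HyS}, Lemma~\ref{smaller dimensional groups}, and Corollary~\ref{all you need}, with the key point being that $G/K$ acts nontrivially on $H^{1}\left(K,\mathbb{R}\right)$. In fact you supply the conjugation computation $(0,y)^{-1}(x,0)(0,y)=\left(e^{-y}x,0\right)$ that the paper's proof explicitly leaves to the reader, and it is carried out correctly.
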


\begin{proof}
Let $K:=\left\{ \left(x,0\right)\vert x\in\mathbb{R}\right\}
\trianglelefteq\mathbb{R}\rtimes\mathbb{R}$. Routine calculations (see \cite{BarrigaThesis})
show that
$H^{0}\left(K,\mathbb{R}\right)$ is an $\mathbb{R}\rtimes\mathbb{R}/K$-module with
the trivial action and that $\mathbb{R}\rtimes\mathbb{R}/K$ acts nontrivially on
$H^{1}\left(K,\mathbb{R}\right)$.
The lemma then follows by applying Fact~\ref{C:HyS}, Lemma~\ref{smaller dimensional groups}, and
Corollary~\ref{all you need}. Details are omitted and left to the interested reader.
\end{proof}

With Lemma~\ref{L:SDNT} in hand, we have

\begin{theorem}\label{T:SDNT}
Let $\left(\mathbb{R},+\right)$ be a definable
$\left(\mathbb{R}\rtimes\mathbb{R},\cdot \right)$-module with trivial
action. Then the cartesian product
$\left(\mathbb{R}\times\left(\mathbb{R}\rtimes\mathbb{R}\right)\right)$ is the
only group which is an extension of
$\left(\mathbb{R}\rtimes\mathbb{R},\cdot\right)$ by
$\left(\mathbb{R},+\right)$.

Modulo isomorphism, all such ordered groups are
definably isomorphic to the ordered group $\left(\mathbb{R}\times\left(\mathbb{R}\rtimes\mathbb{R}\right)\right)$ with the lexicographic order
given first by the last coordinate, second by the second coordinate and last by the first coordinate.
\end{theorem}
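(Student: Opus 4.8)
The first assertion is purely group-theoretic and drops out of the cohomological input just assembled. By Lemma~\ref{L:SDNT} we have $H^{2}(\mathbb{R}\rtimes\mathbb{R},\mathbb{R})=\{0\}$ for the trivial action, so by Fact~\ref{ExtensionsAndCohomology} there is a single equivalence class of extensions of $\mathbb{R}\rtimes\mathbb{R}$ by $\mathbb{R}$ via this action, namely the split one. Since the action is trivial, the split extension is the direct product $\mathbb{R}\times(\mathbb{R}\rtimes\mathbb{R})$, which is exactly the first statement.

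For the ordered assertion the plan is to reduce everything to the orderings of the convex subgroup $N$ and of the quotient $G/N$, both of which are already understood, and then to recombine them. The first key point is that triviality of the action of $H=G/N$ on $N$ means that conjugation fixes $N$ pointwise, so $N$ is central in $G$. Transporting along the group isomorphism $G\cong\mathbb{R}\times(\mathbb{R}\rtimes\mathbb{R})$ from the first part, and observing that the centre of $\mathbb{R}\times(\mathbb{R}\rtimes\mathbb{R})$ is precisely the one-dimensional direct factor $\mathbb{R}$ (the semidirect factor being centreless under its nontrivial action, by a direct commutator computation), we conclude that $N$ must be this direct factor. Hence $G/N\cong\mathbb{R}\rtimes\mathbb{R}$, and by Theorem~\ref{T:1.3} together with Proposition~\ref{order preserving action} and Theorem~\ref{2-Cohomology} the inherited ordering makes $G/N$ the two-dimensional nonabelian ordered group, unique up to definable ordered isomorphism, lexicographically ordered first by its acting coordinate and then by its normal convex coordinate.

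Next I would upgrade the group-theoretic splitting to an ordered decomposition. Since the extension splits, I choose a complement $S\leq G$ with $G=N\times S$; because $N$ is convex in $G$ (Theorem~\ref{o-minimal subgroup}), distinct elements of $S$ lie in distinct, hence $<$-separated, $N$-cosets, so the quotient map restricts to an order isomorphism $S\to G/N$. Convexity of $N$ likewise forces the order on $G$ to be exactly the lexicographic order determined first by the $G/N$-component and then by the $N$-component. Since $N\cong(\mathbb{R},+)$ is by $(*)$ the unique one-dimensional o-minimal ordered group—its two orientations being interchanged by the negation automorphism, so that $N\cong N'$ as ordered groups for any two instances—and since $G/N$ has a unique ordered type by Theorem~\ref{T:1.3}, all such $G$ become definably ordered-isomorphic to $\mathbb{R}\times(\mathbb{R}\rtimes\mathbb{R})$ carrying the lexicographic order that is most significant in the acting coordinate of the semidirect factor, next in its normal coordinate, and least significant in the central factor $N$. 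Writing the coordinates so that $N$ appears first, this is exactly the order described in the statement.

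The vanishing of the cohomology is the routine ingredient; the point that needs care is the order bookkeeping. Concretely, the main thing to get right is that centrality of $N$ pins it down as the direct factor and that convexity of $N$ makes any group-theoretic complement automatically an \emph{ordered} complement, so that the two ordered building blocks recombine into the asserted lexicographic product. Granting Theorem~\ref{T:1.3}, no further ordered-group computation is required beyond correctly matching up the three convex layers.
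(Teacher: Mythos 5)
Your proposal is correct and takes essentially the same route as the paper: the group-theoretic assertion follows from $H^{2}\left(\mathbb{R}\rtimes\mathbb{R},\mathbb{R}\right)=\left\{0\right\}$ (Lemma~\ref{L:SDNT}) together with Fact~\ref{ExtensionsAndCohomology}, and the ordered assertion is reduced to the lexicographic decomposition of $G$ along its convex o-minimal normal subgroup $N$ together with the ordered uniqueness of $N$ and of $G/N$ established earlier. Your additional details---that triviality of the action makes $N$ central and hence equal to the direct factor, and that a group-theoretic complement of the convex subgroup $N$ is automatically an \emph{ordered} complement---only make explicit what the paper dismisses as ``evident''; be aware, though, that both you and the paper lean on the ordered-uniqueness claim for $\mathbb{R}\rtimes\mathbb{R}$ coming from the proof of Theorem~\ref{T:1.3}, a claim the paper itself later refines in Sections~\ref{ordered dimension 2} and~\ref{8.4}, where the expanding and contracting orderings are shown \emph{not} to be isomorphic as ordered groups.
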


\begin{proof}
Only the final claim about the ordered groups needs to be checked. We know that any definable ordered group $G$ with o-minimal convex normal subgroup $N$ and $H=G/N$ is ordered lexicographically first by the order in $H$ and lastly by the order in $N$. As we already have proved that any ordered groups isomorphic as groups to $N\simeq (\mathbb{R},+)$ or
$H\simeq (\mathbb R\rtimes \mathbb R, \cdot)$ are isomorphic as definable ordered groups, the assertion is evident.
\end{proof}

\medskip
\subsection{$H:=\mathbb{R}\rtimes\mathbb{R}$ and the action of
$H$ on $\mathbb{R}$ is nontrivial}\label{7.4}\hspace*{\fill} \\

%

The analysis in this case is far more delicate than in the preceding ones.
We first compute the isomorphism classes given by definable actions
of modules.

\begin{lemma}\label{L:3.4.4.1}
Let $\left(\mathbb{R},+\right)$ be a definable
$\mathbb{R}\rtimes\mathbb{R}$-module with nontrivial action
$\gamma$. Then
$\gamma\left(\left(x,y\right)\right)\left(a\right)=e^{cy}a$, for
some $c\not=0$.
\end{lemma}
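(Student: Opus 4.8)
The plan is to exploit the fact that the codomain of a definable action is abelian, which forces $\gamma$ to factor through the abelianization of $H=\mathbb{R}\rtimes\mathbb{R}$. Recall from the discussion following $(*)$ in Section~\ref{dimension2} that for the o-minimal group $N=(\mathbb{R},+)$ the group $\aut(N)$ of definable automorphisms is isomorphic to $(R\setminus 0,\cdot)$, and in particular is abelian. Since $\gamma\colon H\to\aut(\mathbb{R})$ is a group homomorphism into an abelian group, it must kill the commutator subgroup $[H,H]$.

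First I would compute $[H,H]$. Using the product $(x_1,y_1)\cdot(x_2,y_2)=(x_1+e^{y_1}x_2,\,y_1+y_2)$, one checks that $(x,y)^{-1}=(-e^{-y}x,-y)$ and a direct calculation gives $[(0,y),(x,0)]=((e^y-1)x,\,0)$. Every commutator lies in $\mathbb{R}\times\{0\}$, since the second coordinate of a product is $y_1+y_2$ and hence cancels in any commutator; conversely, for fixed $y\neq 0$ the factor $e^y-1$ is nonzero, so letting $x$ range over $\mathbb{R}$ already yields all of $\mathbb{R}\times\{0\}$. Therefore $[H,H]=\mathbb{R}\times\{0\}$, equivalently $H^{\mathrm{ab}}\cong(\mathbb{R},+)$ via the second coordinate.

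Consequently $\gamma$ annihilates $\mathbb{R}\times\{0\}$. Writing $(x,y)=(x,0)\cdot(0,y)$ and using $\gamma((x,0))=\id$, I conclude $\gamma((x,y))=\gamma((0,y))$; that is, the action depends only on the second coordinate. The restriction $y\mapsto\gamma((0,y))$ is then a definable homomorphism from the subgroup $\{(0,y):y\in\mathbb{R}\}\cong(\mathbb{R},+)$ into $\aut(\mathbb{R})$, making $(\mathbb{R},+)$ into a definable $\mathbb{R}$-module whose action is nontrivial, since $\gamma$ itself is nontrivial.

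Finally I would invoke Fact~\ref{P:accionesgrales} with $n=1$ applied to this induced one-parameter module, which gives $\gamma((0,y))(a)=e^{cy}a$ for some $c\neq 0$; hence $\gamma((x,y))(a)=e^{cy}a$, as required. The only step demanding care is the commutator computation establishing $[H,H]=\mathbb{R}\times\{0\}$, which is the entire substance of the argument: once the action is known to be independent of the first coordinate, the conclusion is immediate from Fact~\ref{P:accionesgrales}. I do not anticipate any genuine obstacle beyond this routine calculation, and note that the positivity of $e^{cy}$ (hence that the action is automatically order-preserving) is consistent with the fact that a definable homomorphism out of the connected, divisible group $(\mathbb{R},+)$ lands in the identity component $(R^{>0},\cdot)$ of $\aut(\mathbb{R})$.
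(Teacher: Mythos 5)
Your proposal is correct. The commutator computation checks out: $[(0,y),(x,0)]=((e^{y}-1)x,0)$, the projection onto the second coordinate shows $[H,H]\subseteq\mathbb{R}\times\{0\}$, and fixing any $y\neq 0$ gives the reverse inclusion; since $\aut(\mathbb{R})\cong(\mathbb{R}\setminus\{0\},\cdot)$ is abelian, $\gamma$ factors through the second coordinate, and Fact~\ref{P:accionesgrales} with $n=1$ finishes. The paper takes a different, more computational route: it restricts $\gamma$ to the two coordinate subgroups, applies Fact~\ref{P:accionesgrales} to each restriction to write $\gamma((x,y))=e^{c_{1}x+c_{2}y}$, and then kills $c_{1}$ by a single element identity, comparing $\gamma\bigl((1,1)\cdot(1,1)\bigr)=\gamma\bigl((1+e,2)\bigr)=\gamma(1,0)^{1+e}\gamma(0,1)^{2}$ with $\bigl(\gamma(1,1)\bigr)^{2}=\gamma(1,0)^{2}\gamma(0,1)^{2}$, which forces $c_{1}(1+e)=2c_{1}$, hence $c_{1}=0$. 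Both arguments exploit the same tension between the noncommutativity of $\mathbb{R}\rtimes\mathbb{R}$ and the commutativity of $\aut(\mathbb{R})$, but yours does so structurally (the action must annihilate $[H,H]$) while the paper does so through one concrete identity. Your version buys two things: it generalizes immediately (any definable action on a module with abelian definable automorphism group factors through the abelianization), and it avoids a small imprecision in the paper, which invokes Fact~\ref{P:accionesgrales}---stated only for nontrivial actions---on restrictions $\gamma_{i}$ not yet known to be nontrivial, whereas you verify nontriviality of the induced one-parameter action before applying the fact. What the paper's computation buys is that it needs nothing beyond the product formula for $\mathbb{R}\rtimes\mathbb{R}$, with no identification of the full commutator subgroup.
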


\begin{proof}
Let $\left(\mathbb{R},\gamma_{1}\right)$ and $\left(\mathbb{R},\gamma_{2}\right)$ be the
definable $\left(\mathbb{R},+\right)$-modules where
$\gamma_{1}\left(x\right)\coloneqq\gamma\left(x,0\right)$ and
$\gamma_{2}\left(y\right)\coloneqq\gamma\left(0,y\right)$ for all $x, y\in\mathbb{R}$. By
Fact~\ref{P:accionesgrales} for $i=1,2$ we have that
$\gamma_{i}\left(x\right)=e^{{c_{i}}x}$ for some $c_{i} \in \mathbb{R}$ . Thus,

\[\gamma\left(\left(x,y\right)\right) =
\gamma\left(\left(x,0\right)\cdot \left(0,y\right)\right) =
\gamma_{1}\left(x\right)\gamma_{2}\left(y\right) =
e^{c_{1}x+c_{2}y} \] and
\[
\gamma\left(\left(1,1\right)\cdot \left(1,1\right)\right) =
\gamma\left(\left(1+e,2\right)\right) =
\left(\gamma\left(1,0\right)\right)^{1+e}\left(\gamma\left(0,1\right)\right)^{2}.\]
As
$(\gamma [(1,1)]^{2} =[\gamma (1,0)]^{2}[\gamma(0,1)]^{2}$ we thus have
$\gamma\left(1,0\right)=1$. Since $\gamma$ is nontrivial
$\gamma\left(0,1\right)\neq1$, hence
$\gamma\left(\left(x,y\right)\right)\left(a\right)=e^{cy}a$ for
some non-zero real constant $c$.
\end{proof}

The analysis now divides according to whether or not $c=1$

Applying Lemma~\ref{smaller dimensional groups} and
Corollary~\ref{all you need} yields

\begin{lemma}
Let $\left(\mathbb{R},+\right)$ be a definable
$\left(\mathbb{R}\rtimes\mathbb{R},\cdot\right)$-module with
nontrivial action $\gamma=e^{cy}$ with $c\neq 1$. Then
$H^{1}\left(\mathbb{R}\rtimes\mathbb{R},\mathbb{R},\gamma\right)=\left\{
0\right\}$ and
$H^{2}\left(\mathbb{R}\rtimes\mathbb{R},\mathbb{R},\gamma\right)=\left\{
0\right\}$.
\end{lemma}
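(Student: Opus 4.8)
The plan is to apply the Hochschild--Serre type filtration of Fact~\ref{C:HyS} with the normal subgroup
\[
K:=\{(x,0)\mid x\in\mathbb{R}\}\trianglelefteq \mathbb{R}\rtimes\mathbb{R},
\]
for which $(\mathbb{R}\rtimes\mathbb{R})/K\cong(\mathbb{R},+)$ via $(x,y)\mapsto y$. First I would record that, by Lemma~\ref{L:3.4.4.1}, the action is $\gamma((x,y))(a)=e^{cy}a$; restricting to $K$ gives $\gamma((x,0))=\id$, so $K$ acts trivially on $\mathbb{R}$ and $\mathbb{R}^{K}=\mathbb{R}$. The induced action of $(\mathbb{R}\rtimes\mathbb{R})/K\cong\mathbb{R}$ on $\mathbb{R}^{K}=\mathbb{R}$ is then $y\cdot a=e^{cy}a$, which is nontrivial because $c\neq0$.

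Next I would compute the three groups that control the filtration. Since $H^{2}$ of $(\mathbb{R},+)$ with values in $(\mathbb{R},+)$ vanishes for every action by Lemma~\ref{smaller dimensional groups}(4), both $H^{2}((\mathbb{R}\rtimes\mathbb{R})/K,\mathbb{R}^{K})=H^{2}(\mathbb{R},\mathbb{R})$ and $H^{2}(K,\mathbb{R})=H^{2}(\mathbb{R},\mathbb{R})$ are trivial, and hence so is $(H^{2}(K,\mathbb{R}))^{(\mathbb{R}\rtimes\mathbb{R})/K}$. Since $K$ acts trivially, Lemma~\ref{smaller dimensional groups}(2) gives $H^{1}(K,\mathbb{R})\cong(\mathbb{R},+)$, realized by the definable homomorphisms $(x,0)\mapsto\lambda x$.

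The heart of the argument, and the step I expect to be the main obstacle, is to pin down the induced $(\mathbb{R}\rtimes\mathbb{R})/K$-action on $H^{1}(K,\mathbb{R})$ and to see that it is nontrivial precisely because $c\neq1$. Representing a class by $y_{0}\in\mathbb{R}$ and using the standard formula $(y_{0}\cdot f)(k)=\gamma((0,y_{0}))\bigl(f((0,y_{0})^{-1}k(0,y_{0}))\bigr)$, a direct conjugation computation in $\mathbb{R}\rtimes\mathbb{R}$ gives $(0,y_{0})^{-1}(x,0)(0,y_{0})=(e^{-y_{0}}x,0)$, so that the homomorphism $\lambda$ is sent to $e^{cy_{0}}\cdot\lambda e^{-y_{0}}=\lambda e^{(c-1)y_{0}}$. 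Thus $y_{0}$ acts on $H^{1}(K,\mathbb{R})\cong\mathbb{R}$ by scaling by $e^{(c-1)y_{0}}$, which is nontrivial exactly when $c\neq1$. This is precisely where the hypothesis $c\neq1$ enters, and it is why the case $c=1$ must be handled separately. Consequently $H^{1}((\mathbb{R}\rtimes\mathbb{R})/K,H^{1}(K,\mathbb{R}))=H^{1}(\mathbb{R},\mathbb{R},\gamma')=\{0\}$ for this nontrivial $\gamma'$, by Lemma~\ref{smaller dimensional groups}(3).

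Finally I would assemble the pieces. With all three groups $H^{2}((\mathbb{R}\rtimes\mathbb{R})/K,\mathbb{R}^{K})$, $H^{1}((\mathbb{R}\rtimes\mathbb{R})/K,H^{1}(K,\mathbb{R}))$, and $(H^{2}(K,\mathbb{R}))^{(\mathbb{R}\rtimes\mathbb{R})/K}$ trivial, Corollary~\ref{all you need}(1) yields $H^{2}(\mathbb{R}\rtimes\mathbb{R},\mathbb{R},\gamma)=\{0\}$. For $H^{1}$ I would return to the filtration of Fact~\ref{C:HyS}: the bottom piece is $F^{1}H^{1}=H^{1}((\mathbb{R}\rtimes\mathbb{R})/K,\mathbb{R}^{K})=\{0\}$ since the action there is nontrivial (Lemma~\ref{smaller dimensional groups}(3)), while the top quotient satisfies $H^{1}/F^{1}H^{1}\leq(H^{1}(K,\mathbb{R}))^{(\mathbb{R}\rtimes\mathbb{R})/K}$. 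Since the scaling action by $e^{(c-1)y_{0}}$ with $c\neq1$ fixes only $0$, this invariant group is trivial, and therefore $H^{1}(\mathbb{R}\rtimes\mathbb{R},\mathbb{R},\gamma)=\{0\}$ as well.
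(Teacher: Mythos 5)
Your proof is correct and follows exactly the route the paper intends: the paper's own ``proof'' is just the one-line citation of Lemma~\ref{smaller dimensional groups} and Corollary~\ref{all you need} (via the filtration of Fact~\ref{C:HyS}), with $K=\{(x,0)\}$ as the normal subgroup. Your explicit computation that the quotient acts on $H^{1}(K,\mathbb{R})\cong\mathbb{R}$ by scaling by $e^{(c-1)y_{0}}$ --- so that both $H^{1}(\mathbb{R},H^{1}(K,\mathbb{R}))$ and the invariants $(H^{1}(K,\mathbb{R}))^{G/K}$ vanish precisely when $c\neq 1$ --- is exactly the detail the paper omits, and it correctly identifies where the hypothesis $c\neq 1$ is used.
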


Hence if $c\not=1$ the only group extensions are the semidirect products.

We thus turn to the case $c=1$.  Similar computations yield
$H^{1}\left(\mathbb{R}\rtimes\mathbb{R},\mathbb{R},\gamma\right)\simeq\mathbb{R}$
and
$H^{2}\left(\mathbb{R}\rtimes\mathbb{R},\mathbb{R},\gamma\right)\preceq
\mathbb{R}$.
We prove that if $c=1$ then
$H^{2}\left(\mathbb{R}\rtimes\mathbb{R},\mathbb{R},\gamma\right)\simeq\left(\mathbb{R},+\right)$,
by finding a nontrivial extension.

\begin{definition}
For $k\in\mathbb{R}\setminus\left\{ 0\right\}$, let
$T_{k}$ denote the group defined on the set
$\mathbb{R}\times\left(\mathbb{R}\times\mathbb{R}\right)$ whose
product is given by
\[
\left(x_{1},\left(y_{1},z_{1}\right)\right)\cdot_k \left(x_{2},\left(y_{2},z_{2}\right)\right)=\left(x_{1}+x_{2}e^{z_{1}}+ky_{2}z_{1}e^{z_{1}},\left(y_{1}+y_{2}e^{z_{1}},z_{1}+z_{2}\right)\right).
\]
\end{definition}
Easy calculations show that $T_{k}$ indeed is a group.
We also have

\begin{fact}\label{Tk}
Let $i\left(x\right)=\left(x,\left(0,0\right)\right)$,
$\pi\left(x,\left(y,z\right)\right)=\left(y,z\right)$, and
$k\in\mathbb{R}\setminus\left\{ 0\right\} $. Then
$0\rightarrow\mathbb{R}\overset{i}{\rightarrow}
T_{k}\overset{\pi}{\rightarrow}\mathbb{R}\rtimes\mathbb{R}\rightarrow0$
is a definable extension of $\mathbb{R}\rtimes\mathbb{R}$ by
$\mathbb{R}$ with action
$\gamma\left(\left(x,y\right)\right)\left(a\right)=e^{y}a$.
\end{fact}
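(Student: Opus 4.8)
The plan is to check in turn the ingredients in the definition of a definable group extension, taking for granted (as noted just above the statement) that $(T_k,\cdot_k)$ is a group.

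First I would verify that $i$ is an injective homomorphism of $(\mathbb{R},+)$ into $T_k$. Since the left factor of $i(a)\cdot_k i(b)$ has $z_1=0$, both the $k$-summand and the exponential factor $e^{z_1}$ trivialize, so one computes $i(a)\cdot_k i(b)=(a+b,(0,0))=i(a+b)$; injectivity is immediate, and the image is the copy $\{(x,(0,0)):x\in\mathbb{R}\}$ of $\mathbb{R}$ sitting in the first coordinate. Next I would read off the second coordinate of $\cdot_k$ to see that $\pi$ carries $(x_1,(y_1,z_1))\cdot_k(x_2,(y_2,z_2))$ to $(y_1+y_2e^{z_1},z_1+z_2)$, which is exactly the operation of $\mathbb{R}\rtimes\mathbb{R}$ under the action $\gamma(x)=e^{x}$ fixed in the Remark preceding the statement; hence $\pi$ is a surjective homomorphism. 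Exactness in the middle is then purely set-theoretic, as $\ker\pi=\{(x,(y,z)):(y,z)=(0,0)\}$ coincides with $\im i$.

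The one genuinely computational step---and the place where I expect the $k$-dependent terms to do their work---is verifying the conjugation action and, with it, normality of $\im i$. First I would solve $g\cdot_k g^{-1}=(0,(0,0))$ for $g=(x,(y,z))$, obtaining $g^{-1}=(kyz\,e^{-z}-xe^{-z},(-ye^{-z},-z))$. Then, for $n=i(a)=(a,(0,0))$, one computes $g\cdot_k i(a)=(x+ae^{z},(y,z))$ and multiplies on the right by $g^{-1}$. The crux is that the first coordinate of $g\cdot_k i(a)\cdot_k g^{-1}$ is a sum of three pieces: the carried $x+ae^{z}$, the contribution $x'e^{z}=kyz-x$ coming from $g^{-1}$, and the cocycle term $ky'ze^{z}=-kyz$; these combine to leave exactly $ae^{z}$, while the second coordinate returns to $(0,0)$. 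Thus $g\cdot_k i(a)\cdot_k g^{-1}=(ae^{z},(0,0))=i(e^{z}a)$, which simultaneously shows $\im i$ is normal and identifies the action as $\gamma((y,z))(a)=e^{z}a$, i.e.\ the asserted $\gamma((x,y))(a)=e^{y}a$ after relabeling coordinates. The main obstacle is thus bookkeeping: ensuring the two $kyz$-terms cancel, which is precisely what forces the $ze^{z}$ shape of the cocycle built into the definition of $T_k$.

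Finally, definability is automatic: the operation $\cdot_k$, the maps $i$ and $\pi$, and the inversion are all given by polynomials in the coordinates together with the definable exponential, so the entire sequence together with the action is definable in $\mathcal{R}$.
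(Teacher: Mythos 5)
Your proposal is correct: the paper states this Fact without proof (deferring to ``easy calculations''), and your direct verification---$i$ and $\pi$ are homomorphisms, $\ker\pi=\mathrm{im}\,i$, the explicit inverse $g^{-1}=\left(kyze^{-z}-xe^{-z},\left(-ye^{-z},-z\right)\right)$, and the conjugation computation $g\cdot_k i(a)\cdot_k g^{-1}=i\left(e^{z}a\right)$ identifying the action---is exactly the routine check the paper intends, with all signs and cancellations correct.
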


\begin{proposition}\label{T:2.4.2}
Let $\left(\mathbb{R},+\right)$ be a definable
$\left(\mathbb{R}\rtimes\mathbb{R},\cdot\right)$-module with action
\\ $\gamma\left(\left(x,y\right)\right)\left(a\right)=e^{y}a$. Then
$H^{2}\left(\mathbb{R}\rtimes\mathbb{R},\mathbb{R},\gamma\right)\simeq\left(\mathbb{R},+\right)$.
\end{proposition}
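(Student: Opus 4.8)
The bound $H^{2}(\mathbb{R}\rtimes\mathbb{R},\mathbb{R},\gamma)\preceq\mathbb{R}$ recorded just above tells us that this cohomology group, which as in the proof of Theorem~\ref{T:2.3} carries the structure of an $\mathbb{R}$-vector space, is either $\{0\}$ or isomorphic to $(\mathbb{R},+)$. So the plan, exactly as in the Heisenberg case, is to exhibit a single extension whose class is nonzero. Fact~\ref{Tk} already hands us the candidates $T_{k}$ for $k\neq0$; it remains to show that for such $k$ the extension $T_{k}$ does not split, i.e.\ represents a nonzero element of $H^{2}(\mathbb{R}\rtimes\mathbb{R},\mathbb{R},\gamma)$.

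To set this up, write a generic element of $T_{k}$ as $(x,(y,z))$ with $x\in N=(\mathbb{R},+)$ the normal factor and $(y,z)\in H=\mathbb{R}\rtimes\mathbb{R}$, so that the action is $\gamma((y,z))(a)=e^{z}a$. Reading the product in $T_{k}$ against the reconstruction formula of Fact~\ref{ExtensionsAndCohomology} identifies the associated $2$-cocycle as
\[
c\bigl((y_{1},z_{1}),(y_{2},z_{2})\bigr)=k\,y_{2}z_{1}e^{z_{1}}.
\]
A splitting of the extension is the same as a definable homomorphic section $s(y,z)=(\sigma(y,z),(y,z))$, and the homomorphism condition unwinds to the functional equation
\[
\sigma(y_{1}+y_{2}e^{z_{1}},\,z_{1}+z_{2})=\sigma(y_{1},z_{1})+e^{z_{1}}\sigma(y_{2},z_{2})+k\,y_{2}z_{1}e^{z_{1}}.
\]
The goal is to show this has no definable solution $\sigma$ when $k\neq0$.

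The mechanism is to pin $\sigma$ down from its restrictions to the two one-parameter subgroups and then compare coefficients. Setting $z_{1}=z_{2}=0$ shows that $v(y):=\sigma(y,0)$ is additive, hence $v(y)=\lambda y$ for a constant $\lambda$ by o-minimality (a definable homomorphism $(\mathbb{R},+)\to(\mathbb{R},+)$ is determined by its value at $1$, Fact~\ref{P:homen1iguales}). Setting $y_{1}=y_{2}=0$ shows that $u(z):=\sigma(0,z)$ satisfies $u(z_{1}+z_{2})=u(z_{1})+e^{z_{1}}u(z_{2})$, i.e.\ $z\mapsto(u(z),z)$ is a definable one-parameter subgroup of $\mathbb{R}\rtimes\mathbb{R}$; by Fact~\ref{P:homen1iguales} this forces $u(z)=\mu(e^{z}-1)$ for a constant $\mu$. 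Specializing the functional equation to $z_{2}=0$, $y_{1}=0$ then expresses $\sigma$ everywhere in terms of $u$ and $v$, yielding $\sigma(y,z)=\mu(e^{z}-1)+\lambda y+k\,yz$. Substituting this back into the full functional equation, every term cancels except a residual $k\,y_{1}z_{2}$, so the equation can hold for all arguments only if $k=0$ --- contradicting $k\neq0$. Hence $T_{k}$ is nonsplit, its class in $H^{2}$ is nonzero, and therefore $H^{2}(\mathbb{R}\rtimes\mathbb{R},\mathbb{R},\gamma)\simeq(\mathbb{R},+)$.

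The main obstacle is precisely the step that forces a hypothetical section into the rigid form $\mu(e^{z}-1)+\lambda y+k\,yz$: one must use o-minimality (through Fact~\ref{P:homen1iguales}) to rule out wild solutions of the two restricted functional equations, after which the contradiction is a one-line coefficient comparison. If one prefers to avoid solving these functional equations outright, the same obstruction can instead be extracted by the square-comparison device of Proposition~\ref{P:EcEd}: evaluating a putative section at $(1,1)\in H$ and comparing $s(g)^{2}$ with $s(g^{2})$ isolates the coefficient $k$ directly and again returns $k=0$.
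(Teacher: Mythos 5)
Your proposal is correct and follows the paper's proof exactly: the bound $H^{2}(\mathbb{R}\rtimes\mathbb{R},\mathbb{R},\gamma)\preceq(\mathbb{R},+)$ together with the $\mathbb{R}$-vector-space structure reduces everything to exhibiting one nonsplit extension, namely $T_{k}$ from Fact~\ref{Tk}. The only difference is that the paper merely asserts nontriviality (with the pairwise inequivalence of the $T_{k}$ recorded only afterwards in Fact~\ref{F:TcTd}), whereas you verify the non-splitting outright by solving the restricted functional equations via Fact~\ref{P:homen1iguales} and isolating the residual term $k\,y_{1}z_{2}$ --- a computation that checks out, so you have in fact supplied a detail the paper leaves implicit.
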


\begin{proof}
We already have that
$H^{2}\left(\mathbb{R}\rtimes\mathbb{R},\mathbb{R},\gamma\right)\preceq\left(\mathbb{R},+\right)$
and thus is either $\{0\}$ or $\mathbb R$. But by Fact \ref{Tk}, there is a
nontrivial extension so $H^{2}\left(\mathbb{R}\rtimes\mathbb{R},\mathbb{R},\gamma\right)$ must be isomorphic to $\mathbb R$.
\end{proof}

Furthermore, as in Proposition \ref{P:EcEd} one can show that the family
of nontrivial extensions is precisely the family of groups $T_{k}$.

\begin{fact}\label{F:TcTd} Let $c,d\in\mathbb{R}\setminus\left\{ 0\right\} $. If $c\neq d$ then the extensions $\left(T_{c},\pi\right)$ and $\left(T_{d},\pi\right)$ are not definably equivalent.
\end{fact}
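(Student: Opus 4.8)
The plan is to mirror the proof of Proposition~\ref{P:EcEd}, adapted to the nonabelian base $\mathbb{R}\rtimes\mathbb{R}$. Suppose toward a contradiction that for $c\neq d$ there is a definable homomorphism $\varphi\colon T_{c}\to T_{d}$ with $\varphi\circ i=i$ and $\pi\circ\varphi=\pi$, i.e.\ making the extension diagram commute. Since $\pi\bigl(x,(y,z)\bigr)=(y,z)$, the condition $\pi\circ\varphi=\pi$ says that $\varphi$ preserves the $(y,z)$-part and can alter only the $N$-coordinate; hence on the two one-parameter subgroups generating $H=\mathbb{R}\rtimes\mathbb{R}$ we must have $\varphi\bigl(0,(1,0)\bigr)=\bigl(s,(1,0)\bigr)$ and $\varphi\bigl(0,(0,1)\bigr)=\bigl(t,(0,1)\bigr)$ for some $s,t\in\mathbb{R}$.

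First I would pin down $\varphi$ on the subgroups $\{(0,(y,0))\}$ and $\{(0,(0,z))\}$, each of which is a copy of $(\mathbb{R},+)$ inside $T_{c}$. Restricted to each, $\varphi$ is a definable homomorphism from $(\mathbb{R},+)$ into $T_{d}$, so by Fact~\ref{P:homen1iguales} it is determined by its value at $1$. For the $y$-direction this yields the linear map $y\mapsto\bigl(sy,(y,0)\bigr)$; for the $z$-direction, because of the exponential twist in the product of $T_{d}$, the matching homomorphism $z\mapsto\bigl(a(z),(0,z)\bigr)$ must solve the twisted additivity equation $a(z_{1}+z_{2})=a(z_{1})+e^{z_{1}}a(z_{2})$ with $a(1)=t$, whose definable solution is $a(z)=\tfrac{t}{e-1}(e^{z}-1)$. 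Writing an arbitrary element of $T_{c}$ as a product $i(x)\cdot_{c}(0,(y,0))\cdot_{c}(0,(0,z))$ and combining $\varphi\circ i=i$ with the two computed restrictions, I would obtain an explicit closed form for $\varphi\bigl(x,(y,z)\bigr)$ in which a term records the difference between the cocycles $k y_{2}z_{1}e^{z_{1}}$ of $T_{c}$ and $T_{d}$.

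The contradiction then arises, exactly as in Proposition~\ref{P:EcEd}, by comparing $\varphi(g^{2})$ with $\varphi(g)^{2}$ for a well-chosen element such as $g=(0,(1,1))$, for which $g\cdot_{c}g=\bigl(ce,(1+e,2)\bigr)$ already exhibits the parameter $c$. Since $\varphi$ is a homomorphism these two computations must agree, but they will differ by a nonzero multiple of $(c-d)$, forcing $c=d$ and hence the desired contradiction. Note that $\varphi$ is automatically continuous by Fact~\ref{P:homdefcont}, so there are no definability subtleties in invoking Fact~\ref{P:homen1iguales}.

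The main obstacle I expect is purely computational bookkeeping rather than any conceptual difficulty: unlike the Heisenberg setting of Proposition~\ref{P:EcEd}, the base here is nonabelian and carries the action $e^{z}$, so the factors $e^{z_{1}}$ and the cocycle term $ky_{2}z_{1}e^{z_{1}}$ must be tracked carefully through the reconstruction of $\varphi$ and through the squaring computation. In particular, solving the twisted additivity equation for the $z$-direction and then correctly assembling $\varphi$ on all of $T_{c}$ are where sign and exponential errors are most likely to creep in; once the closed form of $\varphi$ is in hand, the extraction of the $(c-d)$ term is routine.
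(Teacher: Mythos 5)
Your proposal is correct and is exactly the route the paper intends: the paper states this Fact without a written proof, remarking only that it follows ``as in Proposition~\ref{P:EcEd}'', and your adaptation---pinning down $\varphi$ on the two one-parameter subgroups via Fact~\ref{P:homen1iguales}, solving the twisted additivity equation to get $a(z)=\tfrac{t}{e-1}(e^{z}-1)$ in the $z$-direction, assembling $\varphi(x,(y,z))=(x+sy+a(z),(y,z))$, and then comparing $\varphi(g^{2})$ with $\varphi(g)^{2}$ for $g=(0,(1,1))$---is precisely that argument transported to the base $\mathbb{R}\rtimes\mathbb{R}$. The final computation indeed closes: $\varphi(g^{2})$ has first coordinate $ce+s(1+e)+t(e+1)$ while $\varphi(g)^{2}$ has first coordinate $(s+t)(1+e)+de$, so equality forces $c=d$, the contradiction you anticipated.
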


As we show below, the groups $T_{k}$ as $k$ ranges over $\mathbb R\setminus\{0\}$ are definably isomorphic.
We thus adopt notation that appears in the literature by renaming $T_1$ as $G_3$.

\begin{lemma}\label{P:2.4.3}
Let $\left(U,i,\pi\right)$ be a  nontrivial extension of
$\mathbb{R}\rtimes\mathbb{R}$ by $\left(\mathbb{R},\gamma\right)$
with action
$\gamma\left(\left(y,z\right)\right)\left(a\right)=e^{z}a$. Then the
group $U$ is definably  isomorphic to $G_3$.
\end{lemma}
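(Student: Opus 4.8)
The plan is to follow the template of Proposition~\ref{P:2.4}: first reduce an arbitrary nontrivial extension $U$ to one of the explicit groups $T_k$, and then exhibit a single definable group isomorphism $T_k\to T_1=G_3$. Thus the argument has two movements, a cohomological reduction and an explicit computation, neither of which is long.

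For the reduction I would recall from Proposition~\ref{T:2.4.2} that $H^{2}\left(\mathbb{R}\rtimes\mathbb{R},\mathbb{R},\gamma\right)\simeq(\mathbb{R},+)$ carries its natural structure as a one-dimensional $\mathbb{R}$-vector space. By Fact~\ref{Tk} each $T_k$ is a definable extension of $\mathbb{R}\rtimes\mathbb{R}$ by $\mathbb{R}$ realizing exactly this action, and the defining $2$-cocycle of $T_k$ is visibly $k$ times that of $T_1$, since the only $k$-dependent term in the product is $ky_2z_1e^{z_1}$. Hence $[T_k]=k\,[T_1]$ in $H^2$, and because $[T_1]\neq 0$ by Fact~\ref{F:TcTd}, the classes $\{[T_k]:k\in\mathbb{R}\}$ fill out the entire line $H^2$. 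Invoking the correspondence of Fact~\ref{ExtensionsAndCohomology} between equivalence classes of extensions and $H^2$, every nontrivial extension $U$ is therefore definably equivalent as an extension---in particular definably isomorphic as a group---to $T_k$ for some $k\neq 0$.

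It then remains only to identify each $T_k$ with $G_3=T_1$. I would verify that the definable bijection $\phi\colon T_k\to T_1$ given by $\phi(x,(y,z))=(x/k,(y,z))$ is a group isomorphism, exactly as the scaling map $\sigma$ worked in Proposition~\ref{P:2.4}. Indeed,
\begin{align*}
\phi\bigl((x_1,(y_1,z_1))\cdot_k(x_2,(y_2,z_2))\bigr)
&= \phi\bigl(x_1+x_2e^{z_1}+ky_2z_1e^{z_1},\,(y_1+y_2e^{z_1},z_1+z_2)\bigr)\\
&= \bigl(\tfrac{x_1}{k}+\tfrac{x_2}{k}e^{z_1}+y_2z_1e^{z_1},\,(y_1+y_2e^{z_1},z_1+z_2)\bigr)\\
&= \bigl(\tfrac{x_1}{k},(y_1,z_1)\bigr)\cdot_1\bigl(\tfrac{x_2}{k},(y_2,z_2)\bigr)\\
&= \phi(x_1,(y_1,z_1))\cdot_1\phi(x_2,(y_2,z_2)).
\end{align*}
Since $\phi$ is linear and invertible it is a definable isomorphism, whence $U\simeq T_k\simeq G_3$.

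The one point that genuinely requires care is the surjectivity claim in the reduction: one must know not merely that the $T_k$ are pairwise inequivalent (Fact~\ref{F:TcTd}) but that they \emph{exhaust} the nonzero classes of $H^2$. This is precisely what the linearity $[T_k]=k\,[T_1]$ delivers, using that a single nonzero vector spans the one-dimensional space $H^2\simeq\mathbb{R}$; without the vector-space structure supplied by Proposition~\ref{T:2.4.2}, a merely parametrized family of inequivalent extensions would not obviously cover every class. Everything else is a routine verification.
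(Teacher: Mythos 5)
Your proposal is correct and follows essentially the same route as the paper: a cohomological reduction showing every nontrivial extension is equivalent to some $T_k$, followed by the explicit scaling isomorphism $(x,(y,z))\mapsto(\tfrac{1}{k}x,(y,z))$ onto $G_3$. If anything, your treatment of the reduction is slightly more careful than the paper's, which infers exhaustion of the nonzero classes from the mere inequivalence of the $T_k$, whereas you correctly observe that it is the linearity $[T_k]=k\,[T_1]$ in the one-dimensional space $H^2\simeq(\mathbb{R},+)$ that actually delivers surjectivity.
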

\begin{proof}
Since we have that
$H^{2}\left(\mathbb{R}\rtimes\mathbb{R},\mathbb{R},\gamma\right)\simeq\left(\mathbb{R},+\right)$
and also that the groups $T_{k}$ where $k\in\mathbb R\setminus\{0\}$
form a family of nonisomorphic extensions, it
follows that all the possible groups are isomorphic to $T_k$ for some $k$.
To prove the lemma, we only need to show that every $T_k$
is isomorphic to either $G_{3}$ or to the group
$\mathbb{R}\rtimes_{\gamma}\left(\mathbb{R}\rtimes\mathbb{R}\right)$.

Note first that if $k=0$ we immediately have an isomorphism with
$\mathbb{R}\rtimes_{\gamma}\left(\mathbb{R}\rtimes\mathbb{R}\right)$).
Now let $k\not=0$ and let $f$ denote the cocycle
$f\left(\left(y_{1},z_{1}\right),\left(y_{2},z_{2}\right)\right)=y_{2}z_{1}e^{z_{1}}$
of the extension $G_3$.
Then $T_k$ is a group associated with the cocycle $kf$ and its product is given by
\[
\left(x_{1},\left(y_{1},z_{1}\right)\right)\cdot_k\left(x_{2},\left(y_{2},z_{2}\right)\right)
=\left(x_{1}+x_{2}e^{z_1}+kf\left(\left(y_{1},z_{1}\right),\left(y_{2},z_{2}\right)\right),
\left(y_{1}+y_{2}e^{z_1},z_{1}+z_{2}\right)\right).\]
Define $\sigma\colon T_{k} \to G_{3}$ by
\begin{equation*}
\left(x,\left(y,z\right)\right) \mapsto \left(\frac{1}{k}x,\left(y,z\right)\right).
\end{equation*}
We prove that $\sigma$ is an isomorphism. Indeed,

\begin{eqnarray*}
\sigma\left(x_{1},\left(y_{1},z_{1}\right)\right)& \cdot_k &\left(x_{2},\left(y_{2},z_{2}\right)\right)\\
& = &
\sigma\left(x_{1}+x_{2}e^{z_1}+kf\left(\left(y_{1},z_{1}\right),\left(y_{2},z_{2}\right)\right),
\left(y_{1}+y_{2}e^{z_1},z_{1}+z_{2}\right)\right)\\
& = &
\left(\frac{1}{k}\left(x_{1}+x_{2}e^{z_1}\right)+f\left(\left(y_{1},z_{1}\right),\left(y_{2},z_{2}\right)\right),
\left(y_{1}+y_{2}e^{z_1},z_{1}+z_{2}\right)\right)
\end{eqnarray*}
and
\begin{eqnarray*}
\sigma\left(x_{1},\left(y_{1},z_{1}\right)\right) & \cdot_1 & \sigma\left(x_{2},\left(y_{2},z_{2}\right)\right)\\
& = &
\left(\frac{1}{k}x_{1},\left(y_{1},z_{1}\right)\right) \cdot_1 \left(\frac{1}{k}x_{2},\left(y_{2},z_{2}\right)\right)\\
& = &
\left(\frac{1}{k}x_{1}+\frac{1}{k}x_{2}e^{z_1}+f\left(\left(y_{1},z_{1}\right),\left(y_{2},z_{2}\right)\right),
\left(y_{1}+y_{2}e{z_1},z_{1}+z_{2}\right)\right)
\end{eqnarray*}
as required.
\end{proof}

We now have

\begin{theorem}\label{T:2.4.4}
Let $\left(\mathbb{R},+\right)$ be a definable
$\left(\mathbb{R}\rtimes\mathbb{R},\cdot\right)$-module with action
\\ $\gamma\left(\left(x,y\right)\right)\left(a\right)=e^{y}a$. Then the
groups which are definable extensions of
$\mathbb{R}\rtimes\mathbb{R}$ by $\mathbb{R}$ are isomorphic to
either $G_{3}$ or
$\mathbb{R}\rtimes_{\gamma}\left(\mathbb{R}\rtimes\mathbb{R}\right)$.
\end{theorem}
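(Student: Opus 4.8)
The plan is to read the theorem off from the cohomological computation already in hand together with the classification of nontrivial extensions. By Fact~\ref{ExtensionsAndCohomology}, the equivalence classes of definable extensions of $\mathbb{R}\rtimes\mathbb{R}$ by $\mathbb{R}$ via $\gamma$ are in one-to-one correspondence with the elements of $H^{2}(\mathbb{R}\rtimes\mathbb{R},\mathbb{R},\gamma)$, which by Proposition~\ref{T:2.4.2} is isomorphic to $(\mathbb{R},+)$. Thus every such extension is attached to a cohomology class that is either zero or nonzero, and I would organize the whole argument around this dichotomy.

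First I would dispose of the trivial class: the zero element of $H^{2}$ corresponds to the split extension, whose total group is by definition the semidirect product $\mathbb{R}\rtimes_{\gamma}(\mathbb{R}\rtimes\mathbb{R})$ (equivalently, this is $T_{0}$ in the notation of Fact~\ref{Tk} and Lemma~\ref{P:2.4.3}). For any nonzero class the associated extension $(U,i,\pi)$ is nontrivial, so Lemma~\ref{P:2.4.3} applies verbatim and furnishes a definable group isomorphism $U\cong G_{3}$. Since every definable extension falls into exactly one of these two cases, the underlying group $U$ is definably isomorphic to either $\mathbb{R}\rtimes_{\gamma}(\mathbb{R}\rtimes\mathbb{R})$ or to $G_{3}$, which is precisely the assertion.

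There is essentially no obstacle remaining, since the substantive content has already been established: the bound $H^{2}\preceq\mathbb{R}$ together with the exhibition of the family $\{T_{k}\}$ as nontrivial extensions (Proposition~\ref{T:2.4.2}), the pairwise inequivalence of the $T_{k}$ as extensions (Fact~\ref{F:TcTd}), and the group isomorphism $T_{k}\cong G_{3}$ for $k\neq 0$ (Lemma~\ref{P:2.4.3}). The only point meriting a word of care is that the classification here is only up to group isomorphism, not up to equivalence of extensions: although Fact~\ref{F:TcTd} shows the $T_{k}$ for distinct $k$ are genuinely different \emph{extensions}, so that they fill out the one-parameter family $H^{2}\simeq\mathbb{R}$, Lemma~\ref{P:2.4.3} collapses all the nonsplit ones to the single isomorphism type $G_{3}$. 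It is exactly this collapse that produces the two-element list in the statement, and I would make sure the write-up flags the distinction between the two notions of equivalence so that the reader is not misled into expecting a continuum of isomorphism types.
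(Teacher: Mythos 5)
Your proposal is correct and follows essentially the same route as the paper, which states Theorem~\ref{T:2.4.4} as an immediate consequence of the preceding results: the identification of extension classes with $H^{2}\left(\mathbb{R}\rtimes\mathbb{R},\mathbb{R},\gamma\right)\simeq\left(\mathbb{R},+\right)$ (Proposition~\ref{T:2.4.2} via Fact~\ref{ExtensionsAndCohomology}), with the zero class giving the semidirect product and every nonzero class giving a nontrivial extension, hence a group isomorphic to $G_{3}$ by Lemma~\ref{P:2.4.3}. Your closing remark distinguishing equivalence of extensions from group isomorphism is exactly the right point to flag, and matches how the paper reconciles Fact~\ref{F:TcTd} with the two-element list in the theorem.
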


Unlike the previous cases the situation here is more subtle and thus additional
work is required to determine the possible isomorphism classes---as groups---of (orderable) decomposable
definable groups that arise. We begin with some easy
observations.

\begin{proposition}\label{P:3.1} Let $\gamma$ be the nontrivial action
of $\mathbb{R}\rtimes\mathbb{R}$ on $\left(\mathbb{R},+\right)$ given by
$\gamma\left(\left(x,y\right)\right)\left(a\right)=e^{cy}a$, where $c\not=0$.
Then $\mathbb{R}\rtimes_{\gamma}\left(\mathbb{R}\rtimes\mathbb{R}\right)$
is definably isomorphic to the group
$\mathbb{R}^{2}\rtimes_{\tau_c}\mathbb{R}$ where
\[\tau_c\left(z\right)\left(x,y\right)\coloneqq\left[\begin{array}{cc}
e^{cz} & 0\\
0 & e^{z}\end{array}\right]\left(\begin{array}{c}
x\\
y\end{array}\right).\]
Furthermore, the action is order-preserving if and only
if $c>0$.
\end{proposition}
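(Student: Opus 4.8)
The plan is to establish the isomorphism by an explicit change of coordinates and then transport the order. First I would unwind the conventions forced by the Remark following Theorem~\ref{T:1.2} and by Lemma~\ref{L:3.4.4.1}: write $H=\mathbb{R}\rtimes\mathbb{R}$ with operation $(x_1,y_1)(x_2,y_2)=(x_1+e^{y_1}x_2,y_1+y_2)$, and write $G=\mathbb{R}\rtimes_{\gamma}(\mathbb{R}\rtimes\mathbb{R})$ with $\gamma((x,y))(a)=e^{cy}a$, so that the operation on $G$ becomes
\[
(a_1,(x_1,y_1))\cdot(a_2,(x_2,y_2))=\bigl(a_1+e^{cy_1}a_2,\;(x_1+e^{y_1}x_2,\,y_1+y_2)\bigr).
\]
On the target side the operation of $\mathbb{R}^2\rtimes_{\tau_c}\mathbb{R}$ is
\[
((u_1,v_1),z_1)\cdot((u_2,v_2),z_2)=\bigl((u_1+e^{cz_1}u_2,\;v_1+e^{z_1}v_2),\,z_1+z_2\bigr).
\]

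Second, I would propose the map $\Phi\colon G\to \mathbb{R}^2\rtimes_{\tau_c}\mathbb{R}$ given by $\Phi(a,(x,y))=((a,x),y)$. This is visibly a definable bijection, so the only content is that it is a homomorphism, which is a direct substitution comparing the two displays above with $(u,v,z)=(a,x,y)$. The structural point worth recording is that conjugation by the top copy of $\mathbb{R}$ (the $y$-coordinate) acts simultaneously on the normal subgroup $N=\mathbb{R}$ by $a\mapsto e^{cy}a$ and on the normal subgroup of $H$ by $x\mapsto e^{y}x$, and these two scalings assemble exactly into $\tau_c(y)=\mathrm{diag}(e^{cy},e^{y})$ acting on the pair $(a,x)\in\mathbb{R}^2$. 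Thus $\Phi$ intertwines the two operations and is the desired definable group isomorphism; this first assertion of the proposition is the routine part.

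Third, for the order statement I would carry the lexicographic order of $G$ across $\Phi$. Since $N$ is least significant and, within $H$, the quotient coordinate $y$ dominates the normal coordinate $x$, the order on $G$ compares first $y$, then $x$, then $a$; under $\Phi$ this becomes the lexicographic order on $\mathbb{R}^2\rtimes_{\tau_c}\mathbb{R}$ with $z$ most significant and $\mathbb{R}^2$ ordered by $v$ then $u$. By Proposition~\ref{order preserving action} and Theorem~\ref{2-Cohomology}, this object is an ordered group precisely when each $\tau_c(z)$ is an order-preserving automorphism of $\mathbb{R}^2$, and I would then compare $\tau_c(z)=\mathrm{diag}(e^{cz},e^{z})$ against the fixed orientation prescribed for these groups, reading off the condition $c>0$ from the requirement that the orientation inherited from $G$ agree with that presentation, the sign of $c$ being the only datum distinguishing the two eigen-directions.

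The main obstacle I anticipate is precisely this last orientation bookkeeping. The group isomorphism is mechanical once the conventions are unwound, and every exponential that occurs is positive, so the delicate point is not whether $\mathbb{R}^2\rtimes_{\tau_c}\mathbb{R}$ is \emph{some} ordered group but whether the specific order inherited from $G$ through $\Phi$ matches the fixed presentation with the correct orientation. Isolating how the sign of $c$ governs this comparison, and confirming that the matching holds exactly when $c>0$, is where the care is required and is the content of the final clause.
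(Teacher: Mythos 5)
Your first two steps are correct and coincide exactly with the paper's argument: the paper's entire proof consists of the one-line observation that the regrouping map $\left(x,\left(y,z\right)\right)\mapsto\left(\left(x,y\right),z\right)$ is an isomorphism, which is your $\Phi$ up to renaming of coordinates, so the explicit verification you supply is exactly what the paper leaves to the reader.

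The gap is in your treatment of the final clause. You propose to read off the condition $c>0$ from the requirement, via Proposition~\ref{order preserving action} and Theorem~\ref{2-Cohomology}, that each $\tau_c\left(z\right)$ be an order-preserving automorphism of the lexicographically ordered $\mathbb{R}^{2}$. But that requirement is insensitive to the sign of $c$: each $\tau_c\left(z\right)$ is a diagonal matrix with positive entries $e^{cz}$ and $e^{z}$, and any such matrix preserves the lexicographic order on $\mathbb{R}^{2}$ no matter which coordinate is dominant, just as each $\gamma\left(\left(x,y\right)\right)$ is multiplication by the positive scalar $e^{cy}$ and hence preserves the order of $\left(\mathbb{R},+,<\right)$. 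Consequently the lexicographic order makes $\mathbb{R}^{2}\rtimes_{\tau_c}\mathbb{R}$ an ordered group for \emph{every} $c\neq0$ (the paper itself treats both signs of $c$ as ordered groups in Sections~\ref{Subsectionfinal} and~\ref{SubsectionT1}), so no amount of ``orientation bookkeeping'' inside the framework of Theorem~\ref{2-Cohomology} can produce a dichotomy at $c=0$; your plan, carried out, would declare the action order-preserving for every nonzero $c$. What the clause actually records is a different property: monotonicity of the action in the acting variable, i.e., that $h_{1}<h_{2}$ in $H$ implies $\gamma\left(h_{1}\right)\leq\gamma\left(h_{2}\right)$ in $\aut\left(N\right)\simeq\left(\mathbb{R}^{>0},\cdot\right)$, equivalently that positive elements act as expansions on the positive cone. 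Since $\gamma\left(\left(x,y\right)\right)=e^{cy}$ and the dominant coordinate in the lexicographic order on $H=\mathbb{R}\rtimes\mathbb{R}$ is $y$, this holds precisely when $c>0$: for $c<0$ a positive element with positive $y$-coordinate acts by the contraction $e^{cy}<1$. On the $\tau_c$ side the same dichotomy appears as whether positive $z$ expands the eigen-direction $\mathbb{R}\times\left\{0\right\}$, i.e., whether $e^{cz}>1$ for $z>0$. This ``expanding versus contracting'' reading is the one the paper uses to separate ordered-group isomorphism classes in Sections~\ref{8.4}--\ref{SubsectionT1}, and it is the sense in which the final clause is true.
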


\begin{proof}
The mapping
$\left(x,\left(y,z\right)\right) \mapsto
\left(\left(x,y\right),z\right)$ is an isomorphism.
\end{proof}

We now determine which of the groups
$\mathbb{R}^{2}\rtimes_{\tau_c}\mathbb{R}$ are definably isomorphic, where the
action $\tau_c$ is as displayed above.

\begin{proposition}\label{P:3.2}
Let $c\in\mathbb{R}\setminus\left\{ 0\right\} $. Then
$\mathbb{R}^{2}\rtimes_{\tau_{c}}\mathbb{R}$ and
$\mathbb{R}^{2}\rtimes_{\tau_{1/c}}\mathbb{R}$ are
definably isomorphic.
\end{proposition}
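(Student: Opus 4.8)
The plan is to exhibit an explicit definable group isomorphism, guided by the observation that the two actions $\tau_c$ and $\tau_{1/c}$ differ only by interchanging the roles of the two coordinates of $\mathbb{R}^2$ together with a rescaling of the acting copy of $\mathbb{R}$. Recall that the product on $\mathbb{R}^2\rtimes_{\tau_c}\mathbb{R}$ is given by
\[
((x_1,y_1),z_1)\cdot_c((x_2,y_2),z_2)=\bigl((x_1+e^{cz_1}x_2,\,y_1+e^{z_1}y_2),\,z_1+z_2\bigr),
\]
so that under $\tau_c$ the first coordinate is scaled by $e^{cz}$ and the second by $e^z$, whereas under $\tau_{1/c}$ the first coordinate is scaled by $e^{z/c}$ and the second again by $e^z$.

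First I would swap the two coordinates of $\mathbb{R}^2$: after this swap the action $\tau_c$ scales the new first coordinate by $e^z$ and the new second coordinate by $e^{cz}$. Rescaling the $\mathbb{R}$-factor by the factor $c$ then turns $e^z$ into $e^{z/c}$ and $e^{cz}$ into $e^z$, which is precisely the scaling pattern of $\tau_{1/c}$. These two manipulations combine into the candidate map $\sigma\colon\mathbb{R}^2\rtimes_{\tau_c}\mathbb{R}\to\mathbb{R}^2\rtimes_{\tau_{1/c}}\mathbb{R}$ defined by
\[
\sigma((x,y),z):=((y,x),cz).
\]
This $\sigma$ is visibly a definable bijection, being semialgebraic with inverse $((a,b),w)\mapsto((b,a),w/c)$, which exists precisely because $c\neq 0$.

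It then remains only to verify that $\sigma$ is a homomorphism, which is a direct computation: applying $\sigma$ to the $\cdot_c$-product above yields $((y_1+e^{z_1}y_2,\,x_1+e^{cz_1}x_2),\,c(z_1+z_2))$, and computing $\sigma((x_1,y_1),z_1)\cdot_{1/c}\sigma((x_2,y_2),z_2)$ through $\tau_{1/c}(cz_1)(y_2,x_2)=(e^{z_1}y_2,\,e^{cz_1}x_2)$ gives the same element. There is no genuine obstacle here beyond getting the bookkeeping of the swap-and-rescale correct; the only point requiring care is to confirm that the rescaling factor on the $\mathbb{R}$-coordinate is exactly $c$ (and not $1/c$), which is pinned down by matching the exponent $cz$ appearing in $\tau_c$ with the exponent appearing in $\tau_{1/c}$ after the coordinate swap.
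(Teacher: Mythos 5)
Your proof is correct and is essentially identical to the paper's: the paper uses the very same map $\varphi((x,y),z)=((y,x),cz)$ and verifies the homomorphism property by the same direct computation. Nothing to add.
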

\begin{proof}
Let $\varphi:\mathbb{R}^{2}\rtimes_{\tau_{c}}\mathbb{R} \to
\mathbb{R}^{2}\rtimes_{\tau_{1/c}}\mathbb{R}$ be the definable bijection
given by $((x,y),z) \stackrel{\varphi}{\mapsto} ((y,x),cz)$. As
\begin{eqnarray*}
\varphi\left(\left(\left(x,y\right),z\right)\cdot \left(\left(x^{\prime},y^{\prime}\right),z^{\prime}\right)\right) & = & \varphi\left(\left(x+e^{cz}x^{\prime},y+e^{z}y^{\prime}\right),z+z^{\prime}\right)\\
 & = & \left(\left(y+e^{z}y^{\prime},x+e^{cz}x^{\prime}\right),c\left(z+z^{\prime}\right)\right)\\
& = & \left(\left(y,x\right),cz\right)\cdot \left(\left(y^{\prime},x^{\prime}\right),cz^{\prime}\right)\\
 & = &  \varphi\left(\left(x,y\right),z\right)\cdot \varphi\left(\left(x^{\prime},y^{\prime}\right),z^{\prime}\right).
\end{eqnarray*}
we have that $\varphi$ is an isomorphism.
\end{proof}

The next observation follows
easily by direct calculation.

\begin{proposition}\label{F:3.1}
Let $d\in\mathbb{R}\setminus\left\{ 0\right\} $ and
$\left(\left(x,y\right),z\right)\in \mathbb{R}^{2}\rtimes_{\tau_{d}}\mathbb{R}$ be fixed but arbitrary.
Then
    \begin{enumerate}
\item If $z\neq0$, the function  $h_{z}:\mathbb{R} \rightarrow
\mathbb{R}^{2}\rtimes_{\tau_{d}}\mathbb{R}$ defined by

\[w \mapsto \left(\left(\left(\frac{e^{dzw}-1}{e^{dz}-1}\right)x,\left(\frac{e^{zw}-1}{e^{z}-1}\right)y\right),wz\right)\]
is a definable homomorphism.

\item If $z=0$, the function $h_{0}:\mathbb{R}\rightarrow \mathbb{R}^{2}\rtimes_{\tau_{d}}\mathbb{R}$
given by
\[w \mapsto \left(\left(wx,wy\right),0\right)\]
is a definable homomorphism.
  \end{enumerate}
\end{proposition}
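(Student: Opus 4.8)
The plan is to verify directly that each of $h_{0}$ and $h_{z}$ respects the group operation of $\mathbb{R}^{2}\rtimes_{\tau_{d}}\mathbb{R}$; definability of both maps is then immediate, since their defining formulas are built from field operations together with the (definable) exponential of $\mathcal R$, and the denominators $e^{dz}-1$ and $e^{z}-1$ are nonzero whenever $z\neq 0$ because $d\neq 0$ and $\exp$ is strictly monotone on the ordered field. Recall from the product exhibited in the proof of Proposition~\ref{P:3.2} that in $\mathbb{R}^{2}\rtimes_{\tau_{d}}\mathbb{R}$ one has
\[
\bigl((x,y),z\bigr)\cdot\bigl((x',y'),z'\bigr)=\bigl((x+e^{dz}x',\,y+e^{z}y'),\,z+z'\bigr).
\]

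First I would dispose of the case $z=0$. Here $h_{0}(w)=((wx,wy),0)$, and since the third coordinate of each factor is $0$ the action factor $e^{d\cdot 0}=e^{0}=1$ is trivial, so multiplying gives $h_{0}(w)\cdot h_{0}(w')=\bigl(((w+w')x,(w+w')y),0\bigr)=h_{0}(w+w')$, as required.

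For $z\neq 0$, writing $A(w):=\dfrac{e^{dzw}-1}{e^{dz}-1}$ and $B(w):=\dfrac{e^{zw}-1}{e^{z}-1}$, the value $h_{z}(w)$ equals $((A(w)x,B(w)y),wz)$. Multiplying $h_{z}(w)$ by $h_{z}(w')$ via the product above, the third coordinate is $wz+w'z=(w+w')z$, while matching the first two coordinates (for all $x,y$) reduces to the two scalar identities
\[
A(w)+e^{dzw}A(w')=A(w+w'),\qquad B(w)+e^{zw}B(w')=B(w+w').
\]
I would verify these by clearing the common denominator and telescoping: for $A$, the numerator of the left-hand side is $(e^{dzw}-1)+e^{dzw}(e^{dzw'}-1)=e^{dz(w+w')}-1$, which is exactly the numerator of $A(w+w')$; the identity for $B$ is the special case $d=1$. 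This confirms $h_{z}(w)\cdot h_{z}(w')=h_{z}(w+w')$.

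Since the computation is a short algebraic manipulation, there is no genuine obstacle here; the only points meriting care are checking that the denominators do not vanish (guaranteed by $z\neq 0$, $d\neq 0$, and the strict monotonicity of $\exp$) and recognizing the cancellation $e^{dzw}\cdot e^{dzw'}=e^{dz(w+w')}$ that collapses the numerator. I would present the two displayed identities as the heart of the verification and leave the remaining bookkeeping to the reader, consistent with the phrasing that the observation \emph{follows easily by direct calculation}.
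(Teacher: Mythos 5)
Your computation is correct and is exactly the ``direct calculation'' the paper alludes to without writing out: the paper gives no proof of Proposition~\ref{F:3.1}, stating only that it follows easily by direct calculation. Your verification — the telescoping identities $A(w)+e^{dzw}A(w')=A(w+w')$ and $B(w)+e^{zw}B(w')=B(w+w')$, plus the non-vanishing of the denominators when $z\neq0$ and $d\neq0$ — is precisely that calculation, carried out correctly against the semidirect product law $\bigl((x,y),z\bigr)\cdot\bigl((x',y'),z'\bigr)=\bigl((x+e^{dz}x',\,y+e^{z}y'),\,z+z'\bigr)$.
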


With the way now paved we prove

\begin{lemma}\label{P:3.3}
Let $c,d\in\mathbb{R}\setminus\left\{ 0\right\} $ with
$c\notin\left\{d, 1/d\right\} $. Then
$\mathbb{R}^{2}\rtimes_{\tau_{c}}\mathbb{R}$ and
$\mathbb{R}^{2}\rtimes_{\tau_{d}}\mathbb{R}$ are not definably isomorphic.
\end{lemma}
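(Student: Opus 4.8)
The plan is to argue by contradiction: suppose $\varphi\colon\mathbb{R}^{2}\rtimes_{\tau_{c}}\mathbb{R}\to\mathbb{R}^{2}\rtimes_{\tau_{d}}\mathbb{R}$ is a definable group isomorphism and show that it forces $c\in\{d,1/d\}$. The first step is to locate a characteristic subgroup that $\varphi$ must respect. Writing elements as $((x,y),z)$ with product $((x,y),z)\cdot((x',y'),z')=((x+e^{cz}x',y+e^{z}y'),z+z')$, a direct commutator computation shows that the derived subgroup of $\mathbb{R}^{2}\rtimes_{\tau_{c}}\mathbb{R}$ is exactly the normal copy $N_{c}:=\{((x,y),0)\}\cong(\mathbb{R}^{2},+)$: since the quotient by $N_{c}$ is abelian the derived subgroup lies inside $N_{c}$, while commuting $((0,0),z)$ with $((x',y'),0)$ produces $(((e^{cz}-1)x',(e^{z}-1)y'),0)$, which for any fixed $z\neq0$ sweeps out all of $N_{c}$. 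Hence $N_{c}$ is characteristic and $\varphi(N_{c})=N_{d}$.

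Next I would extract the linear data. The restriction $A:=\varphi|_{N_{c}}\colon(\mathbb{R}^{2},+)\to(\mathbb{R}^{2},+)$ is a definable group isomorphism; restricting it to each coordinate axis gives definable homomorphisms $(\mathbb{R},+)\to(\mathbb{R}^{2},+)$, which by Fact~\ref{P:homen1iguales} are determined by their value at $1$ and hence linear, so $A\in\gl_{2}(\mathbb{R})$ is an invertible matrix. Likewise the induced map on the quotients $G_{c}/N_{c}\cong(\mathbb{R},+)\to G_{d}/N_{d}\cong(\mathbb{R},+)$ is a definable automorphism of $(\mathbb{R},+)$, hence multiplication by some $\lambda\neq0$. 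The crucial relation then comes from the conjugation action: for $g$ with last coordinate $z$, conjugation by $g$ acts on $N_{c}$ as $\tau_{c}(z)=\mathrm{diag}(e^{cz},e^{z})$, while $\varphi(g)$ has last coordinate $\lambda z$ and conjugation by it acts on $N_{d}$ as $\tau_{d}(\lambda z)=\mathrm{diag}(e^{\lambda d z},e^{\lambda z})$ (the $N_{d}$-component of $\varphi(g)$ is irrelevant since $N_{d}$ is abelian). Applying $\varphi$ to $gng^{-1}$ for $n\in N_{c}$ therefore yields $A\,\tau_{c}(z)=\tau_{d}(\lambda z)\,A$ for all $z\in\mathbb{R}$, i.e. $A\,\tau_{c}(z)\,A^{-1}=\tau_{d}(\lambda z)$.

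Finally I would compare spectra. Conjugate matrices have the same eigenvalues, so $\{e^{cz},e^{z}\}=\{e^{\lambda d z},e^{\lambda z}\}$ as unordered pairs for every $z\in\mathbb{R}$. Since $e^{az}=e^{bz}$ for all $z$ forces $a=b$, matching the two possible pairings gives either $c=\lambda d$ together with $1=\lambda$, whence $\lambda=1$ and $c=d$; or $c=\lambda$ together with $1=\lambda d$, whence $c=\lambda=1/d$. Both possibilities contradict the hypothesis $c\notin\{d,1/d\}$, completing the argument. The only genuinely delicate points are the two reductions to linear data---that $N_{c}$ is the derived subgroup (hence characteristic) and that $A$ and the quotient map are linear---but both are routine given the explicit group law and Fact~\ref{P:homen1iguales}; once the intertwining relation $A\,\tau_{c}(z)\,A^{-1}=\tau_{d}(\lambda z)$ is in hand the spectral comparison is immediate, so the main obstacle is simply assembling these pieces carefully rather than any hard calculation.
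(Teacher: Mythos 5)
Your proof is correct, but it takes a genuinely different and more conceptual route than the paper's. The paper argues directly with the putative isomorphism $\varphi$: it writes $\varphi\left(\left(x,y\right),z\right)=\theta_{1}\left(x\right)\theta_{2}\left(y\right)\theta_{3}\left(z\right)$ for the three definable one-parameter homomorphisms obtained by restricting to the coordinate axes, pins down their exact form via Proposition~\ref{F:3.1} and Fact~\ref{P:homen1iguales}, and then extracts a contradiction by comparing $\varphi\left(g^{2}\right)$ with $\left(\varphi\left(g\right)\right)^{2}$ for $g=\left(\left(1,0\right),1\right)$ and $g=\left(\left(0,1\right),1\right)$, through a fairly long case analysis on the parameters $z_{1},z_{2},z_{3}$ (ruling out $z_{3}=0$, $z_{3}=1$, and so on). You instead observe that $N_{c}=\mathbb{R}^{2}\times\left\{0\right\}$ is the derived subgroup---your commutator computation is correct, since for fixed $z\neq0$ the map $\left(x',y'\right)\mapsto\left(\left(e^{cz}-1\right)x',\left(e^{z}-1\right)y'\right)$ is onto---hence characteristic, so $\varphi\left(N_{c}\right)=N_{d}$ comes for free; after linearizing $\varphi|_{N_{c}}$ and the induced quotient map (both via Fact~\ref{P:homen1iguales}, exactly the tool the paper uses), everything reduces to the intertwining relation $A\,\tau_{c}\left(z\right)A^{-1}=\tau_{d}\left(\lambda z\right)$ and a comparison of eigenvalues. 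This shortcut eliminates the paper's case analysis entirely and makes transparent where the exceptional set $\left\{d,1/d\right\}$ comes from (the two ways of matching spectra); what the paper's computational route buys in exchange is the explicit form that any isomorphism must take, information of the kind that is reused in the ordered-group analysis of Section~\ref{sectionOrdered} and in the companion argument distinguishing $G_{3}$ from $\mathbb{R}^{2}\rtimes_{\tau_{c}}\mathbb{R}$. One small point to tighten: since the eigenvalue pairing in $\left\{e^{cz},e^{z}\right\}=\left\{e^{\lambda dz},e^{\lambda z}\right\}$ could a priori depend on $z$, you should not match the pairings ``for all $z$'' at once; instead fix a single $z\neq0$ and take logarithms to get the multiset identity $\left\{c,1\right\}=\left\{\lambda d,\lambda\right\}$, which yields $c=d$ or $c=1/d$ immediately. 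That is a one-line repair, not a gap.
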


\begin{proof}
For a contradiction suppose there is a definable isomorphism
$\varphi:\mathbb{R}^{2}\rtimes_{\tau_{c}}\mathbb{R}\rightarrow
\mathbb{R}^{2}\rtimes_{\tau_{d}}\mathbb{R}$. For $i\in\left\{ 1,2,3\right\}$ let
$\theta_{i}:\mathbb{R}\rightarrow\mathbb{R}^{2}\rtimes_{\tau_{d}}\mathbb{R}$ given by
$\theta_{1}\left(x\right)=\varphi\left(\left(x,0\right),0\right)$,
$\theta_{2}\left(y\right)=\varphi\left(\left(0,y\right),0\right)$ and,
$\theta_{3}\left(z\right)=\varphi\left(\left(0,0\right),z\right)$.
It is evident that each $\theta_{i}$ is a definable homomorphism and
$\varphi\left(\left(x,y\right),z\right)  =
\theta_{1}\left(x\right)\theta_{2}\left(y\right)\theta_{3}\left(z\right)$.

For $i\in\left\{ 1,2,3\right\}$ put
$\left(\left(x_{i},y_{i}\right),z_{i}\right):={\theta_{i}}\left(1\right)$
and define the functions $h_{i,z_{i}}: \mathbb{R}\to \mathbb{R}^{2}\rtimes_{\tau_{d}}\mathbb{R}$ by

\[ {h_{i,z_{i}}}\left(x\right) =
\begin{cases}
\left(\left(\left(\frac{e^{dz_{i}x}-1}{e^{dz_{i}}-1}\right)x_{i},\left(\frac{e^{z_{i}x}-1}{e^{z_{i}}-1}\right)y_{i}\right),xz_{i}\right) &
\text{if $z_{i}\neq0$} \\
\noalign{\medskip}
\left(\left(xx_{i},xy_{i}\right),0\right) & \text{if $z_{i}=0$.}
\end{cases}
\]
\smallskip

\noindent These are definable homomorphisms by Proposition~\ref{F:3.1}. Since
$h_{i,z_{i}}\left(1\right)=\theta_{i}\left(1\right)$ for all $z_{i}$ and $i\in\left\{ 1,2,3\right\}$,
Fact~\ref{P:homen1iguales} implies that $h_{i,z_{i}}=\theta_{i}$
for all $i\in\left\{ 1,2,3\right\}$.

We assert that $z_1=z_2=0$. Indeed, suppose $z_1\not=0$. Let
${\pi_{3}}:\mathbb{R}^{2}\rtimes_{\tau_{d}}\mathbb{R}\rightarrow\mathbb{R}$
denote the projection homomorphism onto the third coordinate.
Then
\[
\pi_{3}\left(\varphi\left(\left(\left(1,0\right),1\right)^{2}\right)\right)
=
\left(1+e^{c}\right)z_{1}+2z_{3}\]

and

\[\pi_{3}\left(\left(\varphi\left(\left(1,0\right),1\right)\right)^{2}\right)
=  2z_{1}+2z_{3},
\]
from which it follows that $z_{1} = 0$, contradiction.
A similar argument, replacing $\left(\left(1,0\right),1\right)$ by $\left(\left(0,1\right),1\right)$, shows that $z_2=0$.
%
Hence
$\theta_{1}\left(x\right)=\left(\left(xx_{1},xy_{1}\right),0\right)$
and
$\theta_{2}\left(x\right)=\left(\left(xx_{2},xy_{2}\right),0\right)$.

We next show that $z_{3}\notin\left\{ 0,1\right\}$.
Direct computation yields that
\[
\varphi\left(\left(\left(1,0\right),1\right)^{2}\right) =
\left(\left(\left(1+e^{c}\right)x_{1}+\left(e^{dz_{3}}+
1\right)x_{3},\left(1+e^{c}\right)y_{1}+\left(1+e^{z_{3}}\right)y_{3}\right),2z_{3}\right),\]
and
\[\left(\varphi\left(\left(1,0\right),1\right)\right)^{2} =
\left(\left(\left(x_{1}+x_{3}\right)\left(e^{dz_{3}}+1\right),\left(y_{1}+y_{3}\right)
\left(1+e^{z_{3}}\right)\right),2z_{3}\right),\]
from which we obtain
\begin{eqnarray}\label{Ec:1.1} \left(1+e^{c}\right)x_{1}  =
\left(e^{dz_{3}}+1\right)x_{1}
\end{eqnarray}
and
\begin{eqnarray}\label{Ec:1.2}
\left(1+e^{c}\right)y_{1} = \left(e^{z_{3}}+1\right)y_{1}.
\end{eqnarray}
If $z_3=0$, these force $x_1=y_1=0$, which, together with $z_1=0$, gives
that $\varphi\left(\left(x,y\right),z\right)=\theta_{2}\left(y\right)\theta_{3}\left(z\right)$.
As this implies that $\varphi$ is not injective, we must have that $z_3\not=0$, and, as a
consequence that
\[
    \theta_{3}\left(x\right)=\left(\left(\left(\frac{e^{dz_{3}x}-1}{e^{dz_{3}}-1}\right)x_{3},\left(\frac{e^{z_{3}x}-1}{e^{z_{3}}-1}\right)y_{3}\right),xz_{3}\right).\]

We also have
\[\varphi\left(\left(\left(0,1\right),1\right)^{2}\right) =
\left(\left(\left(1+e\right)x_{2}+\left(e^{dz_{3}}+1\right)x_{3},
\left(1+e\right)y_{2}+\left(e^{z_{3}}+1\right)y_{3}\right),2z_{3}\right)
\]
and
\[\left(\varphi\left(\left(0,1\right),1\right)\right)^{2} =
 \left(\left(\left(x_{2}+x_{3}\right)\left(e^{dz_{3}}+1\right),\left(y_{2}+
 y_{3}\right)\left(1+e^{z_{3}}\right)\right),2z_{3}\right)
 \]
which in turn yield
\begin{eqnarray}\label{Ec:2.1}
\left(1+e\right)x_{2} = \left(e^{dz_{3}}+1\right)x_{2}
\end{eqnarray}
and
\begin{eqnarray}\label{Ec:2.2}
\left(1+e\right)y_{2} = \left(e^{z_{3}}+1\right)y_{2}.
\end{eqnarray}
Suppose now that $z_{3}=1$. Equation \eqref{Ec:1.1}~implies that $x_{1}=0$, and  Equation~\eqref{Ec:1.2} that $c=1$ or $y_{1}=0$. With $y_1=0$ we again find that
$\varphi\left(\left(x,y\right),z\right)=\theta_{2}\left(y\right)\theta_{3}\left(z\right)$ and thus we
are left with $c=1$. Equation \eqref{Ec:2.1} implies that $d=1$ or $x_{2}=0$, and as we have assumed that $c\not=d$ we have $x_{2}=0$. Combining $z_3=1$ with $x_1=x_2=z_1=z_2=0$ yields
 \[
        \varphi\left(\left(x,y\right),z\right)=\left(\left(\left(\frac{e^{dz}-1}{e^{d}-1}\right)x_{3},xy_{1}+yy_{2}+\left(\frac{e^{z}-1}{e-1}\right)y_{3}\right),zz_{3}\right).\]
From this we have $\varphi\left(\left(y_{2},-y_{1}\right),0\right)=0$, which forces $y_{1}=y_{2}=0$. Then
\\ $\varphi\left(\left(x,y\right),z\right)=\theta_{3}\left(z\right)$, which is impossible. Thus $z_3\not=1$.

By Equation \eqref{Ec:2.2}, $y_{2}=0$ and thus $x_{2}\neq0$, as otherwise $\varphi$ cannot be injective. Equation \eqref{Ec:2.1} with $x_{2}\neq0$ implies that $1=dz_{3}$ and hence
by~\eqref{Ec:1.1} that $c=1$ or $x_{1}=0$. With $c=1$ Equation~\eqref{Ec:1.2} gives $y_1=0$ and
hence
\[
   \varphi\left(\left(x,y\right),z\right)=\left(\left(xx_{1}+yx_{2}
   +\left(\frac{e^{dz_3z}-1}{e^{dz_3}-1}\right)x_{3},\left(\frac{e^{z_3z}-1}{e^{z_3}-1}\right)y_{3}\right),
   zz_3\right).\]
Then  $\varphi\left(\left(-x_{2},x_{1}\right),0\right)=\left(\left(0,0\right),0\right)$ and as $x_{2}\neq0$
again we contradict that $\varphi$ is injective. So we are left with the possibility that $c\not=1$ and $x_{1}=0$. For $\varphi$ to be injective we must have $y_1\not=0$. Invoking Equation~\eqref{Ec:1.2}
we obtain $z_3=c$. Combined with $1=dz_{3}$ this yields $c=1/d$, contradicting the hypothesis that $c\not= d,1/d$. So $\varphi$ cannot exist and the proof is complete.
\end{proof}

We now can state and prove the main result in the case that $H=\mathbb{R}\rtimes\mathbb{R}$ and the action of
$H$ on $\mathbb{R}$ is nontrivial.

\begin{theorem}
Let $G$ be an ordered group with o-minimal convex subgroup $N$ such
that $H:=G/N\equiv (\mathbb R\rtimes \mathbb R)$ and the action of
$H$ on $N$ is nontrivial. Then $G$ is isomorphic to either
$G_{3}$ or $\mathbb R^2 \rtimes_{\tau_c} \mathbb R$ for some
$c\in\left[-1,1\right]$, where
$$\tau_{c}\left(z\right)\left(x,y\right)\coloneqq\left[\begin{array}{cc}
e^{cz} & 0\\
0 & e^{z}\end{array}\right]\left(\begin{array}{c}
x\\
y\end{array}\right).$$ There is one definable
isomorphism class of ordered groups for each $c\in\left[-1,1\right]$ and one for
$G_{3}$.

\end{theorem}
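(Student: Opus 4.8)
The plan is to reduce the classification to an extension problem already resolved at the level of abstract groups by the preceding results, and then to extract the finer ordered data from the action rates along the convex chain. First I would apply Theorem~\ref{o-minimal subgroup} to obtain the convex normal o-minimal subgroup $N\cong\mathbb R$, so that $H:=G/N\cong\mathbb R\rtimes\mathbb R$ and, by Lemma~\ref{L:3.4.4.1}, the induced action on $N$ has the form $\gamma((x,y))(a)=e^{cy}a$ for some $c\neq 0$. Pulling back the (unique) convex o-minimal subgroup $N_H$ of $H$ produces a two-dimensional convex normal subgroup $N'\unlhd G$ with $N\subset N'$; since $\gamma$ is trivial on $N_H$, the group $N'$ is a central extension of $\mathbb R$ by $\mathbb R$ and hence $N'\cong\mathbb R^2$ by Lemma~\ref{smaller dimensional groups}(4) (equivalently Theorem~\ref{T:1.3}). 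Thus $G$ is exhibited as an extension $0\to\mathbb R\to G\to\mathbb R\rtimes\mathbb R\to 0$ with action $\gamma$, and by Theorem~\ref{decomposable} its order is the lexicographic order along the chain $N\subset N'\subset G$.

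Next I would settle the classification up to abstract group isomorphism. The cohomology computations made just before Proposition~\ref{T:2.4.2} give $H^{2}(\mathbb R\rtimes\mathbb R,\mathbb R,\gamma)=\{0\}$ whenever $c\neq 1$, while Proposition~\ref{T:2.4.2} gives $H^{2}\cong\mathbb R$ when $c=1$; combined with Theorem~\ref{T:2.4.4} this shows that $G$ is group-isomorphic either to the non-split extension $G_{3}$---which can arise only in the case $c=1$---or to a semidirect product. In the split case Proposition~\ref{P:3.1} rewrites the group as $\mathbb R^{2}\rtimes_{\tau_{c}}\mathbb R$, and Propositions~\ref{P:3.2} and~\ref{P:3.3} identify its abstract isomorphism type with the unordered pair $\{c,1/c\}$; choosing the representative with $|c|\le 1$ yields the range $c\in[-1,1]$ of the statement, with $G_{3}$ isolated as the one remaining (non-split) type.

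The delicate part, and the main obstacle, is to upgrade this to a classification up to definable \emph{ordered} isomorphism. Here I would attach to $G$ the invariant given by the two rates at which $G/N'$ acts by conjugation on $N$ and on $N'/N$: any ordered isomorphism must carry the canonical convex chain $N\subset N'\subset G$ to its counterpart and restrict to an order-preserving---hence positive-scalar---map on each rank-one quotient, so the ratio of these rates is an ordered invariant, and a short computation (after normalizing the rate on $N'/N$ to $1$, as in the Remark following Theorem~\ref{T:1.2}) identifies it with $c$. Passing between the ordered and cohomological pictures via Proposition~\ref{order preserving action} and Theorem~\ref{2-Cohomology}, I would then note that the group isomorphism of Proposition~\ref{P:3.2} interchanges the two coordinate subgroups and therefore cannot be order-preserving; reading off the sign of $c$ (which records whether the two rates agree or oppose) and separating $G_{3}$ as non-split then yields the asserted list. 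The hardest step is proving that this rate-ratio is a \emph{complete} ordered invariant---that no order-preserving isomorphism beyond the scalings above identifies two distinct parameters---since it is precisely this rigidity, reconciled with the group-level identification $c\sim 1/c$, that forces the final enumeration of one ordered class for each admissible $c\in[-1,1]$ together with the single class of $G_{3}$.
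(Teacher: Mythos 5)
Your setup and group-level classification track the paper's own proof closely: you pull back the convex o-minimal subgroup of $H$ to obtain $N'\cong\mathbb R^2$, identify the induced action of $G/N'$ with $\tau_c$ after normalizing, and invoke the cohomology dichotomy ($c\neq1$ versus $c=1$) together with Theorem~\ref{T:2.4.4} and Propositions~\ref{P:3.1}, \ref{P:3.2} and~\ref{P:3.3}, exactly as the paper does. Your rate-ratio argument for distinguishing distinct $c\in[-1,1]$ as \emph{ordered} groups is sound and is in fact a softer alternative to the paper's appeal to the group-level Proposition~\ref{P:3.3}, since an ordered isomorphism preserves the canonical convex chain and induces positive-scalar maps on the rank-one quotients.

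The genuine gap is your treatment of $G_3$: you assert it is ``isolated as the one remaining (non-split) type,'' i.e.\ you infer from non-splitness of the extension that $G_3$ is not isomorphic to any $\mathbb R^{2}\rtimes_{\tau_{c}}\mathbb R$. That inference is invalid: inequivalent extensions routinely have definably isomorphic total groups, and this paper itself is a catalogue of the phenomenon --- the extensions $E_c$ of Section~\ref{HeisSect} are pairwise inequivalent as extensions yet all definably isomorphic to $\mathcal{G}_{\textrm{Heis}}$ (Propositions~\ref{P:EcEd} and~\ref{P:2.4}), and the $T_k$ are pairwise inequivalent yet all definably isomorphic to $G_3$ (Fact~\ref{F:TcTd} and Lemma~\ref{P:2.4.3}). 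A nonzero class in $H^2$ only rules out a definable homomorphic section compatible with the given projection; it says nothing about abstract definable isomorphism of the underlying groups. This is precisely why the paper devotes the bulk of its proof to a direct computation: assuming a definable isomorphism $\varphi:\mathbb R^{2}\rtimes_{\tau_{c}}\mathbb R\rightarrow G_{3}$, it matches up the two-dimensional normal subgroups, uses Fact~\ref{P:homen1iguales} and the vanishing of $H^{1}\left(\mathbb{R},\mathbb{R},e^{k_{3}z}\right)$ to force $\varphi$ into an explicit form, and then derives $k_{3}=1$, then $c=1$, and finally $k=0$, a contradiction. Without this step (or a substitute) your classification is incomplete. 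Your idea could be salvaged at the purely ordered level --- an ordered isomorphism preserves the chain $N\subset N'\subset G$ and so would transport a definable homomorphic section, making splitness of the canonical extension an ordered-group invariant --- but you never make that argument explicit, and it would still fall short of the group-level non-isomorphism that the theorem, as proved in the paper, actually establishes.
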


\begin{proof}
Let $N$ be the o-minimal subgroup of $G$ and let $N^*$ be the
pullback of the o-minimal subgroup $N_H$ of $H$ under the quotient
map from $G$ to $H$ Thus $N^*$ is a 2-dimensional ordered convex
subgroup of $G$. Lemma \ref{L:3.4.4.1} implies that $N^*$ is isomorphic to
$\mathbb R^2$ and by construction it is ordered lexicographically by
$N_2/N$ first and then by $N$.

It also shows that, after composing by an (ordered group)
automorphism of $H$ so that the action of $N_H$ on $H/N_H$ is
given by the exponential function,  the action of $G/N^*$ on  $N^*$
is given by
$$\tau_{c}\left(z\right)\left(x,y\right)\coloneqq\left[\begin{array}{cc}
e^{cz} & 0\\
0 & e^{z}\end{array}\right]\left(\begin{array}{c}
x\\
y\end{array}\right)$$ for some $c\in\left[-1,1\right]$.

From Propositions~\ref{P:3.1}, \ref{P:3.2}, and \ref{P:3.3}
it follows that as groups there is one isomorphism class for
every $c\in\left[-1,1\right]$.

To complete the proof of the theorem, we will show that $G_{3}$ and
$\mathbb{R}^{2}\rtimes_{\tau_{c}}\mathbb{R}$ are not definably
isomorphic. For a contradiction, suppose that
$\varphi:\mathbb{R}^{2}\rtimes_{\tau_{c}}\mathbb{R}\rightarrow
G_{3}$ were a definable isomorphism of groups.

Since the only proper nontrivial normal subgroups of
$\mathbb{R}^{2}\rtimes_{\tau_{c}}\mathbb{R}$ and $G_{3}$ are, respectively,
$\mathbb{R}^{2}\times\left\{ 0\right\}$ and $\mathbb{R}\times\left( \mathbb{R}\times\left\{0\right\}\right)$, the restriction of
$\varphi$ to $\mathbb{R}^{2}\times\left\{ 0\right\}$ must be an isomorphism between these normal subgroups. Then,
for all $x,y\in\mathbb{R}$ we have that
$$\varphi\left(\left(x,y\right),0\right)=\left(\left(ax+by,ky\right),0\right)$$
for some $a,b,k\in\mathbb{R}$ such that $ak\neq0$.

The restriction of $\varphi$ to $\left\{ \left(0,0\right)\right\}
\times\mathbb{R}$ also is a definable group isomorphism. Hence there are definable $\theta_1$, $\theta_2$, and $\theta_3$
such that
$\varphi\left(\left(0,0\right),z\right)=\left(\theta_{1}\left(z\right),\theta_{2}\left(z\right),\theta_{3}\left(z\right)\right)$
for all $z\in \mathbb R$.
Since
\begin{eqnarray*}
& & \left(\theta_{1}\left(z+z^{\prime}\right),\left(\theta_{2}\left(z+z^{\prime}\right)\right),\theta_{3}\left(z+z^{\prime}\right)\right)\\
& = & \left(\theta_{1}\left(z\right),\left(\theta_{2}\left(z\right)\right) , \theta_{3}\left(z\right)\right)  \cdot
\left(\theta_{1}\left(z^{\prime}\right),\left(\theta_{2}\left(z^{\prime}\right)\right),\theta_{3}\left(z^{\prime}\right)\right)\\
& = & \left(\theta_{1}\left(z\right)+e^{\theta_{3}\left(z\right)}\theta_{1}\left(z^{\prime}\right)+\theta_{3}\left(z\right)\theta_{2}\left(z^{\prime}\right)e^{\theta_{3}\left(z\right)},\right.\\
& & \;\left.\left(\theta_{2}\left(z\right)+\theta_{2}\left(z^{\prime}\right)e^{\theta_{3}\left(z\right)},\theta_{3}\left(z\right)+\theta_{3}\left(z^{\prime}\right)\right)\right)
\end{eqnarray*}
we immediately see that there is some $k_{3}\in\mathbb{R}$ for which $\theta_{3}\left(z\right)=k_{3}z$ for all
$z\in\mathbb{R}$. We next observe that $\theta_{2}$ is a `crossed homomorphism', and as
$H^{1}\left(\mathbb{R},\mathbb{R},e^{k_{3}z}\right)=\left\{ 0\right\} $, an easy calculation yields  that
there is some $k_{2}\in\mathbb{R}$ such that
$\theta_{2}\left(z\right)=k_{2}\left(e^{k_{3}z}-1\right)$ for all $z\in\mathbb{R}$.

In $\mathbb{R}^{2}\rtimes_{\tau_{c}}\mathbb{R}$ we have
$\left(\left(x,y\right),0\right)\cdot \left(\left(0,0\right),z\right)=\left(\left(x,y\right),z\right)$ and so

\begin{eqnarray*}
\varphi\left(\left(\left(x,y\right),z\right)\right)
& =& \varphi\left(\left(\left(x,y\right),0\right)\right)\cdot \varphi\left(\left(\left(0,0\right),z\right)\right)\\
& = & \left(\left(ax+by,ky\right),0\right)\cdot \left(\left(\theta_{1}\left(z\right),k_{2}\left(e^{k_{3}z}-1\right)\right),k_{3}z\right) \\
& = & \left(\left(ax+by+\theta_{1}\left(z\right),ky+k_{2}\left(e^{k_{3}z}-1\right)\right),k_{3}z\right).
\end{eqnarray*}

Thus

\begin{eqnarray*}
& & \varphi\left(\left(\left(x,y\right),z\right)\right)\cdot
\varphi\left(\left(\left(x^{\prime},y^{\prime}\right),z^{\prime}\right)\right)\\
& =& \left.\left(ax+by+\theta_{1}\left(z\right)+e^{k_{3}z}\left(ax^{\prime}+by^{\prime}+\theta_{1}\left(z^{\prime}\right)\right)
+k_{3}ze^{k_{3}z}\left(ky^{\prime}+k_{2}\left(e^{k_{3}z^{\prime}}-1\right)\right),\right.\right. \\
& & \left. \left(ky+k_{2}\left(e^{k_{3}z}-1\right)+e^{k_{3}z}\left(ky^{\prime}+k_{2}\left(e^{k_{3}z^{\prime}}-1\right)\right),k_{3}\left(z+z^{\prime}\right)\right)\right)
\end{eqnarray*}

and

\begin{eqnarray*}
\varphi\left(\left(\left(x,y\right),z\right)\cdot \left(\left(x^{\prime},y^{\prime}\right),z^{\prime}\right)\right)
& =& \left(a\left(x+x^{\prime}e^{cz}\right)+b\left(y+y^{\prime}e^{z}\right)+\theta_{1}\left(z+z^{\prime}\right),\right.\\
& &\; \left.\left(k\left(y+y^{\prime}e^{z}\right)+k_{2}\left(e^{k_{3}\left(z+z^{\prime}\right)}-1\right),k_{3}\left(z+z^{\prime}\right)\right)\right).
\end{eqnarray*}

Equating the second components yields $ky^{\prime}e^{k_{3}z}=ky^{\prime}e^{z}$; hence as $ak\not=0$ we have $k_{3}=1$.
Equating the first components and putting $z^{\prime}=y^{\prime}=0$ gives $ax^{\prime}e^{cz}=ax^{\prime}e^{cz}$ and thus $c=1$. Then, if $c\neq1$ such
isomorphism can not exist. Next, if we put $z^{\prime}=0$ and from the first components we obtain
$zky^{\prime}e^{z}=0$ for all $z,y^{\prime}\in\mathbb{R}$, which forces $k=0$, a contradiction. The theorem now follows.
\end{proof}

\section{decomposable ordered groups modulo ordered group
isomorphism}\label{sectionOrdered}

In the preceding sections we have analyzed decomposable ordered groups in dimensions~2 and~3 modulo
definable group isomorphism. As noted in the
introduction, even in dimension~2---see~\ref{ordered dimension 2} below---this is not the same as
identifying the classes of decomposable ordered groups modulo definable
\emph{ordered group isomorphism}. We here determine the isomorphism classes taking
into account the ordered group structure. In dimension~3, the analysis follows the case distinctions
of Section~\ref{dimension3}.

Observe that if $G$ is a
definable ordered group and $N$ a normal o-minimal convex subgroup, then $G$
is an extension of $G/N$ by $N$ and the order is given lexicographically. This implies
in particular that if two extensions are isomorphic, then they are isomorphic as ordered groups.

\subsection{Dimension 2}\label{ordered dimension 2}
In dimension two there are three definable ordered groups modulo
order group isomorphism. These are $\mathbb R\ltimes_\gamma \mathbb R$ with
$\gamma=e^x$, and $\gamma=e^{-x}$, and the direct product ($\gamma
=1$), as we now show.

Given any other $\mathbb R\ltimes_\gamma \mathbb R$ where
$\gamma=e^{cx}$, then the group isomorphism $(x,y)\mapsto (|c|x, y)$ is a
definable ordered group isomorphism with one of the three groups above.

It remains to prove that the groups $\mathbb R\ltimes_\gamma \mathbb R$
and $\mathbb R\ltimes_{\gamma'} \mathbb R$, where $\gamma=e^x$ and $\gamma=e^{-x}$,
although isomorphic as definable groups are not isomorphic
as ordered groups. Indeed, for positive $g_1, g_2$ we have
$g_1g_2g_1^{-1}\geq g_2$ in the first case, whereas in the second
case $g_1g_2g_1^{-1}\leq g_2$, and since the groups are non abelian they
are not isomorphic as ordered groups.

\subsection{Extensions of $\mathbb R\times \mathbb R$ by $\mathbb R$ with trivial action.}\label{8.2}

All extensions of $\mathbb R\times \mathbb R$ by $\mathbb R$ with trivial action
are isomorphic as definable extensions---hence as definable ordered groups---to either
$\mathbb R\times \mathbb R\times \mathbb R$ or to one of the groups $E_c$ for $c\in \mathbb R\setminus \{0\}$ as
defined in \ref{HeisSect}. The map $(x,y,z)\mapsto (|c|x, y, z)$ is a
definable order preserving group isomorphism from $E_c$ to
either $E_1$ or $E_{-1}$, depending on whether $c$ is
positive or negative. The following assertion thus completes the classification
of all ordered group isomorphism types that arise in this case.

\begin{claim}
$E_{-1}$ and $E_1$ are not isomorphic as ordered groups.
\end{claim}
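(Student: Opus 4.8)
The plan is to suppose, toward a contradiction, that there is a definable order-preserving isomorphism $\phi\colon E_1 \to E_{-1}$, and to extract a numerical contradiction from the way $\phi$ must act on the commutator subgroup. First I would record that in $E_c$ the commutator of $g=((x_1,y_1),z_1)$ and $h=((x_2,y_2),z_2)$ is $[g,h]=((0,0),\,2c(x_1y_2-x_2y_1))$; this follows by a direct computation, or by noting that the commutator pairing attached to the defining cocycle $f(g,h)=c(x_1y_2-x_2y_1)$ is $f(g,h)-f(h,g)$. In particular the commutator subgroup of $E_c$ equals $N=\{((0,0),t):t\in\mathbb{R}\}$, which is exactly the convex o-minimal normal subgroup (and is also the center). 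Since $N$ is characteristic we have $\phi(N)=N$, and as $\phi$ is order-preserving, $\phi\restriction N$ is an order-preserving automorphism of $(N,+,<)\cong(\mathbb{R},+,<)$, hence multiplication by a scalar $\lambda>0$.

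Next I would analyze the induced map on the quotient. Because $N$ is convex, $\phi$ descends to a definable order-preserving isomorphism $\bar\phi$ of $E_c/N\cong(\mathbb{R}^2,+)$ equipped with its lexicographic order (first by $x$, then by $y$). By Fact~\ref{P:homdefcont} the map $\bar\phi$ is continuous, hence $\mathbb{R}$-linear, so it is given by an invertible real matrix. A short check shows that an invertible matrix preserves the lexicographic positive cone precisely when it is lower triangular with positive diagonal entries: evaluating on the half-lines $\{(1,t):t\in\mathbb{R}\}$ forces the upper-right entry to vanish and the top-left entry to be positive, while evaluating on $(0,1)$ forces the bottom-right entry to be positive. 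Consequently $\det\bar\phi>0$.

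Then comes the decisive step. The commutator depends on $g,h$ only through their classes modulo $N$, via the alternating form $\omega(\bar g,\bar h)=x_1y_2-x_2y_1$, and linearity gives $\omega(\bar\phi\,\bar g,\bar\phi\,\bar h)=(\det\bar\phi)\,\omega(\bar g,\bar h)$. Applying $\phi$ to a commutator computed in $E_1$ and using that $\phi$ is a homomorphism, I would equate $\phi([g,h]_{E_1})=((0,0),\,2\lambda\,\omega(\bar g,\bar h))$ with $[\phi g,\phi h]_{E_{-1}}=((0,0),\,-2(\det\bar\phi)\,\omega(\bar g,\bar h))$. Choosing $g,h$ with $\omega(\bar g,\bar h)\neq 0$ yields $\lambda=-\det\bar\phi$, which is the sought contradiction, since $\lambda>0$ while $\det\bar\phi>0$.

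The computations here are all routine; the one genuinely load-bearing point—and the step I would be most careful about—is the sign bookkeeping that isolates the order as the true obstruction. The two groups are isomorphic as abstract groups by Proposition~\ref{P:2.4}, so the argument must invoke both that $\phi\restriction N$ respects the order (forcing $\lambda>0$) \emph{and} that $\bar\phi$ respects the order (forcing $\det\bar\phi>0$); it is exactly the incompatibility of these two positivity constraints with the opposite signs of the commutator forms of $E_1$ and $E_{-1}$ that rules out an ordered isomorphism.
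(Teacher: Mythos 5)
Your proof is correct, and it isolates the same underlying obstruction as the paper's proof---the sign of the commutator pairing measured against the order---but the two arguments execute this idea quite differently. The paper's proof is nearly computation-free: it takes $g=(a,b,c)$ with $a>0$ (a positive element modulo the unique two-dimensional convex subgroup $N^*$) and $h=(0,k,l)$ with $k>0$ (a positive element of $N^*$ modulo the o-minimal convex subgroup $N$), computes $ghg^{-1}h^{-1}=(0,0,\pm 2ak)$, and observes that this commutator is positive in $E_1$ and negative in $E_{-1}$; since any ordered group isomorphism must carry $N^*$ to $N^*$ and $N$ to $N$ (they are order-characteristic) and preserves both positivity and commutators, this sign discrepancy already finishes the proof. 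In particular the paper needs no linearity of the induced quotient map, no cone analysis, and no determinants, and its argument excludes even abstract (non-definable) ordered isomorphisms, whereas your appeal to Fact~\ref{P:homdefcont} to obtain continuity, and hence linearity, of $\bar\phi$ confines the argument to definable isomorphisms---harmless here, given the paper's standing convention that all homomorphisms are definable, and in any case repairable by decomposing $\bar\phi$ along the convex subgroup $\{0\}\times\mathbb{R}$ of the quotient instead of invoking linearity. What your route buys in exchange is transparency about where order-preservation enters (exactly twice: $\lambda>0$ on the center and $\det\bar\phi>0$ on the quotient) and a clean quantitative form of the obstruction, $\lambda=-\det\bar\phi$; moreover, your observation that an invertible linear map preserving the lexicographic cone must be lower triangular with positive diagonal is precisely the sort of rigidity statement the paper itself uses later, e.g.\ in~\ref{Subsectionfinal}, where ordered isomorphisms are forced to act as coordinate-wise scalings. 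Both proofs are sound; the paper's is the minimal one, yours is the more structural one.
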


\begin{proof} Let $G$ be either $E_1$ or $E_{-1}$ and
let $g:=(a,b,c)$ be any element with $a>0$. (Note that this can be done without
mentioning how the group is representated: just take any representative of a positive
class not equivalent to the identity in $G/N^*$ where $N^*$ is the
unique convex subgroup of dimension 2.)  Next let $h:=(0,k,l)$ be any
element with $k>0$ (that is, any positive element with respect to the order on $G$
in the identity class of $G/N^*$). Easy
computations show that for any such $g$ and $h$, if $G=E_1$ then
\[
ghg^{-1}h^{-1}=(0,0,2ak)>(0,0,0),
\]
and if $G=E_{-1}$, then
\[
ghg^{-1}h^{-1}=(0,0,-2ak)<(0,0,0).\] \end{proof}

Hence in this case, we have three definable ordered groups modulo
definable ordered group isomorphism, namely $\mathbb R^3, E_1$, and $E_{-1}$.

\subsection{Extensions of  $\mathbb R\times \mathbb R$ by $\mathbb R$ with nontrivial action.}\label{8.3}
From the analysis in~\ref{7.2} the situation is as follows.
Put $G:=\{(x,y,z)\mid x,y,z\in \mathbb R\}$ and $N:=\{(0,0,z)\mid z\in \mathbb R\}$.
we have that $G$ is a lexicographically ordered group---ordered first by $x$, second by $y$, and
third by $z$---with an action of $G/N$ on the convex o-minimal subgroup $N$
given by $\gamma_{c,d}(x,y)=e^{cx+dy}$.
For ease of notation, throughout this
subsection we refer to this group as $G_{c,d}$.

The following hold via easy calculations:

\begin{itemize}
\item[a.] If $d>0$ then the map $(x,y,z)\mapsto (x,cx+dy, z)$ is a definable ordered group isomorphism from $G_{c,d}$ onto
$G_{0,1}$;

\item[b.] If $d<0$ then the map $(x,y,z)\mapsto (x,-cx-dy, z)$ is a definable ordered group isomorphism from $G_{c,d}$ onto
$G_{0,-1}$;

\item[c.] If $d=0$ and $c>0$ then the map $(x,y,z)\mapsto (cx, y, z)$ is a definable ordered group isomorphism from $G_{c,d}$ onto $G_{1,0}$;

\item[d.] If $d=0$ and $c<0$ then the map $(x,y,z)\mapsto (|c|x, y, z)$ is a definable ordered group isomorphism from $G_{c,d}$ onto $G_{-1,0}$.
 \end{itemize}

Thus every extension of $\mathbb R\times \mathbb R$ by
$\mathbb R$ with nontrivial action and lexicographically ordered is definably isomorphic
as an ordered group to one of $G_{0,1}$, $G_{0,-1}$, $G_{1,0}$, or $G_{-1,0}$.
To complete the analysis in this case, we assert:

\begin{claim}
The ordered groups $G_{0,1}$, $G_{0,-1}$, $G_{1,0}$, and $G_{-1,0}$ are nonisomorphic (as ordered groups).
\end{claim}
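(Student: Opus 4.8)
The plan is to exploit the \emph{canonical} nature of the convex subgroup filtration of an ordered group and to read off from it conjugation invariants that no ordered isomorphism can destroy. Writing $G_{c,d}$ multiplicatively, the operation is $(x_1,y_1,z_1)\cdot(x_2,y_2,z_2)=(x_1+x_2,\,y_1+y_2,\,z_1+e^{cx_1+dy_1}z_2)$, so conjugation of $n=(0,0,w)\in N$ by a general $g=(x,y,z)$ satisfies $gng^{-1}=(0,0,e^{cx+dy}w)$. Since the convex subgroups of any linearly ordered group form a chain under inclusion, and since each successive quotient here is $(\mathbb R,+)$---which has no proper nontrivial convex subgroup---the only convex subgroups of $G_{c,d}$ are $\{e\}$, $N=\{(0,0,z)\}$, $N^\ast:=\{(0,y,z)\}$, and $G$. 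Hence any ordered isomorphism $\varphi\colon G_{c,d}\to G_{c',d'}$ must carry $N$ to $N'$ and $N^\ast$ to $(N^\ast)'$, and on each one-dimensional quotient it induces an order isomorphism of $(\mathbb R,+)$, that is, multiplication by a positive scalar.

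First I would split the four groups into two pairs according to whether $N^\ast$ is abelian. Restricting the operation to $N^\ast$ gives $(0,y_1,z_1)\cdot(0,y_2,z_2)=(0,y_1+y_2,z_1+e^{dy_1}z_2)$, so $N^\ast$ is abelian precisely when $d=0$. Thus $N^\ast$ is abelian in $G_{1,0}$ and $G_{-1,0}$ but nonabelian in $G_{0,1}$ and $G_{0,-1}$. As $N^\ast$ is preserved by every ordered isomorphism and abelianness is an isomorphism invariant, no group in $\{G_{1,0},G_{-1,0}\}$ can be ordered-isomorphic to one in $\{G_{0,1},G_{0,-1}\}$.

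Within each pair I would use a sign-of-conjugation invariant built only from the canonical data $N$, $N^\ast$, the order, and the operation. For $\{G_{1,0},G_{-1,0}\}$ the relevant statement is $P_1$: \emph{there exist $g>e$ and $n\in N$ with $n>e$ such that $gng^{-1}>n$}. Because a positive $g=(x,y,z)$ has $x\ge 0$ and conjugation scales $N$ by $e^{\pm x}$, one checks that $P_1$ holds in $G_{1,0}$ (take $x>0$) but fails in $G_{-1,0}$, where $e^{-x}\le 1$ forces $gng^{-1}\le n$. For $\{G_{0,1},G_{0,-1}\}$ the naive analogue of $P_1$ is useless: with $c=0$ and $d=\pm1$, a positive $g$ with $x>0$ leaves $y$ unconstrained, so the factor $e^{dy}$ can be made $>1$ in \emph{both} groups. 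The remedy, and the crux of the argument, is to confine the conjugating element to $N^\ast$, via $P_2$: \emph{there exist $h\in N^\ast$ with $h>e$ and $n\in N$ with $n>e$ such that $hnh^{-1}>n$}. A positive $h=(0,y,z)\in N^\ast$ has $y\ge 0$ and acts on $N$ by $e^{\pm y}$, so $P_2$ holds in $G_{0,1}$ and fails in $G_{0,-1}$.

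Since $P_1$ and $P_2$ refer only to the order, the group operation, and the canonical subgroups $N$ and $N^\ast$---all respected by any ordered isomorphism---their truth values are isomorphism invariants, which separates all four groups. The one genuinely delicate point is the nonabelian pair: there both ``coordinate directions'' of the action contribute to conjugation from within $G$, and the invariant becomes decisive only once the conjugator is restricted to the canonical two-dimensional subgroup $N^\ast$, isolating the single direction whose sign is pinned down by positivity.
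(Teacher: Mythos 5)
Your proof is correct, and it follows the same overall strategy as the paper: first separate $\{G_{1,0},G_{-1,0}\}$ from $\{G_{0,1},G_{0,-1}\}$ by (non)abelianness of the convex two-dimensional subgroup, then distinguish within each pair by the sign of the conjugation action on the convex o-minimal subgroup. You go beyond the paper in two respects, both worth keeping. First, you make the canonicity explicit by enumerating \emph{all} convex subgroups ($\{e\}\subset N\subset N^{\ast}\subset G$, via the chain property of convex subgroups and the fact that each successive quotient is $(\mathbb{R},+)$), whereas the paper leaves the preservation of $N$ and $N^{\ast}$ under ordered isomorphism implicit. Second, and more substantively, for the nonabelian pair the paper only says ``a similar argument shows\dots''; as you correctly observe, the literal analogue of the $G_{\pm 1,0}$ argument fails there, since a positive conjugator $g=(x,y,z)$ with $x>0$ leaves $y$ unconstrained, so the scaling factor $e^{\pm y}$ can be made greater than $1$ in \emph{both} $G_{0,1}$ and $G_{0,-1}$. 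Your remedy---restricting the conjugator to the canonical subgroup $N^{\ast}$, where positivity pins down $y\geq 0$---is exactly what is needed to turn the paper's sketch into a complete proof, and is the genuine content your writeup adds. (Incidentally, the paper's second displayed inequality for $G_{-1,0}$, written there as $ghg^{-1}\geq h$, is a typo for $ghg^{-1}\leq h$; your version has the signs right.)
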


\begin{proof}
In $G_{1,0}$ and $G_{-1,0}$ there is an abelian convex two dimensional subgroup, which is not the case for
$G_{0,1}$ and $G_{0,-1}$.

To show that $G_{1,0}$ and $G_{-1,0}$ are not isomorphic as ordered groups, the same analysis as in the two dimensional case applies. In $G_{1,0}$ let $h$ be an element in
the convex o-minimal subgroup and let $g$ be a positive element. Then $ghg^{-1}\geq h$. If we take any such two elements $h$ and $g$ in  $G_{-1,0}$ then $ghg^{-1}\geq h$. A similar argument shows that the ordered groups $G_{0,1}$ and $G_{0,-1}$ are not isomorphic as ordered groups.
\end{proof}

Hence $G_{0,1}$, $G_{0,-1}$, $G_{1,0}$, and $G_{-1,0}$ are, modulo ordered group isomorphism, the only ordered groups that decompose as an extension of $\mathbb R\times \mathbb R$ by $\mathbb R$ with nontrivial action.

\subsection{Extensions of  $\mathbb R\ltimes_c \mathbb R$ by $\mathbb R$ with trivial action.}\label{8.4}

From the analysis in \ref{7.3}, we know that every such group extension (ordered lexicographically)
is isomorphic to $\mathbb R\ltimes_c \mathbb R\times \mathbb R$.
The only remaining issue in this case is precisely the same
as in the two dimensional case: there are two
ordered groups modulo definable order isomorphism, depending on whether
the action by a positive element is ``expanding'' ($c=1$) or
``contracting'' ($c=-1$). This completes the analysis in this case.

\smallskip

For extensions of $\mathbb R\ltimes_c \mathbb R$ by $\mathbb R$ with nontrivial action (see~\ref{7.4}), the analysis now divides according to whether or not the extension is trivial.

\subsection{Trivial extensions of  $\mathbb R\ltimes_c \mathbb R$ by $\mathbb R$ with nontrivial action.}\label{Subsectionfinal}

From the results in~\ref{7.4}, we can represent such a group as consisting of all elements $(x,y,z)\in \mathbb R^3$ ordered lexicographically by $x$, $y$ and $z$
in this order and such that for all $g=(x,y,z)$ and
$h=(0,l,m)$ we have $ghg^{-1}=(0,e^{cx}l, e^{dx}m)$ for constants
$c$ and $d$. For ease of notation in this subsection we use  $K_{c,d}$ to refer to this ordered group.
We then let $N:=\{(0,0,z)\mid z\in \mathbb R\}$ and $N':=\{(0,y,z)\mid y,z\in \mathbb R\}$ denote, respectively,
the one and two dimensional convex normal subgroups of $K_{c,d}$.

An easy calculation shows that $(x,y,z)\mapsto ((|c|)x,y,z)$
is a definable order preserving group isomorphism from $K_{c,d}$
to either $K_{1,d/c}$ or $K_{-1,-d/c}$ depending on whether $c$ is
positive or negative.

As in the preceding case one can show that for all $f$ and $f'$ the ordered groups $K_{1,f}$ and
$K_{-1,f'}$ are not isomorphic. Indeed, in $K_{1,f}$ the action of a positive class
of $G/N'$ on a positive class of $N'/N$ is a map that is
greater than the identity map on every element of the domain, whereas for $K_{-1,f'}$ the corresponding map
is less than the identity map for all elements of the domain. By Proposition \ref{P:3.2} we also know that if
$f'\neq 1/f$ then $K_{1,f}$ and $K_{1,f'}$ are not even isomorphic as groups.
Thus it remains to determine whether or not for a given value $f$
the groups $K_{1,f}$ and $K_{1,1/f}$ are isomorphic as definable
ordered groups, and likewise for each pair $K_{-1,f}$ and $K_{-1,1/f}$. We have

\begin{claim}
For $f\not=1$ the groups $K_{1,f}$ and $K_{1,1/f}$ are not isomorphic as ordered groups, and likewise for
$K_{-1,f}$ and $K_{-1,1/f}$.
\end{claim}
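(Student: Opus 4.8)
The plan is to exploit the fact that an \emph{ordered} group isomorphism is forced to respect the canonical chain of convex subgroups, and then to read off a numerical invariant of the conjugation action from the induced maps on the successive quotients. Write $G=K_{1,f}$ and $G'=K_{1,1/f}$, and recall that both carry the tower $\{e\}\unlhd N\unlhd N'\unlhd G$ of convex normal subgroups, where $N=\{(0,0,z)\}$ and $N'=\{(0,y,z)\}$. The first step is to check that these are the \emph{only} proper nontrivial convex subgroups; in particular $N$ is the unique one-dimensional and $N'$ the unique two-dimensional convex subgroup. Indeed, if a convex subgroup contains an element whose $G/N'$-coordinate is nonzero then, by convexity together with closure under the operation, it first swallows all of $N'$ and then all of $G$; similarly an element of $N'$ with nonzero $N'/N$-coordinate forces the subgroup to contain all of $N'$. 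Consequently any ordered group isomorphism $\varphi\colon G\to G'$ must satisfy $\varphi(N)=N$ and $\varphi(N')=N'$.

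Next I would pass to the successive quotients. Each of $N$, $N'/N$ and $G/N'$ is (isomorphic to) $(\mathbb{R},+)$ with its usual order, and $\varphi$ induces on each a definable, order-preserving automorphism; by Fact~\ref{P:homdefcont} such a map is continuous, hence is multiplication by a positive scalar. Denote these scalars by $\lambda_1,\lambda_2,\lambda_3>0$ on $N$, $N'/N$ and $G/N'$ respectively. The key point is that $\varphi$ intertwines the conjugation actions. Fix $x\in\mathbb{R}\cong G/N'$ and a lift $g\in G$; conjugation by $g$ acts on $N'/N$ by multiplication by $e^{x}$ and on $N$ by multiplication by $e^{fx}$ (the exponents $1$ and $f$ are exactly the two indices defining $K_{1,f}$), while in $G'$ the corresponding exponents are $1$ and $1/f$.

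The heart of the argument is then a two-line computation. Intertwining on $N'/N$ gives, for all $x$, the identity $\lambda_2 e^{x}=\lambda_2 e^{\lambda_3 x}$, forcing $\lambda_3=1$ (this uses that the $N'/N$-exponent is $1$ in \emph{both} groups). Feeding $\lambda_3=1$ into the intertwining relation on $N$ yields $\lambda_1 e^{fx}=\lambda_1 e^{(1/f)x}$ for all $x$, hence $f=1/f$. Since $f\neq\pm1$ this is impossible, so no ordered isomorphism $\varphi$ can exist. (When $f=-1$ one has $1/f=f$ and the two groups literally coincide, so there is nothing to prove; the real content is the case $f\neq\pm1$.) The argument for $K_{-1,f}$ versus $K_{-1,1/f}$ is identical: the shared $N'/N$-exponent is now $-1$, which still forces $\lambda_3=1$, and the $N$-exponents $f$ and $1/f$ again force $f=1/f$.

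I expect the only genuinely delicate step to be the first one, namely verifying that the convex-subgroup chain is canonical and that $\varphi$ therefore descends to the three quotients; everything afterwards is the routine exponential bookkeeping already used in Lemma~\ref{P:3.3} and subsection~\ref{8.3}. Note that I never invoke the group-level isomorphism of Proposition~\ref{P:3.2}: the point of the claim is precisely that that isomorphism swaps $N$ with a non-convex complement and so cannot be repaired into an order isomorphism, and it is exactly the invariant $f$ (versus $1/f$) that detects this failure.
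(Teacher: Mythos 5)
Your proof is correct and takes essentially the same route as the paper's: both arguments pin down the convex normal chain $N\unlhd N'\unlhd G$ under any ordered isomorphism, reduce the induced maps to scalar multiplications, and then derive the impossible identity $f=1/f$ by intertwining the conjugation actions (the shared exponent on $N'/N$ forcing the scalar on $G/N'$ to be $1$). The only differences are organizational: you work directly with the three one-dimensional quotients $N$, $N'/N$, $G/N'$, which lets you bypass the paper's eigenspace argument for diagonality of the map on $N'$, and you correctly flag the degenerate case $f=-1$, where the two groups coincide and the claim as literally stated needs the reading $f\neq\pm 1$.
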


\begin{proof}

An ordered group isomorphism must in particular induce isomorphisms
between the two convex normal subgroups of dimension~2, which are isomorphic to
$\mathbb R \times \mathbb R$, and also between the quotient of the
groups by these subgroups, isomorphisms which must preserve the group action.
Let $\gamma$ be the action in $K_{1,f}$ of the quotient
$\mathbb R_x$ on the dimension two convex normal subgroup $\mathbb R_y\times \mathbb R_z$. Then
for any nonzero element $x\in \mathbb R_x$ the action $\gamma(x)$ is an isomorphism of the vector space
$\mathbb R_y\times \mathbb R_z$ that has $\mathbb R_y\times \{0\}$ and $\{0\}\times \mathbb R_z$ as
its eigenspaces. The same holds for $K_{1,1/f}$.

A group isomorphism must map eigenspaces to eigenspaces,
and in the ordered group case the coordinates cannot be switched since the eigenspace
$\{0\}\times \mathbb R_z$ is the normal convex o-minimal subgroup.
Hence, a definable ordered group isomorphism
between $K_{1,f}$ and $K_{1,1/f}$ must be multiplication by
a constant in each coordinate. It then is easy to check that there is no constant $k$ in the $x$ variable that
can send the group action $e^x$ in the $y$-coordinate to itself and send the group action
$e^{fx}$ to $e^{x/f}$ in the $z$-cooordinate. A similar argument shows that the groups  $K_{-1,f}$ and $K_{-1,1/f}$ with $f\not=1$ are not isomorphic as ordered groups.

\end{proof}

Thus, modulo definable ordered group isomorphism, the trivial extensions of  $\mathbb R\ltimes_c \mathbb R$ by $\mathbb R$ with nontrivial action are the groups $K_{1,f}$ and $K_{-1,f}$ for $f\in \mathbb R\setminus \{0\}$.

We now have one last case.

\subsection{$G_3$.}\label{SubsectionT1}
The only nontrivial extension of  $\mathbb R\ltimes_c \mathbb R$ by $\mathbb R$ that arises comes equipped with the action as described above in~\ref{Subsectionfinal} with $c=d$. As there, the mapping $(x,y,z)\mapsto (|c|x,y,z)$
provides an ordered group isomorphism that allows us to assume that either $c=d=1$ or $c=d=-1$.
We will refer to the respective ordered groups as $T_1=G_{3}$ and $T_{-1}$.

By the same argument given in the proof of Lemma \ref{P:2.4.3}, any nontrivial extension of $\mathbb R\ltimes_c \mathbb R$ by $\mathbb R$ must be (definably) isomorphic as an ordered group to either $G_3=T_1$ or to $T_{-1}$. Moreover, it is evident that these groups are not isomorphic as ordered groups since the action of the quotient of the group by its two dimensional normal convex (abelian) subgroup on this subgroup is increasing in $T_1$ and decreasing $T_{-1}$.

\section{Summary and relations with Real Lie Groups}\label{Summary}

Let $\mathcal R$ be an o-minimal expansion of an ordered field satisfying the hypotheses
$(*)$ and $(**)$, namely that all o-minimal groups
are definably isomorphic and (as stated above for expansions of the real field) that every abelian torsion free two dimensional definable group is isomorphic to the direct product of its additive group with itself.

We here recapitulate the results we have proved, which for concreteness, we state assuming that $\mathcal R$ is an o-minimal expansion of the real field.

\begin{summary}\label{T:3*}
The supersolvable definable groups in $\mathcal R$ that are definably (topologically) homeomorphic to $\mathbb R^3$ are,
modulo definable group isomorphism, $\left(\mathbb{R}^{3},+\right)$ ,
$\left(\mathcal{G}_{\textrm{Heis}},\cdot\right)$,
$\left(\left(\mathbb{R}\rtimes\mathbb{R}\right)\times\mathbb{R},\cdot\right)$, $G_3$,
and $\mathbb{R}^{2}\rtimes_{\tau_{c}}\mathbb{R}$ where
$\tau_{c}\left(z\right)\left(x,y\right)=\left(x{e^{cz}},
y{e^{z}}\right)$, with $c\not=0$, is a nontrivial action  of
$\left(\mathbb{R},+\right)$ on $\left(\mathbb{R}^{2},+\right)$.
\end{summary}

For 2-dimensional ordered groups definable in $\mathcal R$ we have
\begin{summary}\label{2-dimensional}
Let $G$ be a $\mathcal R$-definable 2-dimensional ordered group. Then
$G$ is isomorphic to $\mathbb R^2:=\mathbb R\times \mathbb R$, to
$\mathbb R \rtimes_1 \mathbb R$, or $\mathbb R \rtimes_{-1} \mathbb R$, each
ordered with the reverse lexicographical order (that is, the second coordinate ordered first).
\end{summary}

Before stating our result for ordered groups of dimension~3 definable in $\mathcal R$ we introduce some notation. For
$\mathbb R^3$ written as triples $(x,y,z)$, by $x>>y>>z$ we mean the lexicographic order with the $x$-coordinate first, then the $y$-coordinate, and lastly the $z$-coordinate. Likewise, $y>>x>>z$ is the lexicographic order with the $y$-coordinate first, then the $x$-coordinate, and lastly the $z$-coordinate, and similarly for the other permutations of the variables.

\begin{summary}\label{3-dimensional}
Let $G$ be a $\mathcal R$-definable ordered group of dimension 3.
Then $G$ is definably isomorphic as an ordered group to exactly one of the following:
\begin{itemize}
\item[i.] $\left(\mathbb{R}^{3},+\right)$ with lexicographical order.

\item[ii.]
With the notation from~\ref{8.2}, either $E_1$ or $E_{-1}$, ordered lexicographically (first the $x$-coordinate, then $y$, and then $z$). The groups represented in this class are the nontrivial extensions of $\mathbb R^2$ by
$\mathbb R$ with trivial action.

\item[iii.] $\left(\left(\mathbb{R}\rtimes\mathbb{R}\right)\times\mathbb{R},\cdot\right)$. If the presentation is given by $\{(x,y,z)\mid x,y,z\in \mathbb R^3\}$, then
the order is lexicographic, with either $y>>z>>x$, $z>>y>>x$, or $y>>x>>z$. The first two cases are from~\ref{8.3}; the last from~\ref{8.4}.

\item[iv.] $\mathbb{R}^{2}\rtimes_{\tau}\mathbb{R}$, with universe $\{(x,y,z)\mid x,y,z\in \mathbb R^3\}$ and
$\tau$ given by either $\tau\left(z\right)\left(x,y\right)=\left(e^{z}x,
e^{d}y\right)$ or $\tau\left(z\right)\left(x,y\right)=\left(e^{-z}x,e^{dz}y\right)$ for some $d\in \mathbb R\setminus 0$, ordered by $z>>x>>y$.

\item[v.] $T_{1}$ or $T_{-1}$ ,with the notation used in  \ref{SubsectionT1}, ordered lexicographically.
\end{itemize}
\end{summary}

\bigskip

We now turn to the real Lie group context. It is known that any compact Lie group is isomorphic
to a real algebraic subgroup of $GL(n, \mathbb R)$ (see, \cite{Kn}, e.g.). Such a group thus is interpretable in an o-minimal structure (in fact, definable in the real field) and is therefore analyzable by results in the o-minimal setting. It is also
not difficult to find examples of Lie groups---over either the complex or real field---that are not definable in
an o-minimal structure (e.g., the group of rigid---norm preserving---automorphisms of the plane).
However, our context rules out all of the examples known to us: any real Lie group with an order compatible with the group structure must be topologically homeomorphic to a cartesian product of $\mathbb R$. We do not know
whether or not any ordered real Lie group, where the order is given by smooth functions is definably isomorphic to one of the groups described above. In fact, we do not know whether or not a Lie group with an order that is compatible with the group structure is supersolvable.

We do know the following.

\begin{corollary}\label{algebraicLie}
Let $G$ be a algebraic real Lie ordered group (in the sense that the group operations and the order are definable in the
real ordered field structure, i.e., by polynomials and the natural order on $\mathbb R$) of dimension~2 or~3. Then $G$ is isomorphic (as an ordered Lie group)
to one of the groups described above in \ref{2-dimensional} and \ref{3-dimensional}. Even more, the isomorphism is definable in
$\mathbb R_{\it Pfaff}$, and hence is given by equations that are solutions to a Pfaffian system over the real field (Pfaffian equations). In particular the isomorphism is analytic.
\par

The same is true for any real Lie ordered group of dimension~2 or~3 for which the group operation and the order are definable via  Pfaffian equations.
\par

Conversely, all of the groups described in \ref{2-dimensional} and \ref{3-dimensional} are definable by Pfaffian equations. However, the groups $T_1=G_3$ and 
$T_{-1}$ are not algebraic, so these are Lie groups which are orderable by continuous relations that are not algebraic.
\end{corollary}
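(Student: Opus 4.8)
The plan is to handle the two hypotheses uniformly. An algebraic real Lie ordered group is, by definition, definable in the real ordered field and hence in $\mathbb R_{\it Pfaff}$, while a real Lie ordered group with Pfaffian operations and order is definable in $\mathbb R_{\it Pfaff}$ outright; thus in both cases $G$ is an ordered group definable in $\mathbb R_{\it Pfaff}$, an o-minimal expansion of the real field satisfying $(*)$ and $(**)$. Being an ordered group definable in an o-minimal structure, $G$ is decomposable (its order admits a presentation, by \cite{OnSt}), so Theorem~\ref{decomposable} and Corollary~\ref{solvable groups} apply; when $\dim G\in\{2,3\}$, Summaries~\ref{2-dimensional} and~\ref{3-dimensional} then identify $G$, up to definable ordered-group isomorphism, with exactly one of the groups listed there.

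It remains to strengthen ``definable'' to ``Pfaffian'' and ``analytic''. For this I would revisit the isomorphisms constructed in Sections~\ref{dimension2}--\ref{sectionOrdered}: each is built from coordinatewise scalings $(x,y,z)\mapsto(\lambda x,\mu y,\nu z)$, from the module equivalences of Facts~\ref{P:accionesgrales} and~\ref{P:accionesequi}, and from maps involving the exponential. Since $\exp$ belongs to $\mathbb R_{\it Pfaff}$ and, by the Fact recalled in Section~\ref{subsection3.2}, the relevant isomorphisms between o-minimal groups lie in the Pfaffian closure, each link in the chain is definable in $\mathbb R_{\it Pfaff}$, and hence so is the composite. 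By Fact~\ref{P:homdefcont} such a definable homomorphism is continuous, and as Pfaffian functions are real-analytic the isomorphism is analytic; thus $G$ is isomorphic as an ordered Lie group to one of the listed groups by a map given by Pfaffian equations.

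For the converse, that every group appearing in Summaries~\ref{2-dimensional} and~\ref{3-dimensional} is definable by Pfaffian equations is routine: each group law is polynomial in the coordinates and in $\exp$ (the operations of $\mathcal{G}_{\textrm{Heis}}$ and of the $E_c$ are purely polynomial, while those of $\mathbb R\rtimes\mathbb R$, $\mathbb R^2\rtimes_{\tau_c}\mathbb R$, and the $T_k$ use $\exp$), and each order is lexicographic; as $\exp$ is Pfaffian, all of these groups are definable in $\mathbb R_{\it Pfaff}$.

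The crux, and the step I expect to be the main obstacle, is to show that $T_1=G_3$ and $T_{-1}$ admit no polynomial group law over $\mathbb R$, as this requires passing from the analytic isomorphism type to an algebraic-geometric statement. I would argue intrinsically. In $G_3$ the derived subgroup $A:=[G_3,G_3]$ is the characteristic two-dimensional abelian subgroup $\{(x,(y,0))\}\cong\mathbb R^2$, and the conjugation action of the one-dimensional quotient $G_3/A\cong(\mathbb R,+)$ on $A$ is, in suitable linear coordinates,
\[
\tau(z)=e^{z}\begin{pmatrix}1 & z\\ 0 & 1\end{pmatrix},
\]
with the analogous $e^{-z}$-form for $T_{-1}$. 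If $G_3$ were isomorphic as a Lie group to a real algebraic group, then $A$ would be an algebraic subgroup, $G_3/A$ a one-dimensional algebraic group, and conjugation an algebraic homomorphism, so its image would be Zariski closed of dimension at most $1$; moreover the Lie isomorphism restricts on $A\cong\mathbb R^2$ to a continuous, hence $\mathbb R$-linear, map, so this image is conjugate by a fixed element of $\mathrm{GL}_2$ to the image of $\tau$. But writing $a=e^{z}$ and $b=ze^{z}$ one has $b=a\log a$, a transcendental relation, so $\{\tau(z):z\in\mathbb R\}$ is Zariski dense in the two-dimensional group of matrices $\left(\begin{smallmatrix}a & b\\ 0 & a\end{smallmatrix}\right)$ with $a\neq0$; since conjugation by a fixed linear map preserves Zariski dimension, this contradicts dimension at most $1$. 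The identical computation with $a=e^{-z}$ rules out $T_{-1}$. Hence $T_1$ and $T_{-1}$, although Pfaffian-definable and carrying the continuous lexicographic order, are not algebraic, providing the asserted Lie groups orderable by continuous but non-algebraic relations.
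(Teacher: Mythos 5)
The paper states this corollary \emph{without proof}: it is offered as a direct consequence of Summaries~\ref{2-dimensional} and~\ref{3-dimensional}, together with the standing (and themselves unproved) assumptions that $\mathbb R_{\it Pfaff}$ satisfies $(*)$ and $(**)$ and that definable ordered groups in o-minimal structures are decomposable. So there is no written argument to compare yours against, and in particular the non-algebraicity of $T_{1}$ and $T_{-1}$---the only assertion in the corollary that does not follow formally from the earlier sections---is left entirely to the reader. Your proposal is correct and fills exactly this gap. The reduction in your first paragraph is the one the paper intends: both hypotheses place $G$ inside $\mathbb R_{\it Pfaff}$, where the Summaries apply and produce a definable ordered-group isomorphism. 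Your transcendence argument for $T_{\pm 1}$ is also the right one and checks out: the derived group $A=[G_3,G_3]$ is exactly the copy of $\mathbb R^2$ you describe (commutators $[(0,(0,z)),(x,(y,0))]=(x(e^z-1)+yze^z,(y(e^z-1),0))$ sweep out $A$, and $G_3/A$ is abelian), the conjugation representation of $G_3/A\cong\mathbb R$ on $A$ is $\tau(z)=e^z\left(\begin{smallmatrix}1&z\\0&1\end{smallmatrix}\right)$ (directly, $(0,(0,z))\cdot(x,(y,0))\cdot(0,(0,z))^{-1}=(e^zx+ze^zy,(e^zy,0))$), and the algebraic independence of $e^z$ and $ze^z$ forces the Zariski closure of $\tau(\mathbb R)$ to be two-dimensional, which is incompatible with $\tau(\mathbb R)$ lying, up to a fixed linear conjugation, inside the image of an algebraic representation that kills the two-dimensional derived group.

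Two repairs to details of your write-up. First, your justification of analyticity (``Pfaffian functions are real-analytic'') is off: maps definable in $\mathbb R_{\it Pfaff}$ need not be analytic (already $|x|$ is semialgebraic); the correct route is that the definable isomorphism is continuous by Fact~\ref{P:homdefcont}, and a continuous homomorphism of Lie groups is automatically real-analytic. Second, over $\mathbb R$ the image of the real points under an algebraic homomorphism need not be Zariski closed (squaring on the multiplicative group has image $\mathbb R^{>0}$), so rather than saying the image of conjugation ``would be Zariski closed of dimension at most $1$,'' you should say: the conjugation representation of the ambient algebraic group $\mathbf G$ on its (unipotent, commutative, two-dimensional) derived subgroup factors through the one-dimensional quotient $\mathbf G/[\mathbf G,\mathbf G]$, hence has algebraic image $\mathbf H\leq \mathrm{GL}_2$ of dimension at most $1$; the transported copy of $\tau(\mathbb R)$ lies in $\mathbf H(\mathbb R)$ after conjugating by the fixed linear map $\varphi|_A$, so its Zariski closure has dimension at most $1$---the same contradiction. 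Finally, note that, exactly like the paper, you assume without argument that $(**)$ holds in $\mathbb R_{\it Pfaff}$ and that definable ordered groups there fall under the decomposable framework; these are the paper's own standing assumptions, so this is not a gap relative to the paper, but it is worth flagging that neither you nor the paper proves them.
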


We believe that something along the lines of Corollary~\ref{algebraicLie} should be true for all Lie ordered groups, since refuting this would require adding a Lie (ordered) group structure that violates o-minimality of the real field, without having the universe be discrete (since an ordered Lie group must be topologically homeomorphic to $\mathbb R^n$ for some $n$). While this seems unlikely, we do not know how to continue.

\bibliographystyle{abbrv}
\bibliography{decomposable}

\end{document}